\title[Constant Complexity Growth]{On the Number of Ergodic Measures for Minimal Shifts with Eventually Constant Complexity Growth}
\author[Damron]{Michael Damron}\email{mdamron6@gatech.edu}\thanks{The research of M. D. is supported by NSF grant DMS-0901534} 
\author[Fickenscher]{Jon Fickenscher}\email{jonfick@princeton.edu}
\date{\today}
\newcommand{\AAA}{\mathcal{A}}
\newcommand{\CCC}{\mathcal{C}}
\newcommand{\DDD}{\mathcal{D}}
\newcommand{\EEE}{\mathcal{E}}
\newcommand{\JJJ}{\mathcal{J}}
\newcommand{\KKK}{\mathcal{K}}
\newcommand{\LLL}{\mathcal{L}}
\newcommand{\MMM}{\mathcal{M}}
\newcommand{\SSS}{\mathcal{S}}
\newcommand{\WWW}{\mathcal{W}}
\newcommand{\CC}{\mathbb{C}}
\newcommand{\NN}{\mathbb{N}}
\newcommand{\RHScase}[1]{\left\{\begin{array}{ll} #1 \end{array}\right.}
\newcommand{\NIL}{\mathbf{0}}
\newcommand{\MEASURES}{\MMM}
\newcommand{\ERGODIC}{\EEE}
\newcommand{\SPACE}{\Omega}
\newcommand{\ERGODICsp}{\ERGODIC(\SPACE)}
\newcommand{\MEASURESsp}{\MEASURES(\SPACE)}
\newcommand{\LANGUAGE}{\LLL}
\newcommand{\LANGUAGEl}{\LANGUAGE_\ell}
\newcommand{\LANGUAGEr}{\LANGUAGE_r}
\newcommand{\LANGUAGEs}{\LANGUAGE_\mathfrak{s}}
\newcommand{\ALPHABET}{\AAA}
\newcommand{\RGRAPH}{\Gamma}
\newcommand{\COMPLEXITY}{p}
\newcommand{\SPRGRAPH}{\RGRAPH^{\mathrm{Spec}}}
\newcommand{\BASEGRAPH}{\Lambda}
\newcommand{\BASEGRAPHTOWER}{\Lambda^{\mathrm{Tower}}}
\newcommand{\BISPECIAL}{\mathrm{B}^{\mathrm{Sp}}}
\newcommand{\COLOR}{\CCC}
\newcommand{\PRECOLOR}{\mathfrak{C}}
\newcommand{\eps}{\varepsilon}
\newcommand{\LOOPTIMES}{\WWW}
\newcommand{\EMPTYWORD}{\epsilon}
\newcommand{\EXTEND}{\mathrm{Ext}}
\newcommand{\EXTENDr}{\EXTEND_r}
\newcommand{\EXTENDl}{\EXTEND_\ell}
\newcommand{\EXTENDs}{\EXTEND_\mathfrak{s}}
\newcommand{\EXTENDbs}{\EXTEND_{\ell r}}
\newcommand{\DENS}{\DDD}
\newcommand{\DENSL}{\DENS_L}
\newcommand{\DENSLN}{\DENS_{L,N}}
\newcommand{\DENSLMN}{\DENS_{L,MN}}
\newtheorem*{nnthm}{Theorem}
\newtheorem{lem}{Lemma}[section]
\newtheorem{cor}[lem]{Corollary}
\newtheorem{prop}[lem]{Proposition}
\newtheorem{thm}[lem]{Theorem}
\theoremstyle{plain}
\newtheorem{rmk}[lem]{Remark}
\theoremstyle{definition}
\newtheorem{defn}[lem]{Definition}
\newtheorem{assume}[lem]{Assumption}
\begin{document}

      \begin{abstract}
	    In 1985, Boshernitzan showed that a minimal (sub)shift
	    	satisfying a \emph{linear block growth}
	    	condition must have a bounded number of
	    	ergodic probability measures.
	    Recently, this bound was shown to be sharp
	    	through examples constructed by Cyr and Kra.
	    In this paper, we show that under the stronger assumption
	    	of \emph{eventually constant growth},
	    	an improved bound exists.
	    To this end, we introduce \emph{special Rauzy graphs}.
	    Variants of the well-known Rauzy graphs from symbolic dynamics,
		  these graphs provide an explicit description
		  of how a Rauzy graph for words of length $n$
		   relates to the one for words of length $n+1$
		    for each $n = 1,2,3\dots$.
      \end{abstract}

    \maketitle

    \section{Introduction}

    \subsection{Motivation and Main Result}

    For a finite alphabet $\ALPHABET$ of symbols, the set
	$$\ALPHABET^\NN = \{x = x_1x_2x_3\dots: x_n\in \ALPHABET\mbox{ for all }n\in\NN\}$$
    is endowed with the natural product topology and
	may be realized as a compact metric space.
    In this paper, $\NN = \{1,2,\dots\}$ is the set of positive integers and
	$\NN_0 = \{0\}\cup \NN$.
    The (left) shift $T: \ALPHABET^\NN \to \ALPHABET^\NN$ is defined by
	$$ (Tx)_n = x_{n+1} \mbox{ for all } n\in \NN$$
     and is continuous.
    A \emph{shift}\footnote{Many texts call $\ALPHABET^\NN$ a \emph{shift} and regard what we define here as a \emph{subshift}.
	We follow the convention of calling these objects \emph{full shifts} and \emph{shifts} respectively.}
      $\SPACE\subseteq \ALPHABET^\NN$ is any
      closed and $T$-invariant subset of $\ALPHABET^\NN$.
    We will restrict our discussion to \emph{minimal} shifts, meaning that every $T$-orbit is dense in $\SPACE$,
	or equivalently that there are no non-trivial shifts $\SPACE' \subsetneq \SPACE$.

    The set $\ALPHABET^* = \bigcup_{n\in \NN_0} \ALPHABET^n$ is the collection of all finite \emph{words} on $\ALPHABET$,
	including the empty word $\EMPTYWORD$.
    The \emph{language} of a shift $\SPACE$ is the collection of all words that occur in any $x\in \SPACE$, or
      $$ \LANGUAGE_{\SPACE} = \{w\in \ALPHABET^*: x_{[j,j+|w|-1]} = w \mbox{ for some } j\in \NN \mbox{ and }x\in \SPACE\}.$$
    Here $x_{[i,j]} = x_i x_{i+1} \dots x_{j-1} x_j$ represents the word in $x$ that begins at position $i$ and ends
	  at position $j$ and $|w|$ is the \emph{length} of $w$; that is, $|w|=n$ where $w = w_1w_2\dots w_n$.
    We may then define $\LANGUAGE_{\SPACE}(n)$ for $n\in \NN_0$ as the set of all
	$w\in \LANGUAGE_{\SPACE}$ such that $|w| = n$.

    One object that has been used to describe shifts is the \emph{complexity function} $\COMPLEXITY_{\SPACE}$,
	defined as
	$$ \COMPLEXITY_{\SPACE}(n) = \# \LANGUAGE_{\SPACE}(n).$$
    For example, the Morse-Hedlund Theorem shows that any minimal $\SPACE$ whose complexity function satisfies
	$\COMPLEXITY_{\SPACE}(n_0) \leq n_0$ for some $n_0$ must actually have bounded complexity for all $n$
	and must therefore be a finite and periodic system.
    As a result, $\COMPLEXITY_{\SPACE}(n) \geq n+1$ for all $n$ if $\SPACE$ is aperiodic, 
      and the class of well-studied $\SPACE$ such that equality holds for all $n$ is
      known as \emph{Sturmian}.

    When considering the Borel $\sigma$-algebra for minimal $\SPACE$,
	  the system may be viewed as a measure-theoretic dynamical system.
	Boshernitzan in \cite{cBosh1985} wanted to describe the set of
	$T$-invariant probability measures $\MEASURESsp$ by
	 bounding the size of the set of ergodic measures
	 $\ERGODICsp\subseteq \MEASURESsp$ for $\SPACE$'s such that
	 $\COMPLEXITY_{\SPACE}$ satisfies some linear upper bounds.
    In particular, he showed the following results.

    \begin{nnthm}[Boshernitzan]
	  Let $\SPACE$ be an aperiodic minimal shift on a finite alphabet $\ALPHABET$.
	    \begin{enumerate}\renewcommand{\theenumi}{\roman{enumi}}
	     \item  \cite[Corollary 1.3]{cBosh1985}

		  If $\displaystyle \liminf_{n\to\infty} \frac{\COMPLEXITY_{\SPACE}(n)}{n} = \alpha$, then $\#\ERGODICsp \leq \lfloor \alpha\rfloor$.

	      \item\label{ThmBosh2} \cite[Theorem 1.5 \& Theorem 8.1]{cBosh1985}

		  If $\displaystyle \limsup_{n\to\infty} \frac{\COMPLEXITY_{\SPACE}(n)}{n} <K$ for some integer $K\geq 3$, then $\#\ERGODICsp \leq K-2$.
	      \item\label{ThmBosh3} If $\displaystyle \limsup_{n\to\infty} \frac{\COMPLEXITY_{\SPACE}(n)}{n} = \alpha$ for real $\alpha\geq 2$,
		      then $\#\ERGODICsp \leq \lfloor \alpha\rfloor-1$.
	    \end{enumerate}
    \end{nnthm}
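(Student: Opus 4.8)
The plan is to deduce part (iii) directly from part (ii), which we may assume since it is Boshernitzan's theorem. The only genuine choice in the argument is which integer $K$ to feed into part (ii), and I would take the smallest admissible one, namely $K = \lfloor \alpha \rfloor + 1$. The goal is then to check that this $K$ satisfies the two hypotheses of part (ii) and that the resulting bound $K-2$ is exactly the asserted $\lfloor \alpha \rfloor - 1$.

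First I would verify that $K = \lfloor \alpha \rfloor + 1$ is legal for part (ii). It is an integer by construction, and since the hypothesis gives $\alpha \geq 2$ we have $\lfloor \alpha \rfloor \geq 2$, hence $K \geq 3$, as part (ii) requires. Next I would check the strict-inequality hypothesis: because $\alpha < \lfloor \alpha \rfloor + 1$ holds for every real number $\alpha$, we obtain $\limsup_{n\to\infty} \COMPLEXITY_\SPACE(n)/n = \alpha < K$. With both hypotheses of part (ii) in place, I would simply invoke it to conclude $\#\ERGODICsp \leq K - 2 = (\lfloor \alpha \rfloor + 1) - 2 = \lfloor \alpha \rfloor - 1$, which is the claim.

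I do not expect a real obstacle, since all of the dynamical content is carried by part (ii) and what remains is purely arithmetic bookkeeping about the floor function. The one point that deserves attention is the case in which $\alpha$ is itself an integer. There one cannot take $K = \alpha$, because part (ii) demands the \emph{strict} inequality $\limsup < K$ while here $\limsup = \alpha$; passing instead to $K = \alpha + 1 = \lfloor \alpha \rfloor + 1$ restores strictness at the cost of exactly one unit in the final count. This is precisely why the bound in part (iii) reads $\lfloor \alpha \rfloor - 1$ rather than $\lfloor \alpha \rfloor$, and the same computation (with $K = \lceil \alpha \rceil = \lfloor\alpha\rfloor+1$) covers the non-integer case uniformly, so no separate treatment is needed.
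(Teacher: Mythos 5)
Your argument is correct and is exactly the paper's own justification: immediately after the theorem statement the paper notes that part (iii) follows from part (ii) by choosing $K = \lfloor \alpha \rfloor + 1$, which is precisely your choice, and your verification that $K \geq 3$ and $\alpha < K$ fills in the (routine) details the paper leaves implicit. Since parts (i) and (ii) are cited from Boshernitzan, nothing further is required.
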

    Note that (\ref{ThmBosh3}) is implied by (\ref{ThmBosh2}) by choosing $K = \lfloor \alpha \rfloor +1$.

    For any integer $d\geq 3$, V. Cyr and B. Kra \cite{cCyrKra2015} recently constructed minimal shifts $\SPACE$ such that
      $$ \liminf_{n\to \infty} \frac{\COMPLEXITY_{\SPACE}(n)}{n} = d,~
	    \limsup_{n\to \infty} \frac{\COMPLEXITY_{\SPACE}(n)}{n} = d+1
	    \mbox{ and } \#\ERGODICsp=d.$$
      These demonstrate that Boshernitzan's results are sharp.
    They also strengthened the results in Bohsernitzan's paper by
	allowing non-minimal $\SPACE$ and achieving the same bound
	for the larger class of \emph{generic measures}.
    A measure $\mu$ on $\SPACE$ is generic if there exists $x\in \SPACE$ so that
	      $$ \lim_{N\to\infty} \frac{1}{N} \sum_{n=0}^{N-1} f(T^nx) = \int_{\SPACE} f d\mu$$
    for all continuous $f:\SPACE \to \CC$. Note that any such $f$ is bounded due to compactness of $\Omega$.

    In this paper, we improve Boshernitzan's results under stronger assumptions on $\SPACE$.
    We are motivated by the class of shifts associated to \emph{interval exchange transformations}.
	See \cite{cVia2006} for a survey of these dynamical systems and
	  \cite{cFerenZam2008} regarding their associated shifts.
    The following facts hold in generality\footnote{Meaning for all interval exchange transformations that
		    satisfy the \emph{infinite distinct orbit condition}, a generic condition introduced in \cite{cKea1975}.}:
	    if $\SPACE$ is a shift associated to a minimal interval exchange on $d$ intervals then
	  $$ \COMPLEXITY_{\SPACE}(n) = (d-1)n + 1 \mbox{ for all }n$$
    while $\#\ERGODICsp\leq \lfloor d/2\rfloor$, as proved by \cite{cKatok1973} and later,
    	with a different method,
	by \cite{cVe1990}.
    The $\lfloor d/2\rfloor$ bound was verified to be sharp on $4$ intervals in \cite{cKea1977}
	and then for all $d\geq 4$ in \cite{cYoc2007}.
    For $d\leq 4$, this bound and Boshernitzan's bound agree.
    However, for $d\geq 5$ Boshernitzan's bound, $\#\ERGODICsp \leq d-2$, is strictly weaker.

    We will consider minimal $\SPACE$ whose complexity function satisfies an
      \emph{eventually constant growth condition}: $\COMPLEXITY_{\SPACE}(n+1) - \COMPLEXITY_{\SPACE}(n) = K$ for a fixed $K\in \NN$ and
      all $n\geq n_0$ for some $n_0$.
    Equivalently, $\COMPLEXITY_{\SPACE}(n)$ has eventually constant growth if and only if
	\begin{equation}\label{EqFixedKComp}
	      \COMPLEXITY_{\SPACE}(n) = Kn + C \mbox{ for all }n\geq n_0,
	\end{equation}
    where $K,n_0\in \NN$ and $C \in \NN_0$ are constants.

    We now present our main result.
	  \begin{thm}\label{ThmMain}
		If a minimal shift $\SPACE$ on a finite $\ALPHABET$ satisfies equation \eqref{EqFixedKComp} with $K \geq 4$,
		then $\#\ERGODICsp \leq K-2$.
	  \end{thm}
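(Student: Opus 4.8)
The plan is to translate ergodic measures into nonnegative flows on the Rauzy graphs, recover Boshernitzan's bound $K-1$ almost for free, and then spend the real effort excluding the extremal value $K-1$ by exploiting that the complexity increment is \emph{exactly} $K$ rather than merely bounded by it.

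First I would set up the flow correspondence. For each $n\geq n_0$ let $\RGRAPH_n$ be the Rauzy graph whose vertices are the words of $\LANGUAGE_\SPACE(n)$ and whose edges are the words of $\LANGUAGE_\SPACE(n+1)$, each edge $w$ joining its length-$n$ prefix to its length-$n$ suffix; minimality makes $\RGRAPH_n$ strongly connected. A $T$-invariant measure $\mu$ assigns to every word the mass $\mu([w])$ of its cylinder, and the relations $\mu([w])=\sum_a\mu([wa])=\sum_b\mu([bw])$ present these masses as a nonnegative circulation on $\RGRAPH_n$; conversely a compatible family of circulations determines a unique $\mu$, so that $\MEASURESsp$ is the inverse limit of the circulation simplices and $\ERGODICsp$ is its set of extreme points. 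Since $\#\LANGUAGE_\SPACE(n+1)-\#\LANGUAGE_\SPACE(n)=K$ for $n\geq n_0$, the graph $\RGRAPH_n$ has exactly $K$ more edges than vertices, the space of real circulations has dimension $K+1$, and the total right-branching $\sum_w(\deg^+ w-1)$ and total left-branching are each pinned to $K$ at every level $n\geq n_0$.

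Next I would record that Boshernitzan's third estimate, applied with $\alpha=K\geq 4$, already gives $\#\ERGODICsp\leq K-1$; the entire problem is therefore to rule out $\#\ERGODICsp=K-1$. Here $m$ distinct ergodic measures produce $m$ affinely independent vertices of the circulation simplex and a corresponding family of extreme rays that must persist through the inverse limit, and one wants to bound the number of such persistent rays. The crude dimension count $K+1$ is too lossy for this, so I would instead track the combinatorics of how branching is distributed, using the special Rauzy graphs: contracting every maximal chain of vertices of in- and out-degree one yields a reduced graph $\SPRGRAPH_n$ whose vertices are the left- and right-special words and whose edges are the non-branching paths between them. This reduced graph has size controlled by $K$ and changes only as $n$ passes a bispecial word $\BISPECIAL$, and the explicit rule governing its evolution from $\SPRGRAPH_n$ to $\SPRGRAPH_{n+1}$ is exactly what lets one follow each surviving extreme ray and see which branchings are consumed in separating the ergodic components.

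The crux, and the step I expect to be hardest, is showing that $\#\ERGODICsp=K-1$ is incompatible with the exact constancy in \eqref{EqFixedKComp}. The heuristic is that $K-1$ ergodic measures force the reduced graph to carry $K-1$ essentially independent flow regimes; maintaining this separation while holding the branching budget fixed at $K$ at \emph{every} level would require the bispecial evolution to manufacture, at infinitely many $n$, either an extra branching that momentarily pushes $\COMPLEXITY_\SPACE(n+1)-\COMPLEXITY_\SPACE(n)$ above $K$, or a collapse of a component that forces periodicity and contradicts aperiodicity. Turning this into a rigorous dichotomy --- proving that any configuration realizing $K-1$ ergodic measures must break constancy of the increment or destroy minimality --- is the main obstacle, and it is precisely where the fine bookkeeping of the special Rauzy graphs, rather than the ambient dimension of the circulation space, becomes indispensable.
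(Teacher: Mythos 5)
You have reproduced the paper's skeleton---Rauzy graphs, their reduction to special Rauzy graphs, the bispecial evolution, and the reduction of the whole problem to excluding $\#\ERGODICsp = K-1$ (Boshernitzan already gives $K-1$)---but the step you yourself flag as the crux is the entire content of the theorem, and the dichotomy you propose for it is not how the exclusion works and would fail as stated. Under the hypothesis, configurations carrying $K-1$ ``independent flow regimes'' are perfectly consistent with constant increment $K$ at every level and with minimality: for instance, a cycle of $K$ two-vertex loops can persist through arbitrarily many loop-preserving RBS moves without the increment ever exceeding $K$ and without any collapse toward periodicity. The paper confronts exactly these graphs (the cycle of $K$ loops in Lemma \ref{LemMainManyLoops}, the tower graph $\BASEGRAPHTOWER$ in Lemma \ref{LemMainOneTower}) and rules out $E \in \{K-1, K\}$ not by breaking constancy or minimality, but by a quantitative ergodic-theoretic mechanism your proposal does not contain: Boshernitzan-style \emph{disjoint density} estimates (Lemmas \ref{LemDensMeas} and \ref{LemNbringDens}) showing that if special words generating a measure $\mu$ occur with positive frequency along the generic point of an ergodic $\nu$, then $\nu = \mu$ (Corollary \ref{CorDensMeas}). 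This is what permits \emph{coloring} the vertices of the limiting special graph $\BASEGRAPH$ by ergodic measures, with propagation rules along unique in/out edges (Proposition \ref{PropColorCycles}), and---decisively---the fact that when a colored loop winds only boundedly often, or is destroyed by an RBS/WBS move, its color is forced to spread to the adjacent vertices (Corollary \ref{CorBddLoopCount}). Your circulation/inverse-limit reformulation of $\MEASURESsp$ is correct and recovers the soft dimension bound $K+1$, but it supplies no device for tracking extreme rays through bispecial moves; extreme points of the circulation simplices do not interact with the graph evolution except through exactly the density estimates you omit.

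Concretely, the paper's endgame is pigeonhole among the at most $2K$ special vertices: if some measure colors at least $5$ vertices, or at least $3$ vertices are uncolored, then $E \le K-2$ (Corollary \ref{CorDamronsStar}). Assuming $E = K-1$ forces about $K-3$ color classes to be minimal two-vertex loops; since the winding sets $\LOOPTIMES(n)$ are finite by aperiodicity, one can accelerate (the construction before Corollary \ref{CorAccelLoops}) until some colored loop undergoes a non-loop-preserving move, whereupon its color spreads and one lands in the analyzed special graphs, yielding the contradiction. Two further items are missing from your outline: the proof that simultaneous bispecial moves can be serialized (needed to make the evolution tractable), and the separate treatment of languages failing the binary extension condition, where some special word has three extensions and the binary-loop analysis must be adapted (Lemma \ref{LemMainGenCases}); Proposition \ref{PropMainBin} alone does not cover the general alphabet case. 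As it stands your proposal is a correct framework plus an unproved---and, in the form of the stated dichotomy, false---core step, so the gap is essential.
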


Note that for such a space $\SPACE$, Boshernitzan's result gives $\#\ERGODICsp \leq K-1$. This is also the bound given by Kra and Cyr \cite{cCyrKra2015}, though their results apply to more general systems. So our bound of $K-2$ is a strict improvement over the previous ones for ergodic measures, in our setting of eventually constant growth.

    \subsection{Outline of paper}

	In Section \ref{SecDef}, we establish the notations and definitions used in this paper.
	While most of the ideas presented are well-known, we do introduce
		two concepts vital to our work.
	In Section \ref{SsecSpRG} we define \emph{special Rauzy graphs},
		variants on Rauzy graphs from symbolic dynamics.
	We then define the \emph{binary extension condition} for a
		language/shift in Section \ref{Ssec2EC}.
		
	We define and prove results for a notion of \emph{disjoint density},
		motivated by ideas from \cite{cBosh1985},
		in Section \ref{SecDens}.
	Loosely speaking, a measure $\mu$ has \emph{disjoint density}
		$\beta>0$ in a measure $\nu$ if a fixed sequence of
		words generating $\mu$ occurs with a frequency at least $\beta$
		in a generic sequence $x$ for $\nu$.
	We primarily use disjoint density for ergodic $\nu$,
		and in this case if $\mu$ has positive disjoint density in $\nu$,
		then $\mu = \nu$ (see Corollary \ref{CorDensMeas}).
		
	A \emph{coloring function} on special Rauzy graphs
		is introduced in Section \ref{SecColor}.
	We show that any such coloring function must satisfy
		a set of rules (Proposition \ref{PropColorCycles})
		and the number of colors for such a function
		bounds the number of ergodic measures (Definition \ref{DefColor}).
		
	There is a special Rauzy graph for each $n\in \NN$ and
		the graph for $n$ is related to the graph for $n+1$
		by \emph{bispecial moves}.
	Defined in Section \ref{SecBiSpMoves},
		such moves explicitly describe all possible changes
		as $n$ increases.
	We describe the effects of such moves on coloring functions
		in Lemma~\ref{LemColorMoves} for different graphs
		and end this section by considering \emph{loops}
		in special Rauzy graphs.
	These are pairs of vertices that form a cycle in the
		graph and represent the smallest
		set of vertices that may share a color.
	In our main proofs, we look at such loops to force
		measures (i.e., colors) to ``spread''
		in graphs with too many loops.
	To achieve this, we establish necessary results in Section \ref{SsecColoringLoops}.
	
	The proof of the main theorem is provided in Section \ref{SecMainProof}.
	We first show our result for very specific graphs
		(Lemmas \ref{LemMainOneTower} and \ref{LemMainManyLoops}).
	These graphs are composed of many consecutive loops, allowing
		for freedom in only a few vertices.
	We then provide a proof of our main theorem under the binary extension condition
		as defined in Section \ref{Ssec2EC}.
	The section ends with a proof for all shifts that satisfy equation \eqref{EqFixedKComp}.
	
	We end with Section \ref{SecFuture} by listing further uses for the tools developed in this work.
	In particular, if we make stronger assumptions on our shift then
		we may achieve a better bound for $\#\ERGODICsp$ than in
		Theorem \ref{ThmMain}.
	
    \section{Definitions}\label{SecDef}

	When considering a minimal shift $\SPACE$ on finite alphabet $\ALPHABET$,
	    we will typically suppress the subscript $\SPACE$ when referring to
	    the complexity function $\COMPLEXITY = \COMPLEXITY_{\SPACE}$ and language $\LANGUAGE = \LANGUAGE_{\SPACE}$.

      \subsection{Ergodic Theory}

	  The topology on $\ALPHABET^\NN$, and therefore on any shift $\SPACE\subseteq \ALPHABET^\NN$,
	      is generated by \emph{cylinder sets} $[w]$ for words $w\in \ALPHABET^*$, where
	    $$ [w] = \{x\in \ALPHABET^\NN: x_{[1,n]} = w\},~n = |w|.$$
	  In other words, $[w]$ is the collection of all $x$ such that $x_1\dots x_n = w$.
	  Cylinders are \emph{clopen}; that is, closed and open,
		and the indicator functions $\chi_{[w]}$
		form a countable basis for $\CCC(\AAA^\NN)$,
		the space of continuous functions $\AAA^\NN \to \CC$.
%	  There are countably many such sets and these cylinders generate
%	  	the Borel $\sigma$-algebra on $\ALPHABET^\NN$.
	  The metric
		  $$ d(x,y) = \RHScase{
				  2^{-\min\{n\in \NN: x_n \neq y_n\}}, & x\neq y,\\
				  0, & x = y,
				  }$$
	  also generates the same topology,
	    and it follows that any shift is a compact metric space.
	  Any measure $\mu\in \MEASURESsp$ on a shift $\SPACE\subseteq\ALPHABET^\NN$
	       naturally extends to $\ALPHABET^\NN$ by
	      defining $\mu(\ALPHABET^\NN \setminus \SPACE) = 0$.
	  By the Riesz Representation Theorem,
	  	$\mu\in \MEASURESsp$ is then uniquely determined by the values
		$$\mu([u]) \mbox{ for } u\in \ALPHABET^\NN$$
	  and for minimal $\SPACE$, $\mu([u]) >0$ if and only if $u\in \LANGUAGE$.
	  The reader should refer to Sections 6.1--6.2 in \cite{cWalters2000} for more background on
		invariant measures for compact metric spaces.
	  If $\mu,\mu'\in \MEASURESsp$ and $\beta\in (0,1)$, we say that
	      $$ \mu \geq \beta \mu' $$
	  when $\mu([u]) \geq \beta \mu'([u])$ for all $u\in \ALPHABET^*$.
	  \begin{rmk}\label{RmkErgExtreme}
	  By the extremality of $\ERGODICsp$ in $\MEASURESsp$, for any $\nu\in \ERGODICsp$,
		      $\mu\in \MEASURESsp$ and $\beta\in(0,1)$,
		$ \nu \geq \beta \mu$ implies $\nu = \mu.$
	   \end{rmk}

	  For $u,w\in \ALPHABET^*$, let
	      $$ \#_u(w) = \#\{1\leq j \leq |w|-|u|+1: w_{[j,j+|u|-1]} = u\}$$
	  denote the number of occurrences of $u$ in $w$.
	  If $\nu \in \ERGODICsp$, then $\nu$-almost every $x\in \SPACE$ is
	      \emph{generic} for $\nu$ by Birkhoff's Pointwise Ergodic Theorem, meaning
	      $$ \lim_{n\to \infty} \frac{\#_u(x_{[1,n]})}{n} = \nu([u])\mbox{ for every } u\in \ALPHABET^*.$$
	  \begin{defn}\label{DefFixedGeneric}
		For each $\nu\in \ERGODICsp$, we fix $x^{(\nu)}\in \SPACE$ that is generic for $\nu$.
	  \end{defn}

	  Let $(w^{(n)})_{n\in \NN}$ be a sequence of words $w^{(n)}\in \ALPHABET^*$ such that $|w^{(n)}| \to \infty$
		  as $n\to\infty$.
	  If for each $u\in \ALPHABET^*$ the limit
		    \begin{equation}\label{EqWordsToMeas}
		    		\phi(u) = \lim_{n\to\infty} \frac{\#_u(w^{(n)})}{|w^{(n)}|}
		    \end{equation}
	  exists, then there is a unique $T$-invariant measure $\mu\in \MEASURES(\ALPHABET^\NN)$
		 such that $\mu([u]) = \phi(u)$ for all $u\in \ALPHABET^*$.
	  Furthermore, if $w^{(n)} \in \LANGUAGE$ for all $n$, then $\mu\in \MEASURESsp$.

	  \begin{defn}\label{DefWordsToMeas}
		If equation \eqref{EqWordsToMeas} holds for all $u\in \ALPHABET^*$ for a sequence of words $(w^{(n)})_{n\in \NN}$
		  where $w^{(n)} \in \LANGUAGE$ for all $n$ as above and $\mu$ is the associated measure, we say that $(w^{(n)})_{n\in \NN}$
		    \emph{generates} $\mu$ or $w^{(n)} \to \mu$ as $n\to\infty$.
	  \end{defn}

	  \begin{rmk}\label{RmkWordsToMeas}
		Given a sequence $(w^{(n)})_{n\in \NN}$ in $\LANGUAGE$ such that $|w^{(n)}|\to \infty$,
		       the limit in equation \eqref{EqWordsToMeas} might not exist for all $u \in \LANGUAGE$.
		However, by diagonalization, we may choose a subsequence $(n_k)_{k\in \NN}$ so that
			  $$ \lim_{k\to\infty} \frac{\#_u(w^{(n_k)})}{|w^{(n_k)}|}$$
		exists for all $u\in \ALPHABET^*$.
		In this case, we still obtain $\mu\in \MEASURESsp$ and
		  write $w^{(n)} \to \mu$ for $\JJJ\ni n \to \infty$ where $\JJJ = \{n_k: k\in \NN\}$.
	  \end{rmk}

      \subsection{Special Words}

      For a minimal $\SPACE$, the language $\LANGUAGE$ has the following properties:
	    \begin{itemize}
	      \item $\LANGUAGE$ contains all of its subwords, meaning that if $w\in \LANGUAGE$ then $w_{[i,j]}\in \LANGUAGE$
			for any $i,j$ with $1\leq i \leq j \leq |w|$.
	     \item Every word $w\in\LANGUAGE$ is \emph{extendable}, meaning there exist $a,b\in \ALPHABET$ so that
		      the concatenation $awb$ is an element of $\LANGUAGE$.
	    \end{itemize}
      For any $w\in \LANGUAGE(n)$, we define the \emph{left extensions} and \emph{right extensions} respectively by
		$$ \EXTENDl(w) = \{w'\in \LANGUAGE(n+1): w'_{[2,n+1]} = w\}$$
		and
		$$ \EXTENDr(w) = \{w'\in \LANGUAGE(n+1): w'_{[1,n]} = w\}.$$
      Likewise,
	  let
	      $$ \EXTENDbs(w) = \{w''\in \LANGUAGE(n+2): w''_{[2,n+1]} = w\}$$
	  denote the \emph{two-sided extensions} of $w$.
	Because $\LANGUAGE$ is extendable, these sets are all non-empty for each $w$.
	A word $w\in \LANGUAGE$ is \emph{left special} if $\#\EXTENDl(w) \geq 2$ 
	    and $w$ is \emph{right special} if $\EXTENDr(w) \geq 2$.
	A \emph{bispecial word} is one that is both left and right special.
	Let $\LANGUAGEl$ and $\LANGUAGEr$ denote the
	    left special and right special words in $\LANGUAGE$ respectively.
	For convenience, we will sometimes call $w$ $\mathfrak{s}$-special for
		  $\mathfrak{s}\in \{\ell,r\}$ to
		indicate that $w\in \LANGUAGEs$.

	For any $n\in \NN$ and $\mathfrak{s}\in \{\ell,r\}$,
		the sets $\EXTENDs(w)$, $w\in \LANGUAGE(n)$,
		partition $\LANGUAGE(n+1)$.
	Also, $\#\EXTENDs(w) = 1$ if and only if $w$ is not $\mathfrak{s}$-special.
	Therefore
		  \begin{equation}\label{EqExtToGrowth}
			\sum_{w\in \LANGUAGEs(n)} (\#\EXTENDs(w) - 1)
			  = \sum_{w\in \LANGUAGE(n)} (\#\EXTENDs(w) - 1) = \COMPLEXITY(n+1)-\COMPLEXITY(n),
		  \end{equation}
	where $\LANGUAGEs(n) = \LANGUAGEs \cap \LANGUAGE(n)$.
	We therefore have the following relationships between special words and growth of the
	    complexity function for aperiodic $\SPACE$. First, by using equation \eqref{EqExtToGrowth} and the fact that $\#\EXTENDs(w) \geq 2$ for all $w \in \LANGUAGEs(n)$,
		$$
		      1\leq \#\LANGUAGEs(n) \leq \COMPLEXITY(n+1)-\COMPLEXITY(n)
		      	\mbox{ for all }
		      n\in \NN, \mathfrak{s}\in \{\ell,r\}.
		$$
Furthermore, 
		\begin{equation}\label{Eq2ECComp}
		      \#\LANGUAGEs(n)= \COMPLEXITY(n+1)-\COMPLEXITY(n)
		\end{equation}
	holds for some $n\in \NN$ and $\mathfrak{s} \in \{\ell,r\}$ if and only if $\#\EXTENDs(w) = 2$ for all
		  $w\in \LANGUAGEs(n)$, or equivalently
		  \begin{equation}\label{Eq2EC}
			\max\{\#\EXTENDs(w): w\in \LANGUAGE(n)\} \leq 2.
		  \end{equation}

	\begin{lem}\label{LemDecPsi}
	      For $\mathfrak{s}\in \{\ell,r\}$, let $\psi_\mathfrak{s}:\NN_0 \to \NN$ be
		    defined as
			$$ \psi_\mathfrak{s}(n) = \max\{\#\EXTENDs(w): w\in \LANGUAGE(n)\}.$$
	      The function $\psi_{\mathfrak{s}}$ is non-increasing in $n$ and therefore is eventually
		    constant.
	\end{lem}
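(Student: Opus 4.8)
The plan is to prove the stronger pointwise bound $\psi_\mathfrak{s}(n+1) \leq \psi_\mathfrak{s}(n)$ for every $n$; eventual constancy then follows at once, since a non-increasing sequence of positive integers must stabilize. I would treat the case $\mathfrak{s} = r$ in detail, the case $\mathfrak{s} = \ell$ being entirely symmetric with the roles of the two ends of a word interchanged.

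The key observation is that truncating a word at its far end produces a subword whose right-extension set is at least as large. Concretely, fix $w \in \LANGUAGE(n+1)$ and set $u = w_{[2,n+1]}$, which lies in $\LANGUAGE(n)$ because the language of a minimal shift is closed under passing to subwords. I claim $\#\EXTENDr(w) \leq \#\EXTENDr(u)$. Indeed, $\EXTENDr(w)$ is in bijection with the set of letters $b \in \ALPHABET$ such that $wb \in \LANGUAGE$; for each such $b$ the word $ub$ is a subword of $wb$, hence $ub \in \LANGUAGE$, so $b$ also extends $u$ to the right. This gives an injection from the right-extending letters of $w$ into those of $u$, and the claimed inequality follows.

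With the claim in hand, for every $w \in \LANGUAGE(n+1)$ we have $\#\EXTENDr(w) \leq \#\EXTENDr(w_{[2,n+1]}) \leq \psi_r(n)$, and taking the maximum over $w \in \LANGUAGE(n+1)$ yields $\psi_r(n+1) \leq \psi_r(n)$. The case $\mathfrak{s} = \ell$ is identical after replacing $w_{[2,n+1]}$ by $w_{[1,n]}$ and right extensions by left extensions. Finally, since every word in $\LANGUAGE$ is extendable, $\EXTENDs(w) \neq \emptyset$ and so $\psi_\mathfrak{s}(n) \geq 1$ for all $n$; a non-increasing $\NN$-valued sequence bounded below by $1$ is eventually constant, which completes the proof.

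The argument is essentially bookkeeping rather than a genuine obstacle, so the only point requiring care is performing the truncation on the correct side: for right extensions one drops the first letter (so that the extending letter still sits adjacent to the retained block), while for left extensions one drops the last. Verifying that the truncated word indeed lies in $\LANGUAGE$ relies only on closure under subwords, which is available for minimal shifts.
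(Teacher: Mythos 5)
Your proof is correct and is essentially the paper's argument: both establish $\psi_\mathfrak{s}(n+1)\leq\psi_\mathfrak{s}(n)$ by truncating a word of length $n+1$ on the appropriate side and injecting its extension set into that of the truncation (the paper details the $\mathfrak{s}=\ell$ case via the prefix $w_{[1,n]}$, you detail the mirror-image $\mathfrak{s}=r$ case via the suffix $w_{[2,n+1]}$). Your phrasing of the injection through the extending letters $b$ is the same map as the paper's $w'\mapsto w'_{[1,n+1]}$, since each extension is determined by its single new letter.
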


	\begin{proof}
	      We provide the proof when $\mathfrak{s} = \ell$, as the $\mathfrak{s} = r$ case is similar.
	      Consider any $w\in \LANGUAGE(n+1)$ and its length $n$ prefix $\tilde{w} = w_{[1,n]}$. 
	      We claim that the map
		      $$ w' \mapsto w'_{[1,n+1]}$$
	      is a well-defined injection from $\EXTENDl(w)$ to $\EXTENDl(\tilde{w})$.

	      For any $w'\in \EXTENDl(w)$, $w'_{[2,n+1]} = \tilde{w}$,
			so the image is a left-extension of $\tilde{w}$.
	      Each word in $\EXTENDl(w)$ is distinguished uniquely by its first letter.
	      It follows that $w'_{[1,n+1]} \neq w''_{[1,n+1]}$ for distinct $w',w''\in \EXTENDl(w)$,
		    proving injectivity.
	      Therefore $\psi_\ell(n+1) = \max_{w \in \LANGUAGE(n+1)} \#\EXTENDl(w) \leq \psi_\ell(n)$.
	\end{proof}

	We end this section by relating $\psi_\mathfrak{s}(n)$ as defined above to $\COMPLEXITY(n+1) - \COMPLEXITY(n)$
		    and $\#\LANGUAGEs(n)$.
	For each $w\in \LANGUAGEs(n)$, $\#\EXTENDs(w)\geq 2$ by
		  definition.
	Also, there exists $\tilde{w}\in \LANGUAGEs(n)$ such
		  that $\#\EXTENDs(\tilde{w}) = \psi_\mathfrak{s}(n)$.
	Applying this to equation \eqref{EqExtToGrowth},
		we obtain $(\#\LANGUAGEs(n) - 1) + (\psi_\mathfrak{s}(n)-1) \leq \COMPLEXITY(n+1) - \COMPLEXITY(n)$, or
	  \begin{equation}\label{EqPsiSpecBd}
		  \#\LANGUAGEs(n) + \psi_\mathfrak{s}(n) \leq \COMPLEXITY(n+1)-\COMPLEXITY(n)+2,
	  \end{equation}
	by bounding $\#\EXTENDs(w)$ from below by $2$ for
		all $\mathfrak{s}$-special $w\neq \tilde{w}$.

      \subsection{Special Rauzy Graphs}\label{SsecSpRG}
	  We first recall the definition of the \emph{Rauzy graphs} $\RGRAPH(n)$ for $n\in \NN$ associated to
		a language $\LANGUAGE$.
	  Each $\RGRAPH(n)$ is a directed graph with vertex set $\LANGUAGE(n)$ and a directed edge from $u$ to $v$,
	      written $u\to v$, if and only if there exists $w\in \LANGUAGE(n+1)$ such that $w_{[1,n]}=u$ and $w_{[2,n+1]} = v$.

	  We now define the \emph{special Rauzy graphs} $\SPRGRAPH(n)$.
	  %The \emph{Special Rauzy Graph} $\SPRGRAPH(n)$ will now be defined.
	   If $w\in \LANGUAGE(n)$ is \emph{unispecial} (that is, left special or right special but not both),
	      then $w$ is a vertex in $\SPRGRAPH(n)$.
	   If $w\in \LANGUAGE(n)$ is bispecial, then we associate to it two distinct vertices $w_\ell$ and $w_r$ in $\SPRGRAPH(n)$.
	   An edge from unispecial $w$ to unispecial $w'$ exists, written $w\to w'$, when
		there is a path in $\RGRAPH(n)$ from $w$ to $w'$ that visits only non-special words in between.
	    All paths that end at a bispecial word $w$ in $\RGRAPH(n)$ will have their corresponding edges in $\SPRGRAPH(n)$ end at $w_\ell$,
		while all paths that begin at $w$ in $\RGRAPH(n)$ will have their corresponding edges in $\SPRGRAPH(n)$ begin at $w_r$.
	    We also include the edge $w_\ell\to w_r$ for each bispecial word $w$.
	    The \emph{weight} of edge $w\to w'$ in $\SPRGRAPH(n)$, denoted $\rho_n(w\to w')$, is the length of the corresponding path
		in $\RGRAPH(n)$, with the convention that $\rho_n(w_\ell \to w_r)=0$ for any bispecial $w\in \LANGUAGE(n)$.

	    \begin{defn}
		Given a special Rauzy graph $\SPRGRAPH(n)$, $w\in \SPRGRAPH(n)$ denotes that $w$ is a vertex in the graph
		      while $w\to w'\in \SPRGRAPH(n)$ means that the directed edge from vertex $w$ to vertex $w'$ exists in
		      the graph.
	    \end{defn}

	    We inherit the definitions from the language and refer to a vertex in $\SPRGRAPH(n)$ with more than one outgoing edge
		      as \emph{right special} and
	      a vertex with more than one incoming edge as \emph{left special}.
	    Note that every vertex in $\SPRGRAPH(n)$ is either left or right special but not both.
	    We call an edge $u\to v$ such that $u$ is left special and $v$ is right special a
		  \emph{bispecial edge}.

	    \begin{defn}
		  Given a special Rauzy graph $\SPRGRAPH(n)$ and $\mathfrak{s}\in\{\ell,r\}$ we let
			  $$ K_\mathfrak{s}(n) = \#\LANGUAGEs(n)$$
		   denote the number of $\mathfrak{s}$-special vertices in the graph.
	    \end{defn}

	\subsection{Binary Extension Condition}\label{Ssec2EC}
	The condition used in this paper on $\LANGUAGE$ may now be defined.
	 \begin{defn}\label{Def2EC}
	      A language $\LANGUAGE$ satisfies the \emph{binary $\mathfrak{s}$-extension condition} for $N_0$,
	      		$\mathfrak{s}\in \{\ell,r\}$,
		    if equation \eqref{Eq2EC} holds for all $n\geq N_0$.
	      If $\LANGUAGE$ satisfies both the binary $\ell$-extension condition and the binary
	      	$r$-extension condition for $N_0$,
		    then $\LANGUAGE$ satisfies the \emph{binary extension condition} for $N_0$.
	  \end{defn}
	  \begin{rmk}\label{Rmk2LettersMeans2EC}
		If $\ALPHABET = \{0,1\}$, then $\#\EXTENDl(w) \leq 2$ and $\#\EXTENDr(w)\leq 2$ for all $w$.
		Therefore any $\SPACE$ on $\ALPHABET = \{0,1\}$ will have a language that satisfies
		      the binary extension condition for all $n$.
		The results in this paper that follow will usually assume $\ALPHABET=\{0,1\}$ for convenience
		      but may be extended to any language with the binary extension condition after
		      ignoring finitely many $n$.
	  \end{rmk}

	  If $\SPACE$ on $\ALPHABET$ has language $\LANGUAGE$ that satisfies the binary extension condition for $N_0$
		  and has constant complexity growth $K$ as in equation \eqref{EqFixedKComp} for $n\geq N_0$ as well,
		  then for each special Rauzy graph $\SPRGRAPH(n)$ where $n\geq N_0$,
		$$
			K_\ell(n) = K_r(n) = K,
		$$
	  or each special graph has exactly $2K$ vertices.

	  The following natural consequence of Lemma \ref{LemDecPsi} will help to classify different
		  languages in the proof of Theorem \ref{ThmMain}.
	  Essentially, a language will either satisfy the binary extension condition or
		  will always have at least one $\mathfrak{s}$-special vertex with more than
		  two branches in each special Rauzy graph $\SPRGRAPH(n)$.

	  \begin{cor}\label{Cor2ECPropogates}
		Let $\SPACE$ be a minimal shift on finite $\ALPHABET$ with language $\LANGUAGE$.
		If \eqref{Eq2ECComp} holds for some $N_0$ and $\mathfrak{s}\in \{\ell,r\}$,
		 then $\LANGUAGE$ satisfies the binary $\mathfrak{s}$-extension condition for $N_0$.
	  \end{cor}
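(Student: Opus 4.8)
The plan is to reduce the entire statement to the monotonicity furnished by Lemma \ref{LemDecPsi}, using the pointwise equivalence established immediately before equation \eqref{Eq2ECComp}. First I would recall that, for any fixed $n$ and $\mathfrak{s}\in\{\ell,r\}$, equation \eqref{Eq2ECComp} holds at $n$ if and only if $\#\EXTENDs(w)=2$ for every $w\in\LANGUAGEs(n)$, which is in turn equivalent to \eqref{Eq2EC} at $n$, namely $\psi_\mathfrak{s}(n)\leq 2$. Applying this equivalence to the hypothesis that \eqref{Eq2ECComp} holds at $N_0$ immediately yields $\psi_\mathfrak{s}(N_0)\leq 2$.

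Next I would invoke Lemma \ref{LemDecPsi}, which asserts that $\psi_\mathfrak{s}$ is non-increasing in $n$. Combined with $\psi_\mathfrak{s}(N_0)\leq 2$, this gives $\psi_\mathfrak{s}(n)\leq\psi_\mathfrak{s}(N_0)\leq 2$ for all $n\geq N_0$. Translating back through the same equivalence, \eqref{Eq2EC} holds for every $n\geq N_0$, which is precisely the assertion that $\LANGUAGE$ satisfies the binary $\mathfrak{s}$-extension condition for $N_0$, as defined in Definition \ref{Def2EC}.

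The argument is essentially a one-line consequence of monotonicity, so I do not anticipate a serious obstacle. The only point requiring care is to verify that the equivalence between \eqref{Eq2ECComp} and \eqref{Eq2EC} is genuinely pointwise in $n$ rather than a statement that must hold for all $n$ at once; this follows from equation \eqref{EqExtToGrowth} together with the lower bound $\#\EXTENDs(w)\geq 2$ for each $\mathfrak{s}$-special $w$, which forces every summand in \eqref{EqExtToGrowth} to equal $1$ exactly when \eqref{Eq2ECComp} holds. Once this pointwise equivalence is granted, applying it at $N_0$ and propagating upward via Lemma \ref{LemDecPsi} completes the proof with no further input.
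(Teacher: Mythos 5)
Your proof is correct and is essentially the same as the paper's: both extract $\psi_\mathfrak{s}(N_0)\leq 2$ from the pointwise equivalence of \eqref{Eq2ECComp} with \eqref{Eq2EC}, and then propagate this to all $n\geq N_0$ via the monotonicity of $\psi_\mathfrak{s}$ from Lemma \ref{LemDecPsi}. Your extra care in verifying that the equivalence holds at a single $n$ (forcing every summand in \eqref{EqExtToGrowth} to equal $1$) is exactly the justification the paper supplies in the text preceding \eqref{Eq2ECComp}.
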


	  \begin{proof}
		  If equation \eqref{Eq2ECComp} holds for $N_0$,
			then $\psi_\mathfrak{s}(N_0)=2$,
		  where $\psi_\mathfrak{s}$ is from Lemma \ref{LemDecPsi}.
		  By that lemma, $\psi_\mathfrak{s}(n) \leq 2$ for all $n\geq N_0$.
	  \end{proof}

	  If $\LANGUAGE$ satisfies the binary extension condition, then for all large $n$
		    $$ \#\EXTENDbs(w) \in \{2,3,4\} $$
	  for a bispecial word $w$.
	  We may classify $w$ according to the number of two-way extensions,
	      using the terminology from \cite{cCass1996}.
	    \begin{itemize}
	     \item If $\#\EXTENDbs(w) = 2$, then $w$ is \emph{weak bispecial}.
			In this case, an extension of $w$ on one side uniquely determines the extension on the other.
	      \item If $\#\EXTENDbs(w) = 3$, then $w$ is \emph{regular bispecial}.
			Here exactly one right extension is left special and exactly one left extension is
			    right special.
	      \item If $\#\EXTENDbs(w) = 4$, then $w$ is \emph{strong bispecial}.
			  All one-sided extensions of $w$ are special on the opposite side.
	    \end{itemize}
	  Unless we assume the binary extension condition, bispecial words may not be as easily classified because
		the possible number of two way extensions for a given bispecial word may take on many more values.

    \section{Disjoint Density}\label{SecDens}

    \subsection{Definition}

    Let $w$ be a word on $\ALPHABET$ of length $n$, $L\in \NN$ and $x \in \ALPHABET^\NN$.
    We define $r_{L}(w,x,j)$ for $j\in \NN$ by
	$$
	      r_{L}(w,x,j) = \RHScase{1, & x_{[i,i+n-1]}= w \mbox{ for some }(j-1)Ln< i \leq jLn,\\
				    0, & \mbox{otherwise}.}
	$$
    So $r_L(w,x,j)$ indicates whether or not $w$ \emph{begins} anywhere in the $j^{th}$ block of length $Ln$ in $x$.
      We then define the \emph{sum function} and \emph{average function} as
	  $$
	      \SSS_{L,N}(w,x) = \sum_{j=1}^N r_L(w,x,j) \mbox{ and } \DENSLN(w,x) = \frac{1}{N}\SSS_{L,N}(w,x)
	  $$
      respectively.
    \begin{defn}
	  The \emph{disjoint (upper) density} of $w$ in $x$ by $L$-blocks is
	    $$
		\DENSL(w,x) = \limsup_{N\to \infty} \DENSLN(w,x).
	    $$
    \end{defn}

    \begin{rmk}\label{RmkDensSubseq}
	It is a direct exercise to show that
	      $$
				\DENSL(w,x) = \limsup_{N\to\infty} \DENSLMN(w,x)
	      $$
	for any $M\in \NN$.
    \end{rmk}

    \begin{rmk}
	This concept of density is similar in spirit to that in \cite{cBosh1985}.
	However, we are counting occurrences of $w$ that \emph{begin in} one $Ln$-block, including
	    $w$'s that end in the next block,
	    while the
	    analogous count in Boshernitzan's paper only allows for $w$ that are \emph{contained in} an $Ln$-block.
	This difference will be needed to prove Lemma \ref{LemNbringDens}, but
	    may be regarded as technical on first reading.
    \end{rmk}

    \subsection{Disjoint Density of Measures}\label{SsecDensMeas}

    Consider an infinite $\JJJ\subseteq \NN$ and corresponding sequence of words
	    $(w^{(n)})_{n\in\JJJ}$ where $|w^{(n)}|=n$ for each $n\in \JJJ$.
    Suppose $w^{(n)} \to \mu$ as $\JJJ\ni n \to \infty$ in the sense of
	Definition \ref{DefWordsToMeas} and Remark \ref{RmkWordsToMeas}.
     For $\nu\in \ERGODICsp$, let $x = x^{(\nu)}$ be the fixed generic point for $\nu$ from Definition \ref{DefFixedGeneric}.

    \begin{defn}
	For $\JJJ$, $(w^{(n)})_{n\in \JJJ}$, $\mu$, $x$ and $\nu$ above, the
	      \emph{disjoint (upper) $L$-density} of $\mu$ in $\nu$ is
	  $$
		\DENSL(\mu,\nu) = \limsup_{\JJJ\ni n\to \infty} \DENSL(w^{(n)},x).
	  $$
    \end{defn}

    Up to our change in definition from the original work,
	the proof of the next lemma is the same as for \cite[Lemma 4.5]{cBosh1985}.

    \begin{lem}\label{LemDensMeas}
	If for the notations in this section, $\beta := \DENSL(\mu,\nu) >0$ for some $L\in \NN$ then
		$$
		      \nu \geq \frac{\beta}{2L} \mu.
		$$
    \end{lem}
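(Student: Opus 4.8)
The plan is to reduce the measure inequality $\nu \geq \frac{\beta}{2L}\mu$ to the single-word estimate $\nu([u]) \geq \frac{\beta}{2L}\mu([u])$ for each fixed $u \in \ALPHABET^*$, which is exactly the content of $\nu \geq \frac{\beta}{2L}\mu$. Write $m = |u|$, let $x = x^{(\nu)}$ be the fixed generic point for $\nu$, and fix $n \in \JJJ$ with $n \geq m$ (smaller $n$ give $\#_u(w^{(n)}) = 0$ and are vacuous). Partition $\NN$ into the consecutive length-$Ln$ blocks $B_j = ((j-1)Ln, jLn]$, so that $r_L(w^{(n)},x,j) = 1$ precisely when an occurrence of $w^{(n)}$ begins somewhere in $B_j$, and $\SSS_{L,N}(w^{(n)},x)$ counts how many of the first $N$ blocks are so flagged.

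The geometric heart of the argument is a disjointness observation. If $w^{(n)}$ begins in $B_j$ and also in $B_{j+2}$, the two occurrences occupy disjoint sets of positions: the first ends by position $jLn + n - 1$, while the second starts after $(j+1)Ln \geq jLn + n$ because $L \geq 1$. Hence the occurrences of $w^{(n)}$ beginning in even-indexed flagged blocks are pairwise disjoint, as are those beginning in odd-indexed flagged blocks. I would then count occurrences of $u$: a single occurrence of $w^{(n)}$ inside $x$ contains exactly $\#_u(w^{(n)})$ occurrences of $u$, and disjoint occurrences of $w^{(n)}$ cannot share an occurrence of $u$ (distinct position ranges force distinct start positions). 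Taking whichever of the even/odd families is the larger therefore yields at least $\tfrac12\,\SSS_{L,N}(w^{(n)},x)\cdot\#_u(w^{(n)})$ distinct occurrences of $u$, all lying within $x_{[1,(N+1)Ln]}$ by the same length bound $jLn + n - 1 < (N+1)Ln$.

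Dividing by $(N+1)Ln$ and using $\SSS_{L,N} = N\cdot\DENSLN$, this reads
$$ \frac{\#_u(x_{[1,(N+1)Ln]})}{(N+1)Ln} \;\geq\; \frac{1}{2L}\cdot\frac{N}{N+1}\cdot\DENSLN(w^{(n)},x)\cdot\frac{\#_u(w^{(n)})}{n}. $$
Letting $N\to\infty$, the left side converges to $\nu([u])$ by genericity of $x$, the factor $\frac{N}{N+1}\to 1$, and taking $\limsup_N$ turns $\DENSLN$ into $\DENSL(w^{(n)},x)$, so for every $n\in\JJJ$ we obtain $\nu([u]) \geq \frac{1}{2L}\DENSL(w^{(n)},x)\cdot\frac{\#_u(w^{(n)})}{n}$. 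Finally I would take $\limsup_{\JJJ\ni n\to\infty}$: since $\frac{\#_u(w^{(n)})}{n}\to\mu([u])$ (as $w^{(n)}\to\mu$ with $|w^{(n)}|=n$) and the density factor lies in $[0,1]$, the limsup of the product factors as $\mu([u])\cdot\limsup_n\DENSL(w^{(n)},x) = \mu([u])\cdot\beta$, giving $\nu([u]) \geq \frac{\beta}{2L}\mu([u])$.

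The main obstacle, and the source of the constant $2L$, is the overlap bookkeeping. Occurrences of $w^{(n)}$ beginning in adjacent blocks can genuinely overlap, so summing $\#_u(w^{(n)})$ over all flagged blocks would risk counting occurrences of $u$ twice; the even/odd splitting restores disjointness at the cost of the factor $2$, while the factor $L$ is inherent in the block length. The only other points requiring care are the standard facts that $\limsup$ preserves the inequality through the two limits and that $\limsup(a_nb_n)=a\cdot\limsup b_n$ when $a_n\to a\geq0$ and $b_n$ is bounded, both of which apply since each density term is bounded and the frequency term $\frac{\#_u(w^{(n)})}{n}$ is convergent.
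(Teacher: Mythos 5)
Your proof is correct and follows essentially the same route as the paper's: reduce to the cylinder estimate $\nu([u]) \geq \frac{\beta}{2L}\mu([u])$, count occurrences of $u$ inside occurrences of $w^{(n)}$ flagged in distinct $Ln$-blocks of the generic point, lose a factor of $2$ to adjacent-block overlaps, and pass to the limits via genericity and the definition of $\DENSL(\mu,\nu)$. Your even/odd block splitting and the extended window $x_{[1,(N+1)Ln]}$ are only cosmetic variants of the paper's ``overlap in at most pairs'' count of $\frac{1}{2}(\SSS_{L,N}-1)$ on $x_{[1,NLn]}$, and your limsup bookkeeping replaces the paper's explicit $\eps$-management with the same effect.
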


    \begin{proof}
	It suffices to show that 
	    $$
		\nu([u]) \geq \frac{\beta}{2L} \mu([u])
	    $$
	for an arbitrary fixed $u\in \ALPHABET^*$ such that $\mu([u])>0$.
	Fix $\eps>0$ such that $\eps< \min\{\mu([u]),\beta/2\}$, and choose $M_0$ so that
	      $$
		  \nu([u]) \geq \frac{1}{M} \#_u(x_{[1,M]}) -\eps
	      $$
	for all $M\geq M_0$.
	Choose $n\in \JJJ$ so that
	      $$
		  \frac{\#_u(w^{(n)})}{n} \geq \mu([u]) - \eps.
	      $$
	Finally, choose $N$ so that $NLn > M_0$, $N\eps\geq 1$ and
	      $$
		  \DENSLN(w^{(n)},x) = \frac{1}{N}\SSS_{L,N}(w^{(n)},x) \geq \beta - \eps.
	      $$
	It is possible that $u$ may occur in an overlap of at most two occurrences of $w^{(n)}$ beginning in adjacent
	    $nL$ blocks in $x$.
	Therefore by excluding the possible occurrence of $w^{(n)}$ in the final $nL$ block,
	      $$
		    \#_u(x_{[1,NLn]}) \geq \frac{1}{2}(\SSS_{L,N}(w^{(n)},x)-1) \cdot \#_u(w^{(n)}).
	      $$
	 It follows that
	      $$
		  \begin{array}{rcl}
			\nu([u]) & \geq & \frac{1}{NLn} \#_u(x_{[1,NLn]}) -\eps\\
				  & \geq & \frac{\#_u(w^{(n)})}{2NLn} (\SSS_{L,N}(w^{(n)},x)-1)- \eps\\
				  & \geq & \frac{1}{2L}\frac{\#_u(w^{(n)})}{n}\left(\frac{S_{L,N}(w^{(n)},x)}{N} - \frac{1}{N}\right) - \eps\\
				  & \geq & \frac{1}{2L}\left(\mu([u]) - \eps\right)\left(\beta - 2\eps\right) - \eps.
		  \end{array}
	      $$
	  By letting $\eps \to 0$, we arrive at the desired inequality.
    \end{proof}

    The following is a direct consequence of the previous lemma and Remark \ref{RmkErgExtreme}.

    \begin{cor}\label{CorDensMeas}
	If the conditions of Lemma \ref{LemDensMeas} hold and $\nu$ is ergodic, then $\nu = \mu$.
    \end{cor}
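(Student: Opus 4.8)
The plan is to chain together the two ingredients the corollary advertises: Lemma~\ref{LemDensMeas} and the extremality property recorded in Remark~\ref{RmkErgExtreme}. First I would invoke Lemma~\ref{LemDensMeas} directly: assuming its hypotheses, we have $\beta := \DENSL(\mu,\nu) > 0$ for some $L \in \NN$, and the lemma yields
\[
    \nu \geq \frac{\beta}{2L}\,\mu .
\]
This is the only nontrivial input, and it has already been established, so the remaining work is purely a matter of verifying that the coefficient $\beta/(2L)$ is admissible for the extremality argument.

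The one point that genuinely needs checking is that $\beta/(2L)$ lies strictly in $(0,1)$, since Remark~\ref{RmkErgExtreme} is phrased for coefficients in that range. I would observe that $\DENSLN(w,x) = \frac{1}{N}\SSS_{L,N}(w,x)$ is an average of the $\{0,1\}$-valued indicators $r_L(w,x,j)$, so $\DENSLN(w,x) \in [0,1]$ for every $N$; passing to the $\limsup$ preserves this bound, and hence $\beta = \DENSL(\mu,\nu) \in (0,1]$. Combined with $L \geq 1$, this gives $0 < \beta/(2L) \leq \tfrac{1}{2} < 1$, so the coefficient is indeed in $(0,1)$.

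With that in hand, the conclusion follows immediately: since $\nu$ is ergodic, i.e.\ $\nu \in \ERGODICsp$, and $\nu \geq \frac{\beta}{2L}\mu$ with $\frac{\beta}{2L} \in (0,1)$, Remark~\ref{RmkErgExtreme} forces $\nu = \mu$. I do not expect any real obstacle here; the corollary is essentially a packaging of Lemma~\ref{LemDensMeas} with the extremal-point characterization of ergodic measures, and the only step requiring a moment's care is confirming the normalization $\beta \leq 1$ so that the extremality remark applies verbatim.
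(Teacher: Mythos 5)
Your proposal is correct and matches the paper's argument exactly: the paper derives this corollary as an immediate consequence of Lemma~\ref{LemDensMeas} together with the extremality property in Remark~\ref{RmkErgExtreme}. Your extra check that $\beta/(2L) \in (0,1)$ (via $\DENSLN \in [0,1]$ and $L \geq 1$) is a small, sound verification the paper leaves implicit.
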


    \subsection{Relationships Between Densities}

    In this section, we derive some counting tools to work with densities. The main one is Lemma~\ref{LemNbringDens}, which implies that if a word $u$ appears with a positive frequency in a sequence $x$, and each occurrence of $u$ is associated with an occurrence of a word $w$ (and the distance between $u$ and $w$ is not too large), then $w$ also occurs in $x$ with a positive frequency. We note that the simpler result in Lemma \ref{LemSubwdDens} is a special case of Lemma \ref{LemNbringDens} and can be replaced
	  without ultimately affecting any results in this work.
    However, Lemma \ref{LemSubwdDens} has a better lower bound
	  when it applies.
    Furthermore, the statement and proof of Lemma \ref{LemSubwdDens} are both easier to read
	  and so we include the result to aid in the
	  understanding of the more technical result that follows.

    \begin{lem}\label{LemSubwdDens}
	  If $u$ is a subword of $w$, then
		$$
		      \DENSL(u,x) \geq \frac{|u|}{2|w|} \DENSL(w,x).
		$$
    \end{lem}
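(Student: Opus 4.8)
The plan is to show that every occurrence of $w$ counted by $\SSS_{L,N}(w,x)$ forces a nearby occurrence of $u$, and then to account both for the difference in block sizes (length $L|w|$ for $w$ versus $L|u|$ for $u$) and for the possible clustering of these induced occurrences of $u$. First I would fix the offset of $u$ inside $w$: write $u = w_{[s+1,\,s+|u|]}$ for some $0\le s\le |w|-|u|$. For each block index $j\le N$ with $r_L(w,x,j)=1$, select a position $i_j$ with $(j-1)L|w|<i_j\le jL|w|$ at which $w$ begins in $x$; then $u$ begins at $p_j:=i_j+s$, and $(j-1)L|w|<p_j\le jL|w|+|w|$.

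The key observation, and the source of the constant $\tfrac12$, is that occurrences of $w$ coming from non-adjacent blocks are widely separated: if $j'\ge j+2$ then $i_{j'}>(j+1)L|w|\ge i_j+L|w|$, so $p_{j'}-p_j>L|w|\ge L|u|$. Since any block of length $L|u|$ (the blocks used to compute $\DENSL(u,x)$) contains integers differing by at most $L|u|-1$, no such block can contain two of the $p_j$ whose indices differ by $2$ or more; hence each block of length $L|u|$ contains at most two of the $p_j$. Consequently, writing $N'=\lceil (NL|w|+|w|)/(L|u|)\rceil$ so that every $p_j$ with $j\le N$ begins in one of the first $N'$ blocks of length $L|u|$, each such block satisfies $r_L(u,x,t)=1$, and I obtain
$$\SSS_{L,N'}(u,x)\;\ge\;\tfrac12\,\SSS_{L,N}(w,x).$$

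Finally I would divide by $N'$ and pass to the limit. Since $N'/N\to |w|/|u|$ as $N\to\infty$, the displayed inequality gives $\DENS_{L,N'}(u,x)\ge \tfrac12\,(N/N')\,\DENS_{L,N}(w,x)$. Choosing $N$ along a subsequence realizing $\limsup_N \DENS_{L,N}(w,x)=\DENSL(w,x)$ and taking the limit of the right-hand side yields $\DENSL(u,x)\ge \tfrac{|u|}{2|w|}\DENSL(w,x)$, as desired.

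I expect the main obstacle to be the separation/clustering step: verifying that induced occurrences of $u$ coming from distinct $w$-blocks cannot accumulate in a single block of length $L|u|$ with multiplicity greater than two, which is exactly what prevents a weaker constant. The remaining steps are routine bookkeeping: matching the block count $N'$ to $N$ so that the boundary contribution of size $|w|$ becomes negligible as $N\to\infty$, and observing that a lower bound holding along the subsequence $(N')$ of block counts suffices, since $\DENSL(u,x)$ is a $\limsup$ over all block counts.
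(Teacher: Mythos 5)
Your proposal is correct and follows essentially the same route as the paper's proof: both map each occurrence of $w$ counted in an $L|w|$-block to the induced occurrence of $u$ at a fixed offset inside $w$, and both observe that at most two such induced occurrences can begin in a single $L|u|$-block (occurrences from non-adjacent $L|w|$-blocks are separated by more than $L|w|\geq L|u|$), which is exactly the source of the factor $\tfrac{1}{2}$. The only differences are bookkeeping: the paper synchronizes the two block counts over a common prefix of length $L|u||w|N$ via Remark \ref{RmkDensSubseq} and subtracts $1$ for the truncated final block, whereas you absorb the boundary overshoot into the ceiling $N'$ and pass to a subsequence realizing the limsup.
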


    \begin{proof}
	  Let $\SSS_N = \SSS_{L,N}(w,x)$, $\SSS'_N = \SSS_{L,N}(u,x)$, $m = |u|$ and $n = |w|$.
	  It follows from Remark \ref{RmkDensSubseq} that
	    $$
		  \DENSL(w,x) = \limsup_{N\to\infty} \frac{1}{mN}\SSS_{mN}
		      \mbox{ and }
		  \DENSL(u,x) = \limsup_{N\to\infty} \frac{1}{nN}\SSS'_{nN}.
	    $$
	It then suffices to show that for any $N$,
	    $$
		  \SSS'_{nN} \geq \frac{1}{2}(\SSS_{mN} - 1).
	    $$

	For any $N$, fix the prefix block $y = x_{[1, mnLN]}$ of length $mnLN$ in $x$.
	If $w$ begins at position $p$ in $y$, then $u$ must begin at position $p+q-1$,
	    where $u$ begins at position $q$ in $w$.
	For simplicity, only consider the first occurrence of $u$ in $w$ if necessary.
	If an occurrence of $w$ begins in the last $nL$ block,
	    it is possible that the related occurrence of $u$ does not begin in $y$.
	However, for any other occurrence of $w$, the associated occurrence of $u$
	    must begin in $y$.
	Also, it is possible for $w$ to begin in two consecutive $nL$ blocks in $y$,
	    while their corresponding beginnings of $u$ occur in the same $mL$ block.
	Therefore, it is possible to have at most two occurrences of $w$ in $nL$ blocks
	    produce at least one occurrence of $u$ in a $mL$ block.

	In the prefix block $y$,
	    we are considering $mN$ blocks of size $nL$ and $nN$ blocks of size $mL$.
	We conclude the claim and therefore the proof,
		as $\SSS_{mN}$ counts the $nL$ blocks in which $w$ begins and
		$\SSS'_{nN}$ counts the $mL$ blocks in which $u$ begins.
    \end{proof}

    \begin{lem}\label{LemNbringDens}
	  Let $w,u\in \ALPHABET^*$, $x\in \ALPHABET^\NN$ and $p_0,c\in \NN$.
	  If for every occurrence of $w$  beginning at position $p>p_0$ in $x$ there exists an occurrence of $u$ beginning at position
	      $p'$ in $x$ where $|p-p'|\leq c$, then for any $L\in \NN$
	      $$
		    \DENSL(u,x) \geq C(u,w,c,L) \DENSL(w,x),
	      $$
	  where 
		$$
		    C(u,w,c,L) = \RHScase{\frac{|u|}{3|w| + 3c/L}, & \mbox{if } |u|\leq |w|,\vspace{.05in}\\
				    \frac{L}{3}\frac{1}{3L + 6\frac{c}{|u|}},& \mbox{if } |u|>|w|}.
		$$
    \end{lem}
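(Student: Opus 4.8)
The plan is to mimic the super-block charging scheme of Lemma \ref{LemSubwdDens}, replacing the fixed offset available there (when $u$ is a literal subword of $w$) with the bounded offset $|p-p'|\le c$ supplied by the hypothesis. Write $n=|w|$ and $m=|u|$. By Remark \ref{RmkDensSubseq} we may regroup blocks freely: grouping the $nL$-blocks in batches of $m$ and the $mL$-blocks in batches of $n$ lets us work inside a common prefix of length $mnLN$, obtaining $\DENSL(w,x)=\limsup_{N\to\infty}\frac{1}{mN}\SSS_{L,mN}(w,x)$ and $\DENSL(u,x)=\limsup_{N\to\infty}\frac{1}{nN}\SSS_{L,nN}(u,x)$. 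Fix $N$, set $y=x_{[1,mnLN]}$, and let $A=\SSS_{L,mN}(w,x)$ count the $nL$-blocks of $y$ in which $w$ begins and $B=\SSS_{L,nN}(u,x)$ count the $mL$-blocks of $y$ in which $u$ begins. The goal reduces to an inequality $A\le \kappa B + O(1)$, where the $O(1)$ absorbs the finitely many occurrences of $w$ beginning at positions $p\le p_0$ together with the single incomplete block at the right end of $y$, and $\kappa$ is the charging constant computed below.

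The charging map sends each $nL$-block counted by $A$ to an $mL$-block counted by $B$: if $w$ begins at some position $p>p_0$ inside the block, the hypothesis produces an occurrence of $u$ beginning at $p'$ with $|p-p'|\le c$, and we charge the unique $mL$-block containing $p'$; this block genuinely contains a beginning of $u$, so it is counted by $B$. It remains to bound how many $nL$-blocks can be charged to a single $mL$-block. If an $mL$-block occupies an interval $I$ of length $mL$, then every charging $w$-beginning $p$ satisfies $p\in[\min I-c,\max I+c]$, an interval of length $mL+2c$; the number of $nL$-blocks meeting such an interval is at most $\lceil m/n + 2c/(nL)\rceil + 1$.

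Here the two cases separate. When $m\le n$ we bound $m/n\le 1$, so the charging constant is at most $3+2c/(nL)\le 3+3c/(nL)$, giving $A\le(3+3c/(nL))B+O(1)$; dividing by $mN$, writing $\tfrac{B}{mN}=\tfrac{n}{m}\tfrac{B}{nN}$, and taking $\limsup_{N\to\infty}$ yields $\DENSL(u,x)\ge\frac{m}{3n+3c/L}\DENSL(w,x)$. When $m>n$ we instead retain the factor $m/n>1$ and absorb the additive terms into it: since $m/n+2c/(nL)+2\le 9\tfrac{m}{n}+\tfrac{18c}{nL}=\tfrac{m}{n}\big(9+18c/(mL)\big)$, the same normalization gives $\DENSL(u,x)\ge\frac{1}{9+18c/(mL)}\DENSL(w,x)$, which is exactly $C(u,w,c,L)=\frac{L}{3}\cdot\frac{1}{3L+6c/|u|}$ in the case $|u|>|w|$.

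The routine-but-delicate part, and the main place to be careful, is the bookkeeping of the $O(1)$ terms and the ceiling estimates: one must check that restricting to $p>p_0$ discards only finitely many charging blocks, that the incomplete terminal block of $y$ costs at most a bounded additive error, and that the stated coefficients survive the $\lceil\cdot\rceil+1$ overcount (which is why the clean constants $3c/L$ and $9,18$ replace the sharper values the estimate actually produces). Since all error terms are $O(1)$ in $N$ while $A$ and $B$ are $\Theta(N)$ whenever $\DENSL(w,x)>0$, they vanish in the $\limsup$ and the two displayed inequalities follow.
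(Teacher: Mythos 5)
Your argument is correct and arrives at the exact stated constants, but it does so by a genuinely different counting mechanism than the paper's. The paper makes the association from $w$-occurrences to $u$-occurrences \emph{injective} by subsampling: it splits the $nL$-blocks into $\hat{c} = 3\lceil c/(Ln)\rceil$ residue classes (and, when $|u|>|w|$, further into $\hat{b}\hat{c}$ classes with $\hat{b} = \lceil 2Lm/(Ln(1+2\hat{c}))\rceil$), keeps the best class by pigeonhole, and uses the enforced spacing between selected blocks to guarantee that the charged $u$-beginnings fall in distinct $mL$-blocks, paying the factor $1/\hat{c}$ (respectively $1/(\hat{b}\hat{c})$). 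You instead keep \emph{every} $w$-block and bound the fiber size of the charging map: since a charging position $p$ must lie within $c$ of some $p'$ in the target $mL$-block, the sources lie in an interval of length $mL+2c$, meeting at most $\lceil m/n + 2c/(nL)\rceil + 1$ of the disjoint $nL$-blocks. The quantity $\hat{b}\hat{c}$ in the paper and your multiplicity $\kappa$ play the same role, which is why both routes land on exactly $\frac{|u|}{3|w|+3c/L}$ and, after checking $\frac{1}{9+18c/(|u|L)} = \frac{L}{3}\cdot\frac{1}{3L+6c/|u|}$, on the stated bound for $|u|>|w|$; your normalization via Remark~\ref{RmkDensSubseq} and the passage $A_N \leq \kappa B_N + O(1) \Rightarrow \DENSL(u,x)\geq \frac{m}{\kappa n}\DENSL(w,x)$ is sound, since the $O(1)$ terms vanish under the limsup. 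Your version buys a unified treatment: no two-parameter subsampling is needed in the $|u|>|w|$ case, and the case split enters only at the final numerical simplification ($\kappa \leq 3+3c/(nL)$ when $m\leq n$ versus $\kappa \leq \frac{m}{n}(9+18c/(mL))$ when $m>n$, the latter using $m/n>1$ to absorb the additive $2$); the paper's subsampling buys injectivity outright, at the cost of the pigeonhole step and the $\hat{b},\hat{c}$ bookkeeping. Two small imprecisions in your write-up, neither fatal: the right-edge loss is not a ``single incomplete block'' when $c>mL$ --- the charged $u$-beginning can land up to $\lceil c/(mL)\rceil$ blocks past the prefix $y$, so the spill-over costs up to $\kappa\lceil c/(mL)\rceil$ charges --- but this is still $O(1)$ in $N$ as your framing requires; and the blocks discarded by the restriction $p\leq p_0$ are precisely those whose every $w$-beginning is at most $p_0$, hence among the first $\lceil p_0/(Ln)\rceil$ blocks, again $O(1)$.
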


    \begin{rmk}
	In the case that $c = \alpha |w|$ for a real constant $\alpha$,
	  then when $|u| \leq |w|$,
	    $\DENSL(u,x) \geq C'\frac{|u|}{|w|} \DENSL(w,x)$ for some constant $C' = C'(\alpha, L)$.
	What is more interesting is that $\DENSL(u,x) \geq C'' \DENSL(w,x)$ for $C'' = C''(\alpha,L)$ when
	      $|u|>|w|$, even when $u$ is significantly longer than $w$.
    \end{rmk}

    \begin{proof}[Proof of Lemma \ref{LemNbringDens}]
	Fix $L$, and
	let $\SSS_N = \SSS_{L,N}(w,x)$, $\SSS'_N = \SSS'_{L,N}(u,x)$, $m=|u|$ and $n=|w|$
	 as in the last proof.
	 Let $\hat{c} = 3 \lceil \frac{c}{Ln}\rceil$ and $\hat{L} = \hat{c} mnL$, noting that these are
		  each positive as $c,n,m,L\geq 1$.
	We consider two cases: $m\leq n$ and $m>n$.

	If $m\leq n$, 
	consider the prefix block $y = x_{[1,N\hat{L}]}$.
	For $1\leq d \leq \hat{c}$, let
	      $$
		  \SSS_{\hat{c}mN}(d) = \sum_{\underset{j\equiv d \mathrm{mod} \hat{c}}{j=1}}^{\hat{c}mN} r_{L}(w,x,j),
	      $$
	noting that $\SSS_{\hat{c}mN} = \sum_{d=0}^{\hat{c}-1} \SSS_{\hat{c}mN}(d)$.
	Pick a $d$ such that
	      $$
		    \SSS_{\hat{c}mN}(d) \geq \frac{1}{\hat{c}}\SSS_{\hat{c}mN}.
	      $$
	For the occurrences of $w$ that contribute to $\SSS_{\hat{c}mN}(d)$,
	      at most one may fail to contribute an occurrence of $u$ in an $mL$ block due to
	      truncation\footnote{If $d=\hat{c}$, an occurrence of $w$ in the last $nL$-block may fail to produce an occurrence of $u$ in $y$.}
	      and $\lceil \frac{p_0}{nL\hat{c}}\rceil$ initial occurrences may not have an associated occurrence of $u$.
	Note that this quantity is at least one as $p_0\geq 1$.
	However, by our choices, all other occurrences must uniquely associate to an occurrence of $u$ beginning in a
	      block of length $mL$ in $y$.
	Therefore
	      $$
		   \SSS'_{\hat{c}nN} \geq \SSS_{\hat{c}mN}(d) - \left\lceil \frac{p_0}{\hat{c}nL}\right\rceil- 1
		  		\geq \frac{1}{\hat{c}} \SSS_{\hat{c}mN} - 2 \left\lceil \frac{p_0}{\hat{c}nL}\right\rceil.
	      $$
	In this case, $\DENSL(u,x) \geq \frac{m}{\hat{c}n}\DENSL(w,x)$ by Remark \ref{RmkDensSubseq},
	      as the subtracted term above is constant with respect to $N\to \infty$.
	Furthermore, $\frac{m}{\hat{c}n} > \frac{m}{3n + 3c/L}$.

	If instead $m>n$, let $\hat{b} = \lceil \frac{2Lm}{Ln(1+2\hat{c})}\rceil$,
	  where we leave $L$ in the expression for future calculations.
	We consider prefix word $y$ in $x$ of length $\hat{b}\hat{c}nmLN$ for $N\in \NN$.
	By dividing $\SSS_{\hat{b}\hat{c}mN}$ into $\hat{b}\hat{c}$ sums, we arrive at
	      $$
		      \SSS'_{\hat{b}\hat{c}nN} \geq \frac{1}{\hat{b}\hat{c}} \SSS_{\hat{b}\hat{c}mN}
		      	- 2\left\lceil\frac{p_0}{\hat{b}\hat{c}nL} \right\rceil
	      $$
	through a similar argument to the $m\leq n$ case.
	So $\DENSL(u,x) \geq \frac{m}{n\hat{b}\hat{c}}\DENSL(w,x)$.
	Note
	    $$
		  \frac{3c}{Ln}\leq \hat{c} \Rightarrow \hat{b}
		  	\leq \frac{2Lm}{Ln(1+2\hat{c})} + 1 \leq \frac{2Lm}{Ln + 6 c} + 1 = \frac{2Lm + Ln + 6c}{Ln + 6c}.
	    $$
	Combining this with the bound $\hat{c} \leq \frac{3c}{Ln} + 3 = \frac{3c + 3Ln}{Ln}$, we see that
	    $$
		  \begin{array}{rcl}
			  \frac{m}{n\hat{b}\hat{c}} & \geq & \frac{m}{n}\cdot \frac{Ln + 6c}{2Lm + Ln + 6c}
			  		\cdot \frac{Ln}{3c + 3Ln}\vspace{.05in}\\
				& = & \frac{Ln}{n}\cdot \frac{Ln + 6c}{3Ln + 3c} \cdot \frac{m}{2Lm + Ln + 6c}\vspace{.05in}\\
				& \geq & \frac{L}{1} \cdot \frac{1}{3}
					\cdot \frac{1}{2L + L \frac{n}{m} + 6\frac{c}{m}}\vspace{.05in}\\
				& \geq & \frac{L}{3}\frac{1}{3L + 6\frac{c}{m}}.
		  \end{array}
	    $$
	We have proven the result in both cases.
    \end{proof}

    \begin{lem}\label{LemSubwdLoopDens}
	Suppose for $v,\tilde{v}\in \ALPHABET^*$ the following relationships hold for $a,b\in \NN$:
		  \begin{enumerate}
			\item $|\tilde{v}| = |v| + ab$,
			\item for each $j=0,\dots, b$, $v$ begins at position $1 + ja$ in $\tilde{v}$, and
			\item each occurrence of $v$ in $x$ is contained in an occurrence of $\tilde{v}$.
		  \end{enumerate}
      Then
	    $$
		  \DENSL(v,x) \geq \frac{1}{4L + 8 \frac{a}{|v|}}\DENSL(\tilde{v},x)
		      \mbox{ and }
		  \DENSL(\tilde{v},x) \geq \frac{1}{27} \DENSL(v,x).
	    $$
    \end{lem}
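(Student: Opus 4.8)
The plan is to treat the two inequalities separately; throughout I write $n=|v|$ and $\tilde n=|\tilde v|=n+ab$.

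The lower bound $\DENSL(\tilde v,x)\ge \frac{1}{27}\DENSL(v,x)$ is a direct application of Lemma \ref{LemNbringDens}. Hypothesis~(3) says every occurrence of $v$ lies inside an occurrence of $\tilde v$: if $v$ begins at $p$ and the surrounding $\tilde v$ begins at $p'$, then $p'\le p$ and $p+n-1\le p'+\tilde n-1$, so $0\le p-p'\le \tilde n-n=ab$. Applying Lemma \ref{LemNbringDens} with $w=v$, $u=\tilde v$, $c=ab$ and $p_0=1$, and noting $|u|=\tilde n>n=|w|$, the relevant constant is $C(\tilde v,v,ab,L)=\frac{L}{3}\cdot\frac{1}{3L+6ab/\tilde n}\ge \frac{L}{3}\cdot\frac{1}{3L+6L}=\frac{1}{27}$, where I used $6ab/\tilde n<6\le 6L$. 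This is exactly the second inequality.

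The first inequality is the harder direction, and I would prove it by a direct block count exploiting all $b+1$ copies of $v$ inside $\tilde v$ from hypotheses~(1)--(2); applying Lemma \ref{LemNbringDens} or Lemma \ref{LemSubwdDens} with a single associated copy only gives a bound of order $n/\tilde n$, which decays like $1/b$ and is useless for large $b$. I would split on the size of $ab$. When $ab\le n(2L-1)+4a$, the subword estimate $\DENSL(v,x)\ge \frac{n}{2\tilde n}\DENSL(\tilde v,x)$ of Lemma \ref{LemSubwdDens} already suffices, since then $2\tilde n\le 4Ln+8a$ and hence $\frac{n}{2\tilde n}\ge \frac{1}{4L+8a/n}$. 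For large $ab$ I would fix a prefix $y=x_{[1,n\tilde n LN]}$, which is tiled by $nN$ blocks of length $\tilde n L$ (in which I count $\tilde v$, giving $B$) and by $\tilde n N$ blocks of length $nL$ (in which I count $v$, giving $A$), and pick one occurrence of $\tilde v$ per counted $\tilde n L$-block. Each of these $B$ occurrences, beginning at some $p$, forces copies of $v$ at $p,p+a,\dots,p+ab$. Two elementary bounds then control the count: at most $\lfloor nL/a\rfloor+1$ of these copies begin in any one $nL$-block, and --- because $ab+nL\le \tilde n L$ when $L\ge 1$ --- any fixed $nL$-block receives copies from at most two of the chosen occurrences. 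Therefore the $(b+1)B$ copy-beginnings occupy $A\ge \frac{(b+1)B}{2(\lfloor nL/a\rfloor+1)}\ge \frac{(b+1)a}{2(nL+a)}B$ distinct $nL$-blocks, up to a boundary error that is $O(1/N)$; passing to the $\limsup$ via Remark \ref{RmkDensSubseq} yields $\DENSL(v,x)\ge \frac{(b+1)an}{2(nL+a)\tilde n}\DENSL(\tilde v,x)$.

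The main obstacle is extracting the stated constant $\frac{1}{4L+8a/|v|}$, which amounts to checking that these two regimes reach the target and overlap. A short computation shows the multiplicity estimate exceeds $\frac{1}{4L+8a/n}\DENSL(\tilde v,x)$ once $b$ passes a threshold that is at most $n/a$, while the subword estimate covers $b$ up to $n(2L-1)/a+4$; since $n/a<n(2L-1)/a+4$, the two cases together handle every choice of $a,b,L$. The delicate point is that the factor $2$ in the block-overlap bound is essential: with a factor $3$ (which is what a cruder, fully-disjoint selection of occurrences would give) the two regimes no longer overlap, so the sharp observation $ab+nL\le \tilde n L$ is exactly what makes the argument close. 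The remaining work is routine bookkeeping of truncation terms --- initial occurrences, the last block, and copies that run past the end of $y$ --- all of which are $O(1)$ in the number of blocks and vanish in the limit.
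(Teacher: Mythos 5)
Your proposal is correct and takes essentially the same route as the paper: the $\frac{1}{27}$ bound via Lemma \ref{LemNbringDens} (the paper uses $c = L|\tilde{v}|$, you use $c=ab$; both give $1/27$), and the first inequality by splitting into a regime where Lemma \ref{LemSubwdDens} suffices and a regime handled by counting the $b+1$ arithmetic-progression copies of $v$ inside each occurrence of $\tilde{v}$, with the identical factor-of-two control on occurrences of $\tilde{v}$ from non-adjacent blocks. The only differences are cosmetic bookkeeping --- the paper splits at $|\tilde{v}| \leq 2|v|$ and selects every $\left(\lceil L|v|/a\rceil + 1\right)$-st copy to land in distinct blocks, whereas you split at $ab \leq |v|(2L-1)+4a$ and divide the total copy count by the per-block multiplicity --- and your threshold arithmetic (the two regimes overlap for every $a,b,L$) checks out.
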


    \begin{proof}
	  If we let $w = v$ and $u = \tilde{v}$, then
		      $$
			    \DENSL(\tilde{v},x) \geq \frac{1}{27}\DENSL(v,x)
		      $$
	  with $c = L|\tilde{v}|$, from Lemma \ref{LemNbringDens}.

	If $|\tilde{v}| \leq 2 |v|$, then
		      $$
			    \DENSL(v,x) \geq \frac{1}{4}\DENSL(\tilde{v},x)
		      $$
	from Lemma \ref{LemSubwdDens} with $u = v$ and $ w= \tilde{v}$.

	Now suppose $|\tilde{v}|>2|v|$.
	For fixed $N$, consider the first $NLmn$ block of $x$, $y = x_{[1,NLnm]}$ where $m = |v|$ and $n = |\tilde{v}|$.
	If $\tilde{v}$ occurs in any $Ln$ block but the last, then at least $\lfloor b/\hat{a} \rfloor$ occurrences of $v$ begin in disjoint
	      $Lm$ blocks in $y$, where
		$$
		      \hat{a} = \left\lceil \frac{Lm}{a}\right\rceil+1.
		$$
	As in the last two lemmas, occurrences in $Ln$ blocks of $\tilde{v}$ can overlap at most in pairs.
	Therefore
		$$
		      \SSS_{Nn} \geq \frac{b}{2\hat{a}} (\tilde{\SSS}_{Nm} - 1)
		$$
	where $\SSS_{Nn} = \SSS_{L,Nn}(v,x)$ and $\tilde{\SSS}_{Nm} = \SSS_{L,Nm}(\tilde{v},x)$.
	We see that $\DENSL(v,x) \geq \frac{bm}{2\hat{a}n}\DENSL(\tilde{v},x)$.
	Noting that
		$$
		 \hat{a} \leq \frac{Lm + 2a}{a},~ \frac{m}{n} < \frac{1}{2}~,\mbox{ and } ab = n-m,
		$$
	we conclude that
		$$
		  \begin{array}{rcl}
			  \frac{bm}{2\hat{a}n} & \geq & \frac{abm}{2Lmn + 4an}\vspace{.05in}\\
				    & = & \frac{nm - m^2}{2Lmn + 4an}\vspace{.05in}\\
				    & = & \frac{1 - \frac{m}{n}}{2L + 4\frac{a}{m}}\vspace{.05in}\\
				    & > & \frac{1}{4L + 8 \frac{a}{m}}
		  \end{array}	
		$$
	leaving the proof to end in a similar fashion to those in this section.
    \end{proof}

    \begin{cor}\label{CorSbwdLoopDensSmall}
	Under the conditions of Lemma \ref{LemSubwdLoopDens} with $a < \alpha |v|$ for
		   $\alpha>0$,
	    $$
		  \DENSL(v,x) \geq \frac{1}{4L + 8 \alpha}\DENSL(\tilde{v},x)
		      \mbox{ and }
		  \DENSL(\tilde{v},x) \geq \frac{1}{27} \DENSL(v,x).
	    $$
    \end{cor}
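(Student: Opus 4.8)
The plan is to derive Corollary \ref{CorSbwdLoopDensSmall} directly from Lemma \ref{LemSubwdLoopDens} by substituting the hypothesis $a < \alpha|v|$ into the explicit constant appearing in the first inequality of that lemma. The second inequality, $\DENSL(\tilde v,x)\geq \frac{1}{27}\DENSL(v,x)$, is identical in both statements and so requires no additional argument: it already holds under the assumptions of Lemma \ref{LemSubwdLoopDens}, which the corollary inherits. Thus the entire content of the corollary lies in improving the form of the first bound.

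For the first inequality, I would begin from the conclusion of Lemma \ref{LemSubwdLoopDens}, namely
$$
\DENSL(v,x) \geq \frac{1}{4L + 8\frac{a}{|v|}}\DENSL(\tilde v,x),
$$
and then use monotonicity. Writing $m = |v|$ as in the lemma's proof, the hypothesis $a < \alpha m$ gives $\frac{a}{m} < \alpha$, so $8\frac{a}{m} < 8\alpha$ and hence $4L + 8\frac{a}{m} < 4L + 8\alpha$. Since the right-hand side is a positive, decreasing function of the denominator, replacing $4L + 8\frac{a}{m}$ by the larger quantity $4L + 8\alpha$ only decreases the coefficient, yielding
$$
\frac{1}{4L + 8\frac{a}{m}} > \frac{1}{4L + 8\alpha}.
$$
Chaining this with the lemma's bound produces $\DENSL(v,x) \geq \frac{1}{4L + 8\alpha}\DENSL(\tilde v,x)$, as desired.

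This is a genuinely routine consequence rather than a new argument, so I do not anticipate any real obstacle; the only thing to be careful about is the direction of the inequality, since enlarging a denominator shrinks the reciprocal, and one must confirm that the resulting looser constant still points the inequality the correct way. The benefit of stating the corollary separately is purely notational: it replaces the $v$-dependent denominator $4L + 8\frac{a}{|v|}$ by a uniform constant $4L + 8\alpha$ that does not depend on the particular words, which is exactly what is needed when the corollary is later applied to families of words arising from loops in the special Rauzy graphs, where $a$ scales linearly with $|v|$.
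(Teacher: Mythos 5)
Your proof is correct and matches the paper's (implicit) reasoning: the paper states this corollary without proof as an immediate consequence of Lemma \ref{LemSubwdLoopDens}, obtained exactly as you do by substituting $a < \alpha|v|$ into the denominator and using monotonicity of $t \mapsto 1/t$, with the second inequality carried over unchanged.
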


    \section{Coloring Special Rauzy Graphs}\label{SecColor}

    \subsection{Choosing $\JJJ \subseteq \NN$}

	For minimal aperiodic $\SPACE$ on finite $\ALPHABET$,
	      consider the special Rauzy graphs $\SPRGRAPH(n)$ for
	      all $n\in \NN$ with $K_\mathfrak{s}(n)$ the number of $\mathfrak{s}$-special vertices in
		each $\SPRGRAPH(n)$.	
    For $\SPACE$ satisfying \eqref{EqFixedKComp},
	we choose an infinite subset $\JJJ_0\subseteq \NN$ so that
	for $\mathfrak{s}\in \{\ell,r\}$,
	$K_\mathfrak{s} \equiv K_\mathfrak{s}(n)$ is constant for all $n\in \JJJ_0$.
      As there is a finite number of special Rauzy graphs for a given $(K_\ell,K_r)$,
	we choose infinite $\JJJ_0' \subset \JJJ_0$ so that each (unweighted)
	$\SPRGRAPH(n)$ is equivalent for all $n\in \JJJ_0'$.
    Call this common graph $\BASEGRAPH$.
    Fix a naming of the vertices in $\BASEGRAPH$
        and let $w^{(n)}$ denote the vertex in $\SPRGRAPH(n)$
	    associated to $w$ in $\Lambda$
	for all $n\in \JJJ_0'$.
    We then arrive at infinite $\JJJ \subset \JJJ'_0$ with $w^{(n)} \to \mu_{w}$ as $\JJJ\ni n\to \infty$,
	as described in Section \ref{SsecDensMeas}, for each $w\in \Lambda$.
    As such a $\JJJ$ may always be realized, we will state the desired properties as a standing assumption.
    \begin{assume}\label{ASSUMPTION}
	Consider aperiodic minimal $\SPACE$ with constant complexity growth $K$ as in \eqref{EqFixedKComp} for all $n\geq N_0$.
	We fix an infinite $\JJJ\subseteq \NN$, integers $K_\ell,K_r\leq K$,
	    an unweighted special Rauzy graph $\BASEGRAPH$,
	    and measures $\mu_w$, $w\in \Lambda$, so that
	  \begin{enumerate}\renewcommand{\theenumi}{\alph{enumi}}
	  \item $K_\mathfrak{s}(n) = K_\mathfrak{s}$ for all $n\in \JJJ$, $\mathfrak{s}\in\{\ell,r\}$,
	   \item $\SPRGRAPH(n) \equiv \BASEGRAPH$ for all $n\in \JJJ$, and
	   \item $w^{(n)} \to \mu_{w}$ as $\JJJ\ni n \to \infty$, for each $w\in \Lambda$.
	  \end{enumerate}
    \end{assume}

    \subsection{Marking $\BASEGRAPH$ with $\ERGODICsp$}
     We use the following result from \cite{cBosh1985}, which we apply to our current work.
     %For each $\nu \in \ERGODICsp$ recall our fixed  generic $x = x^{(\nu)}\in \SPACE$ from Definition \ref{DefFixedGeneric}.
      \begin{lem}\label{LemErgHasDens}
	  Assume \ref{ASSUMPTION} with corresponding notation.
	  Let $\nu \in \ERGODICsp$ and set $L = K+1$.
	  For each $\mathfrak{s}\in \{\ell,r\}$ there exists an $\mathfrak{s}$-special vertex $w\in \Lambda$
		    such that
		$$
		    \DENSL(\mu_{w},\nu)\geq  \frac{1}{K_\mathfrak{s}}.
		$$
      \end{lem}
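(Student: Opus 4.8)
The plan is to show that the $\mathfrak{s}$-special words of length $n$ cover the fixed generic point $x = x^{(\nu)}$ densely---precisely, that with $L = K+1$ every block of $Ln$ consecutive positions of $x$ contains the start of at least one $\mathfrak{s}$-special word of length $n$---and then to extract a single vertex of $\BASEGRAPH$ carrying disjoint density at least $1/K_\mathfrak{s}$ by two applications of pigeonhole. The value $L=K+1$ is chosen exactly so that $\COMPLEXITY(n)=Kn+C < (K+1)n = Ln$ for all large $n$.

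First I would establish the covering claim for $\mathfrak{s}=r$. Fix a large $n\in\JJJ$ (so $n\geq N_0$ and $n>C$) and for $i\in\NN$ write $u_i = x_{[i,i+n-1]}\in\LANGUAGE(n)$. If $u_i$ is not right special then $\#\EXTENDr(u_i)=1$, so $x_{i+n}$, and hence $u_{i+1}$, is forced by $u_i$. Suppose some block of $Ln$ consecutive positions contained no right-special start; this is a run $u_{a+1},\dots,u_{a+Ln}$ of non-right-special words. Since there are only $\COMPLEXITY(n)<Ln$ words of length $n$, two of them coincide, say $u_i=u_{i'}$ with $i<i'\leq a+Ln$. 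The period $\{u_i,\dots,u_{i'-1}\}$ consists entirely of non-right-special words, so the rightward forcing perpetuates: $u_{i+k}=u_{i'+k}$ for all $k\geq 0$, making $x_{[i,\infty)}$ periodic and $T^{i-1}x\in\SPACE$ a periodic point, contradicting aperiodicity. Hence the maximal non-right-special run has length at most $\COMPLEXITY(n)<Ln$, proving the claim. For $\mathfrak{s}=\ell$ the same bound holds, the only wrinkle being that non-left-special forcing runs leftward; I would recover the periodic point in the two-sided subshift with language $\LANGUAGE$ (which is aperiodic since $\COMPLEXITY(n)\to\infty$), noting that the resulting run bound is a statement about $\LANGUAGE$ and so applies verbatim to $x$.

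With the covering in hand, fix $\mathfrak{s}$ and a large $n\in\JJJ$. Every length-$Ln$ block of $x$ contains the start of one of the $K_\mathfrak{s}(n)=K_\mathfrak{s}$ many $\mathfrak{s}$-special words of length $n$, which correspond to the $\mathfrak{s}$-special vertices $w$ of $\BASEGRAPH$ via assumption (b). Summing the indicators $r_L(w^{(n)},x,j)$ over these vertices gives $\sum_{w}\SSS_{L,N}(w^{(n)},x)\geq N$ for every $N$. Dividing by $N$ and taking $\limsup_N$ yields $\sum_{w}\DENSL(w^{(n)},x)\geq 1$ (a finite sum of $\limsup$s dominates the $\limsup$ of the sum), so some $\mathfrak{s}$-special vertex $w=w(n)$ satisfies $\DENSL(w^{(n)},x)\geq 1/K_\mathfrak{s}$.

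Finally, $\BASEGRAPH$ has only finitely many $\mathfrak{s}$-special vertices while $\JJJ$ is infinite, so one fixed $\mathfrak{s}$-special vertex $w$ occurs as $w(n)$ for infinitely many $n\in\JJJ$, whence
$$ \DENSL(\mu_w,\nu) = \limsup_{\JJJ\ni n\to\infty}\DENSL(w^{(n)},x) \geq \frac{1}{K_\mathfrak{s}}. $$
I expect the covering claim to be the main obstacle, and within it the left-special case: the forcing argument produces a genuine periodic point only when propagated infinitely in the forcing direction, so for $\mathfrak{s}=\ell$ it must be routed through the two-sided subshift (or phrased purely as a non-occurrence property of $\LANGUAGE$) rather than applied directly to the one-sided sequence $x$.
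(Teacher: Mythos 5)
Your proof is correct, but note that the paper does not prove Lemma \ref{LemErgHasDens} at all---it is imported from Boshernitzan \cite{cBosh1985}---so what you have produced is a self-contained reconstruction, in this paper's notation, of essentially Boshernitzan's original argument. The heart is the syndeticity of $\mathfrak{s}$-special words: every window of $Ln$ positions of $x^{(\nu)}$ meets the start of one, because a run of $Ln>\COMPLEXITY(n)$ consecutive non-$\mathfrak{s}$-special factors forces, Morse--Hedlund style, a repetition whose forcing propagates cyclically into a periodic point, contradicting aperiodic minimality; this is followed by your two pigeonhole steps, over the $K_\mathfrak{s}$ special vertices for each fixed $n$ and then over $n\in\JJJ$, and since $\DENSL(\mu_w,\nu)$ is defined as a limsup over $\JJJ$, a bound along an infinite subsequence suffices. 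Two points you rightly single out are handled correctly: the choice $L=K+1$ is exactly what makes $\COMPLEXITY(n)=Kn+C<Ln$ once $n>C$ (and the finitely many small $n$ are harmless in the limsup), and the left-special case genuinely cannot be run inside the one-sided sequence, since non-left-special words force the \emph{preceding} letter and the propagation dies at position $1$. Your detour through the two-sided shift closes this, though a slightly leaner finish is available: the leftward cyclic forcing already exhibits $w^k\in\LANGUAGE$ for every $k$, where $w$ is the period word, whence the one-sided periodic point $www\cdots$ lies in $\SPACE$ by closedness and shift-invariance, contradicting aperiodicity of the minimal $\SPACE$ directly and avoiding any claim about minimality or aperiodicity of the two-sided subshift. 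Finally, two compatibility details mesh exactly as you use them: the paper's $r_L$ counts words that \emph{begin} in a block (unlike Boshernitzan's containment count, as the paper itself remarks), and your covering claim is phrased in terms of starts; and each $\mathfrak{s}$-special word of length $n$ corresponds to exactly one $\mathfrak{s}$-special vertex of $\SPRGRAPH(n)\equiv\BASEGRAPH$ (a bispecial word splits as $w_\ell,w_r$, each special on only one side), so the denominator $K_\mathfrak{s}$ is the correct count.
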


      \begin{cor}\label{CorKnBd}
	  If $\SPACE$ satisfies Assumption \ref{ASSUMPTION}, then $\#\ERGODICsp\leq \min\{K_\ell,K_r\}$.
      \end{cor}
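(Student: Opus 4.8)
The plan is to convert the existence statement of Lemma \ref{LemErgHasDens} into an injection from $\ERGODICsp$ into the set of $\mathfrak{s}$-special vertices of $\BASEGRAPH$, separately for each $\mathfrak{s}\in\{\ell,r\}$, and then read off the bound. Fix $\mathfrak{s}=\ell$ and set $L=K+1$. Given any $\nu\in\ERGODICsp$, Lemma \ref{LemErgHasDens} furnishes an $\ell$-special vertex $w\in\BASEGRAPH$ with $\DENSL(\mu_w,\nu)\geq 1/K_\ell$. Since $K_\ell\leq K<\infty$, this disjoint density is strictly positive, so the hypothesis $\beta:=\DENSL(\mu_w,\nu)>0$ of Lemma \ref{LemDensMeas} is satisfied.

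I would then invoke Corollary \ref{CorDensMeas}: because $\nu$ is ergodic and $\DENSL(\mu_w,\nu)>0$, it follows that $\nu=\mu_w$. Under the hood this is Lemma \ref{LemDensMeas}, which gives $\nu\geq\frac{\beta}{2L}\mu_w$ with coefficient $\frac{\beta}{2L}\in(0,1)$ (as $\beta\leq 1$), combined with the extremality of ergodic measures recorded in Remark \ref{RmkErgExtreme}. Thus every $\nu\in\ERGODICsp$ is realized as $\mu_w$ for some $\ell$-special vertex $w$ of $\BASEGRAPH$.

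Choosing, for each $\nu$, one such vertex $w(\nu)$ defines a map $\Phi$ from $\ERGODICsp$ to the set of $\ell$-special vertices of $\BASEGRAPH$. This map is injective: if $w(\nu_1)=w(\nu_2)$, then $\nu_1=\mu_{w(\nu_1)}=\mu_{w(\nu_2)}=\nu_2$. Since the number of $\ell$-special vertices equals $K_\ell$, injectivity yields $\#\ERGODICsp\leq K_\ell$. Running the identical argument with $\mathfrak{s}=r$ gives $\#\ERGODICsp\leq K_r$, and taking the minimum of the two bounds yields the claim $\#\ERGODICsp\leq\min\{K_\ell,K_r\}$.

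The proof is short because all of the analytic work is already packaged into Lemma \ref{LemErgHasDens} and Corollary \ref{CorDensMeas}; the only conceptual point is recognizing that ergodicity upgrades ``positive disjoint density'' into ``equality of measures,'' which is precisely what makes the assignment $\nu\mapsto w(\nu)$ injective. The sole things to verify are that the density bound $1/K_\mathfrak{s}$ is genuinely positive (immediate, since $K_\mathfrak{s}$ is finite) and that $\beta/2L$ lands in $(0,1)$ so that Remark \ref{RmkErgExtreme} applies; both are routine, so I do not anticipate a substantive obstacle here.
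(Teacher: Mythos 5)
Your proposal is correct and takes essentially the same route as the paper: the paper's proof likewise combines Lemma \ref{LemErgHasDens} with Corollary \ref{CorDensMeas} to conclude that each $\nu\in\ERGODICsp$ equals $\mu_u$ for some left special $u$ and $\mu_v$ for some right special $v$, and then bounds $\#\ERGODICsp$ by the number of $\mathfrak{s}$-special vertices for each $\mathfrak{s}\in\{\ell,r\}$. The only difference is expository: you spell out the injection $\nu\mapsto w(\nu)$ and the check that $\beta/(2L)\in(0,1)$, which the paper leaves implicit.
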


	\begin{proof}
	    By Corollary \ref{CorDensMeas}, for each $\nu\in \ERGODICsp$ there are
		  left special $u\in \Lambda$ and right special $v\in \Lambda$
		    with $\nu = \mu_u = \mu_v$.
	    Thus $\#\ERGODICsp$ is bounded above by the
		number of left special vertices and by the number of right special vertices.
	\end{proof}

      \begin{defn}\label{DefColor}
	  Under Assumption \ref{ASSUMPTION}, we \emph{mark} (or ``\emph{color}'') a vertex $w\in\BASEGRAPH$
		with $\nu \in \ERGODICsp$
	      if and only if
		    $$
			  \DENSL(\mu_{w},\nu) >0.
		    $$
	  The notation $\COLOR(w) = \nu$ means ``$w$ in $\BASEGRAPH$ is marked by $\nu$'' and
		  $\COLOR(w) = \NIL$ if we do not mark $w$.
      \end{defn}

      By Corollary \ref{CorDensMeas}, $\COLOR(w) = \nu$ implies $\nu = \mu_w$.
      So for each $w\in \Lambda$ there may be at most one $\nu \in \ERGODICsp$
		  that colors it and the above function is well-defined.

      \begin{rmk}
	    For the remainder of the paper, whenever Assumption \ref{ASSUMPTION} holds we will always set $L=K+1$ and
	      therefore will suppress it in notation for $\DENS$.
      \end{rmk}

      \begin{prop}\label{PropColorCycles}
	    Let $\JJJ$ satisfy Assumption \ref{ASSUMPTION} with graph $\BASEGRAPH$ for $\SPACE$.
	    Our coloring relation $\COLOR$ must satisfy the following rules:
		\begin{enumerate}\renewcommand{\theenumi}{\roman{enumi}}
		 \item For each $\nu\in\ERGODIC(\SPACE)$, there must exist a left special vertex $u$ and right special vertex $v$ of $\BASEGRAPH$ so that
			    $\COLOR(u) = \COLOR(v) = \nu$.
		  \item If $v$ is a right special vertex in $\BASEGRAPH$ and $\COLOR(v) = \nu$ then
			    $\COLOR(w) = \COLOR(v)$ for the unique $w$ with $w\to v$ in $\BASEGRAPH$.
			There is a vertex $w'$ with $v\to w'$ in $\BASEGRAPH$ and $\COLOR(w') = \COLOR(v)$.
		  \item If $u$ is a left special vertex in $\BASEGRAPH$ and $\COLOR(u) = \nu$ then
			    $\COLOR(w) = \COLOR(u)$ for the unique $w$ with $u\to w$ in $\BASEGRAPH$.
			There is a vertex $w'$ with $w'\to u$ in $\BASEGRAPH$ and $\COLOR(w') = \COLOR(u)$.
		\end{enumerate}
      \end{prop}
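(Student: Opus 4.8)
The plan is to take rule (i) directly from Lemma~\ref{LemErgHasDens} and to obtain rules (ii) and (iii) from one mechanism: propagating positive disjoint density along edges of $\BASEGRAPH$ using Lemma~\ref{LemNbringDens}. The organizing observation is that every vertex of $\SPRGRAPH(n)$ is a word of length exactly $n$, so all words $w^{(n)}, v^{(n)}, w'_i{}^{(n)}$ below share the common length $n$; this keeps every invocation of Lemma~\ref{LemNbringDens} in the balanced regime $|u|=|w|=n$ with matching $Ln$-blocks, and with $L=K+1$. For rule (i), apply Lemma~\ref{LemErgHasDens} with $\mathfrak{s}=\ell$ to get a left special vertex $u$ with $\DENSL(\mu_u,\nu)\ge 1/K_\ell>0$ and with $\mathfrak{s}=r$ to get a right special vertex $v$ with $\DENSL(\mu_v,\nu)\ge 1/K_r>0$; by Definition~\ref{DefColor} both are colored $\nu$.

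For the first sentence of rule (ii), the geometric input is that a right special vertex $v$ is not left special, so it has in-degree one in $\BASEGRAPH$ and there is a unique edge $w\to v$. By the definition of edges in $\SPRGRAPH(n)$, the non-special words strung between $w^{(n)}$ and $v^{(n)}$ are each uniquely left-extendable, so for $n\in\JJJ$ every occurrence of $v^{(n)}$ at a position exceeding $\rho_n(w\to v)$ is preceded by an occurrence of $w^{(n)}$ at distance exactly $c_n:=\rho_n(w\to v)$. Since the path underlying an edge visits distinct words (aperiodicity rules out a cycle of ordinary vertices), $c_n\le \COMPLEXITY(n)=Kn+C$, and with $L=K+1$ the constant $C(w^{(n)},v^{(n)},c_n,L)=\tfrac{n}{3n+3c_n/L}$ from Lemma~\ref{LemNbringDens} stays above a positive constant $c_0$ independent of $n$. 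Hence $\DENSL(w^{(n)},x)\ge c_0\,\DENSL(v^{(n)},x)$ for the generic point $x=x^{(\nu)}$, and taking $\limsup$ over $\JJJ\ni n$ gives $\DENSL(\mu_w,\nu)>0$, i.e.\ $\COLOR(w)=\nu$. The first sentence of rule (iii) is identical after reversing time: a left special vertex has out-degree one, so every occurrence of $u^{(n)}$ is \emph{followed} by $w^{(n)}$, and the same estimate applies.

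For the existence clauses (the second sentences of (ii) and (iii)) I would combine a union form of Lemma~\ref{LemNbringDens} with a double pigeonhole. A right special $v$ has out-neighbors $w'_1,\dots,w'_d$, and every occurrence of $v^{(n)}$ is followed, within distance $\max_i \rho_n(v\to w'_i)\le Kn+C$, by an occurrence of one of the equal-length words $w'_i{}^{(n)}$. Running the block-counting proof of Lemma~\ref{LemNbringDens} with the target allowed to be any $w'_i{}^{(n)}$ yields $\sum_{i=1}^d \DENSL(w'_i{}^{(n)},x)\ge c_0\,\DENSL(v^{(n)},x)$; a first pigeonhole over the $d$ fixed out-edges selects, for each $n$, an index $i(n)$ with $\DENSL(w'_{i(n)}{}^{(n)},x)\ge \tfrac{c_0}{d}\,\DENSL(v^{(n)},x)$. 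Working along a subsequence of $\JJJ$ on which $\DENSL(v^{(n)},x)$ converges to $\DENSL(\mu_v,\nu)>0$ and pigeonholing once more over the finitely many indices produces a fixed $i^\ast$ and infinitely many $n$ with $\DENSL(w'_{i^\ast}{}^{(n)},x)\ge \tfrac{c_0}{d}\,\DENSL(\mu_v,\nu)-o(1)>0$, whence $\COLOR(w'_{i^\ast})=\nu$ supplies the required out-neighbor. Rule (iii)'s existence clause is the mirror image using the in-neighbors of a left special vertex.

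The hard part will be the existence clauses, since a single occurrence of $v^{(n)}$ only guarantees a nearby copy of \emph{some} $w'_i{}^{(n)}$, so Lemma~\ref{LemNbringDens} cannot be applied to a fixed target. Two points need care: (a) justifying the union form of Lemma~\ref{LemNbringDens}, which is clean here because all targets have the common length $n$ and so its $mL$-blocks are literally the $nL$-blocks; and (b) ordering the two pigeonhole steps so that the same out-neighbor $i^\ast$ is chosen for infinitely many $n$ \emph{and} along a subsequence witnessing $\DENSL(\mu_v,\nu)>0$, so that the final $\limsup$ is genuinely positive rather than being lost when restricting the index set.
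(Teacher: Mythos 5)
Your proposal is correct and takes essentially the same route as the paper's own proof: rule (i) is read off from Lemma~\ref{LemErgHasDens}, the uniqueness halves of (ii)--(iii) propagate positive density along the unique edge via Lemma~\ref{LemNbringDens} with the edge weight bounded linearly in $n$, and the existence halves use a summed block count over the out-neighbors followed by the same two-step subsequence argument (first pass to a subsequence realizing $\DENS(\mu_v,\nu)$ as a limit, then pigeonhole to fix a single neighbor), which is precisely how the paper's proof with $\JJJ'$ and $\JJJ''$ avoids losing positivity of the final $\limsup$.
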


      \begin{proof}
	  (i) is simply a restatement of Lemma \ref{LemErgHasDens} using the notation here.
	  We will prove (ii) as (iii) has a similar proof.

	  Recall the fixed $x=x^{(\nu)}\in \SPACE$ that is generic for $\nu$ from Definition \ref{DefFixedGeneric}.
	  For each $n\in \JJJ$, $w^{(n)}$ occurs at most distance $Ln$ to the left of $v^{(n)}$.
	  By Lemma \ref{LemNbringDens}, $\DENS(w^{(n)},x) \geq \frac{1}{6} \DENS(v^{(n)},x)$.
	  As we assume that $\displaystyle \limsup_{\JJJ\ni n \to \infty}\DENS(v^{(n)},x)>0$, it follows that
		  $$
			\DENS(\mu_{w}, \nu) \geq \frac{1}{6}\DENS(\mu_{v},\nu) >0,
		  $$
	  giving $\COLOR(v)  = \nu$.

	  Let $y^{(1)},y^{(2)},\dots y^{(m)}$ be the vertices such that $v\to y^{(j)}$ in $\BASEGRAPH$
		for each $1\leq j \leq m$, where $m$ is the total number of edges emanating from $v$ in $\BASEGRAPH$.
	  By equation \eqref{EqPsiSpecBd}, $m\leq K+1$.
	  Choose an infinite subset $\JJJ'\subseteq \JJJ$ so that
		    $$
			  \lim_{\JJJ'\ni n \to \infty} \DENS(v^{(n)},x) = \DENS(\mu_{v},\nu).
		    $$
	  Fix $n\in \JJJ'$.
	  For each occurrence of $v^{(n)}$ in $x$, a word $y^{(j,n)}$ for some $j\in \{1,\dots,m\}$
		must occur at most $Ln$ distance to the right of $v^{(n)}$.
	  By similar reasoning to the proof of Lemma \ref{LemNbringDens},
		  $$
			\sum_{j=1}^m \DENS(y^{(j,n)},x) \geq \frac{1}{2}\DENS(v^{(n)},x).
		  $$
	  Therefore, there exists $j_n\in \{1,\dots,m\}$ with
		    $$
		      \DENS(y^{(j_n,n)},x) \geq \frac{1}{2m}\DENS(v^{(n)},x).
		    $$
	  Choose an infinite $\JJJ''\subseteq \JJJ'$ so that for some $j\in \{1,\dots,m\}$, $j_n = j$ for all $n\in \JJJ''$.
	  If $w' = y^{(j)}$, then
		      $$
			    \begin{array}{rcl}
				\DENS(\mu_{w'},\nu) &\geq& \limsup_{\JJJ''\ni n\to \infty} \DENS(y^{(j,n)},x)\vspace{.05in}\\
				      &\geq & \frac{1}{2m} \lim_{\JJJ''\ni n \to \infty} \DENS(v^{(n)},x)\vspace{.05in}\\
				    & \geq & \frac{1}{2(K+1)}\DENS(\mu_{v},\nu)\vspace{.05in}\\
				    & > & 0,
			    \end{array}
		      $$
	  or $\COLOR(w') = \nu$.
      \end{proof}
      
      \begin{cor}\label{CorQBd}
      		For each $\nu\in \ERGODICsp$ and $\BASEGRAPH$ from Assumption \ref{ASSUMPTION},
      		the set $\PRECOLOR(\nu)$ must contain a bispecial edge $u\to v$, meaning $u$ is left
		  special and $v$ is right special.
      		In particular, $\#\ERGODICsp$ is bounded by the number of bispecial edges.
      \end{cor}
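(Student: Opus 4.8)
The plan is to use the local color-propagation rules of Proposition \ref{PropColorCycles} to trace a monochromatic path backward from a colored right special vertex until it is forced to arrive at a colored left special vertex, which then supplies the required bispecial edge.

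First I would fix $\nu \in \ERGODICsp$ and, by Proposition \ref{PropColorCycles}(i), choose a right special vertex $v_0 \in \BASEGRAPH$ with $\COLOR(v_0) = \nu$. Since every vertex of $\BASEGRAPH$ is left or right special but not both, a right special vertex (having at least two outgoing edges) has exactly one incoming edge, while a left special vertex has exactly one outgoing edge. I would then define the deterministic backward walk $v_0, v_1, v_2, \dots$, where $v_{i+1}$ is the unique predecessor of $v_i$. Part (ii) of the proposition guarantees that whenever $v_i$ is right special and colored $\nu$, its unique predecessor $v_{i+1}$ is again colored $\nu$; by induction, every vertex of the walk is colored $\nu$ up to the first time we meet a left special vertex.

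Next I would show the walk must in fact reach a left special vertex. If instead every $v_i$ were right special, then finiteness of $\BASEGRAPH$ forces the walk to repeat, producing a cycle $C$ consisting entirely of right special vertices; each such vertex has in-degree one whose unique incoming edge lies inside $C$, so no edge enters $C$ from outside. But $K_\ell \geq 1$ yields a left special vertex lying outside $C$, and $\BASEGRAPH$ is strongly connected as a consequence of the minimality of $\SPACE$, so some edge must enter $C$ — a contradiction. Hence at some step $v_{i+1}$ is left special while $v_i$ is right special, and the edge $u \to v$ with $u = v_{i+1}$ and $v = v_i$ is a bispecial edge with $\COLOR(u) = \COLOR(v) = \nu$, so it lies in $\PRECOLOR(\nu)$. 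Ruling out this monochromatic right special cycle is the only nontrivial step: it is where I must pass from the purely local rules to the global structure of the graph via strong connectivity.

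Finally, for the counting statement I would invoke that $\COLOR$ is well-defined on vertices (the remark following Definition \ref{DefColor}: $\COLOR(w) = \nu$ forces $\nu = \mu_w$, so each vertex carries at most one color). Consequently a single bispecial edge cannot belong to both $\PRECOLOR(\nu)$ and $\PRECOLOR(\nu')$ for $\nu \neq \nu'$, as this would assign two distinct measures to its endpoints. Thus the assignment $\nu \mapsto (\text{a bispecial edge in } \PRECOLOR(\nu))$ is injective, and therefore $\#\ERGODICsp$ is bounded above by the number of bispecial edges in $\BASEGRAPH$.
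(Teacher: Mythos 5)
Your proof is correct and is essentially the paper's own (implicit) argument: Corollary~\ref{CorQBd} is stated as a direct consequence of Proposition~\ref{PropColorCycles}, and your backward walk propagating $\nu$ along unique predecessors via rule (ii) until a left special vertex is reached is the natural fill-in, with the injectivity of $\nu \mapsto$ (bispecial edge in $\PRECOLOR(\nu)$) following, as you say, from each vertex carrying at most one color. The one step you supply that the paper leaves tacit --- excluding a monochromatic cycle of right special vertices by combining in-degree one with strong connectivity of $\BASEGRAPH$ (a standard consequence of minimality) and $K_\ell \geq 1$ --- is handled correctly.
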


      \begin{defn}
	    Under Assumption \ref{ASSUMPTION} with coloring function $\COLOR$ on $\BASEGRAPH$, let
		  $$
			\PRECOLOR(\nu) = \COLOR^{-1}(\nu)
		  $$
	    be the \emph{preimage} of $\COLOR$ for $\nu\in \ERGODICsp$; that is, 
	      the set of vertices $w\in \BASEGRAPH$ such that $\COLOR(w) = \nu$.
	    Likewise, let $\PRECOLOR(\NIL)$ denote all vertices in $\BASEGRAPH$ that
		  are not colored by $\COLOR$.
      \end{defn}

      \begin{cor}\label{CorDamronsStar}
	      If $\SPACE$ satisfies Assumption \ref{ASSUMPTION}, has constant complexity growth $K$ as in
		  equation \eqref{EqFixedKComp}
		  and either of the following hold:
		      \begin{enumerate}\renewcommand{\theenumi}{\roman{enumi}}
		       \item there exists $\nu \in \ERGODICsp$ so that $\#\PRECOLOR(\nu)>4$, or
			\item $\#\PRECOLOR(\NIL) \geq 3$,
		      \end{enumerate}
	      then $\#\ERGODICsp \leq K-2$.
      \end{cor}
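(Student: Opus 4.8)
The plan is to run a direct double-counting argument on the vertex set of $\BASEGRAPH$, so the first step is to pin down the total number of vertices. Under Assumption \ref{ASSUMPTION} the graph $\BASEGRAPH$ has $K_\ell$ left special and $K_r$ right special vertices, and since every vertex of a special Rauzy graph is either left special or right special but not both, these two families are disjoint and exhaust the vertex set. Hence $\BASEGRAPH$ has exactly $K_\ell + K_r$ vertices. Because constant complexity growth $K$ forces $K_\ell, K_r \leq K$, this gives the ceiling
$$ \#\{\mbox{vertices of }\BASEGRAPH\} = K_\ell + K_r \leq 2K. $$

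Next I would partition the vertices by color. By the remark following Definition \ref{DefColor} (a consequence of Corollary \ref{CorDensMeas}), each vertex receives at most one color, so the sets $\PRECOLOR(\nu)$, $\nu\in\ERGODICsp$, together with $\PRECOLOR(\NIL)$, partition the vertex set. Writing $q = \#\ERGODICsp$, the key lower bound is $\#\PRECOLOR(\nu) \geq 2$ for every $\nu$: by Proposition \ref{PropColorCycles}(i) there is a left special $u$ and a right special $v$ with $\COLOR(u) = \COLOR(v) = \nu$, and these are distinct vertices since no vertex is simultaneously left and right special. Summing over the partition therefore yields
$$ K_\ell + K_r = \#\PRECOLOR(\NIL) + \sum_{\nu\in\ERGODICsp} \#\PRECOLOR(\nu) \geq \#\PRECOLOR(\NIL) + 2q. $$

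Each of the two hypotheses then supplies one extra unit of slack, leading to the same refined bound. In case (i) some $\nu_0$ satisfies $\#\PRECOLOR(\nu_0) > 4$, hence $\#\PRECOLOR(\nu_0)\geq 5$; isolating this term gives $\sum_\nu \#\PRECOLOR(\nu) \geq 5 + 2(q-1) = 2q+3$, so $K_\ell + K_r \geq 2q+3$. In case (ii) substituting $\#\PRECOLOR(\NIL)\geq 3$ into the displayed inequality gives the same conclusion $K_\ell + K_r \geq 2q+3$. Combining either bound with $K_\ell + K_r \leq 2K$ yields $2q + 3 \leq 2K$, whence $q \leq K - \frac{3}{2}$, and since $q$ is an integer we conclude $\#\ERGODICsp = q \leq K-2$.

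I do not expect a genuine obstacle here: the whole argument is the pigeonhole observation that each ergodic measure must occupy at least two vertices of $\BASEGRAPH$ while the total vertex budget is only $K_\ell + K_r \leq 2K$. The only points demanding care are the vertex count $K_\ell + K_r$ (which relies on the disjointness of left and right special vertices) and the per-color lower bound of $2$; once these are fixed the statement follows by arithmetic, with the final rounding $q \leq K - \frac{3}{2} \Rightarrow q \leq K-2$ using only integrality of $q$.
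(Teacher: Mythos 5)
Your proof is correct, and for hypothesis (i) it is essentially identical to the paper's: isolate the measure $\nu_0$ with $\#\PRECOLOR(\nu_0)\geq 5$, use $\#\PRECOLOR(\nu)\geq 2$ for all other measures (which, as you rightly note, follows from Proposition \ref{PropColorCycles}(i) together with the fact that no vertex of $\BASEGRAPH$ is both left and right special), compare against the vertex budget $K_\ell+K_r\leq 2K$, and round $q\leq K-\tfrac{3}{2}$ down by integrality. Where you genuinely diverge is hypothesis (ii). The paper does not run the global count there; instead it restricts the coloring to one side, defining $\PRECOLOR_\mathfrak{s}$ on the $\mathfrak{s}$-special vertices only, notes $\#\PRECOLOR_\mathfrak{s}(\nu)\geq 1$ for every $\nu$ (again Proposition \ref{PropColorCycles}(i)), and uses pigeonhole on $\#\PRECOLOR(\NIL)\geq 3$ to find a side $\mathfrak{s}$ with $\#\PRECOLOR_\mathfrak{s}(\NIL)\geq 2$, yielding $E\leq K_\mathfrak{s}-\#\PRECOLOR_\mathfrak{s}(\NIL)\leq K-2$ directly, with no half-integer rounding. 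Your version substitutes $\#\PRECOLOR(\NIL)\geq 3$ into the same two-sided count $K_\ell+K_r\geq \#\PRECOLOR(\NIL)+2q$ and again rounds $q\leq K-\tfrac{3}{2}$ to $q\leq K-2$; this is valid and buys a unified treatment of both cases with a single inequality, at the cost of leaning twice on integrality of $q$ (harmless, since $q,K\in\NN$). The paper's one-sided argument is slightly sharper in spirit — it localizes the slack to a single side of the graph, which is the mode of reasoning reused later in the proof of Theorem \ref{ThmMain} — but as a proof of this corollary the two are equally rigorous.
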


      \begin{proof}
	    By definition, the sets $\PRECOLOR(\NIL)$ and $\PRECOLOR(\nu)$ for $\nu\in \ERGODICsp$ partition
		vertices of $\BASEGRAPH$, and the number of vertices is bounded by $2K$.
	    Let $E = \#\ERGODICsp$.

	    We first assume (i).
	    By Proposition \ref{PropColorCycles}, $\#\PRECOLOR(\nu) \geq 2$ for each $\nu\in \ERGODICsp$.
	    If $\nu_0\in \ERGODICsp$ has $\#\PRECOLOR(\nu_0) \geq 5$,
	   then
		      $$ 2(E-1) \leq \sum_{\ERGODICsp\ni \nu\neq \nu_0} \#\PRECOLOR(\nu) \leq 2K - \#\PRECOLOR(\nu_0) \leq 2K - 5.$$
	    Thus $E \leq K-2$ since $E$ is an integer.

	    We now assume (ii).
	    Recall that $K_\mathfrak{s}$, the number of $\mathfrak{s}$-special vertices in $\BASEGRAPH$,
		is bounded by $K$.
	    Also, if $\PRECOLOR_\mathfrak{s}$ represents $\PRECOLOR$ restricted to only
		$\mathfrak{s}$-special vertices,
		then by Proposition \ref{PropColorCycles},
		$\#\PRECOLOR_\mathfrak{s}(\nu) \geq 1$
		for all $\nu\in \ERGODICsp$.
	    Because $\#\PRECOLOR(\NIL) \geq 3$ by assumption,
		  there exists $\mathfrak{s}\in \{\ell,r\}$ so that $\#\PRECOLOR_\mathfrak{s}(\NIL) \geq 2$.
	     For this $\mathfrak{s}$,
		  $$ E \leq \sum_{\nu\in \ERGODICsp} \#\PRECOLOR_\mathfrak{s}(\nu) \leq K - \#\PRECOLOR_\mathfrak{s}(\NIL) \leq K-2,$$
	      as desired.
	    \end{proof}

      \section{Bispecial Moves}\label{SecBiSpMoves}

      \subsection{Bispecial Words from $\RGRAPH(n)$ to $\RGRAPH(n+1)$}

      For a language $\LANGUAGE$ satisfying the binary extension condition,
	          we now consider the types of bispecial words described in Section \ref{Ssec2EC} and
	    explore the appropriate transition from Rauzy graph $\RGRAPH(n)$ to
	    Rauzy graph $\RGRAPH(n+1)$.
      For a bispecial $w \in \LANGUAGE(n)$, let $a,a',b,b'\in \ALPHABET$, $a\neq a'$ and $b\neq b'$, be the letters such that
	    $aw,a'w,wb,wb'\in \LANGUAGE(n+1)$.
	\begin{itemize}
	  \item If $w$ is \emph{weak bispecial},
	      the set of two-way extensions $\EXTENDbs(w)$ consists of exactly two words,
		$awb$ and $a'wb'$, up to appropriate naming of $a,a',b,b'$.
	      The transition about $w$ from $\RGRAPH(n)$ to $\RGRAPH(n+1)$ is given in Figure \ref{FigBSMa}.
	  \item If $w$ is \emph{strong bispecial}, then $awb,awb',a'wb,a'wb'\in \LANGUAGE$.
		In particular, both $aw$ and $a'w$ are right special while
		    both $wb$ and $wb'$ are left special.
		The transition from $n$ to $n+1$ is given in Figure \ref{FigBSMc}.
	  \item If $w$ is \emph{regular bispecial}, then $awb,awb',a'wb\in \LANGUAGE$ and
		    $a'wb'\notin\LANGUAGE$, up to renaming the letters.
		See Figure \ref{FigBSMb} for the transition.
	  \end{itemize}
	
      \begin{figure}[t]
	    \begin{center}
		\subfigure[Weak bispecial]{\label{FigBSMa}
		      \includegraphics[width=.6\textwidth]{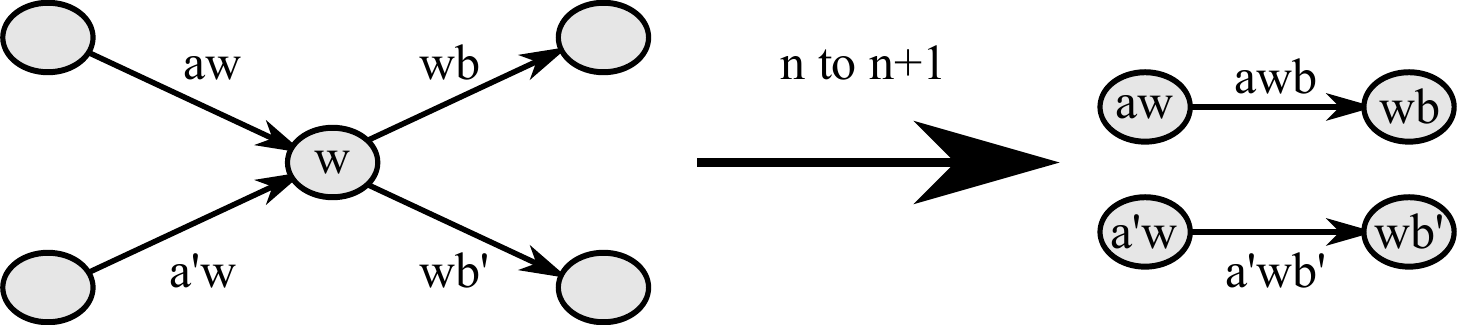}}
	    \end{center}
	    \begin{center}
		\subfigure[Regular bispecial]{\label{FigBSMb}
		      \includegraphics[width=.6\textwidth]{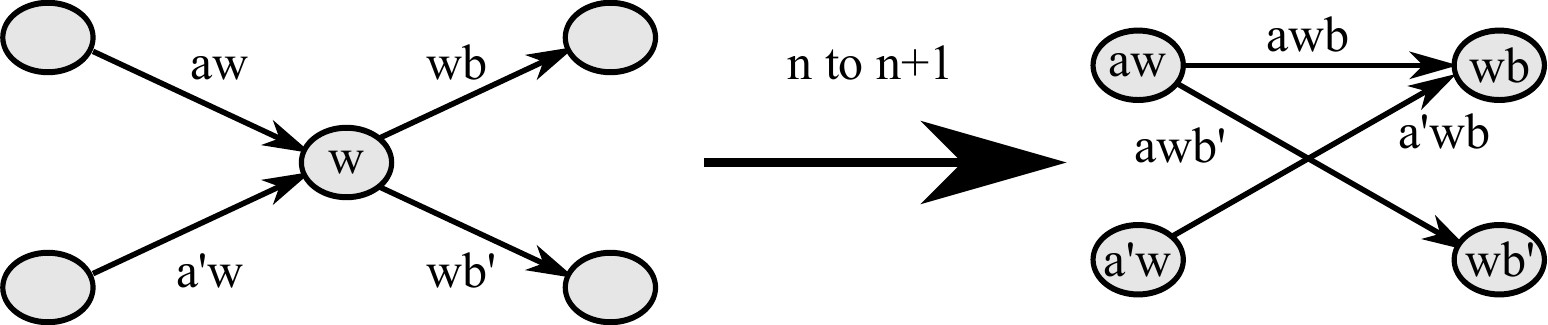}}
	    \end{center}
	    \begin{center}
		\subfigure[Strong bispecial]{\label{FigBSMc}
		      \includegraphics[width=.6\textwidth]{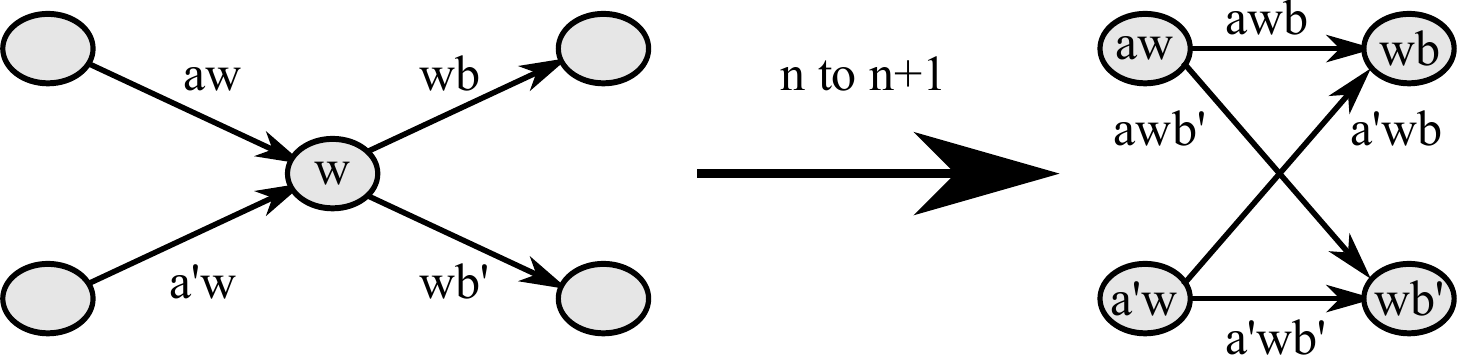}}
	    \end{center}
	    \caption{The three types of bispecial moves on $w$ from Rauzy graph $\RGRAPH(n)$ to Rauzy graph $\RGRAPH(n+1)$.}\label{FigBSM}
      \end{figure}

	Naturally, we would like to describe the transition for the other words in $\LANGUAGE(n)$.
	If $v$ is right (and not left) special, then its unique left extension $av$ is also right special.
	Likewise if $u$ is left (and not right) special, then its unique right extension $ub$ is also left special.
	If $w$ is not special, then its left extension $aw$ is not right special and its right extension $wb$ is
		not left special.
	We conclude that a unispecial word in $\LANGUAGE(n)$ associates uniquely to a unispecial word in $\LANGUAGE(n+1)$.
	However, a bispecial word in $\LANGUAGE(n)$ may associate to zero, one or two special words of each type in
		$\LANGUAGE(n+1)$, depending on the nature of the bispecial word.
	Furthermore, all special words in $\LANGUAGE(n+1)$ must be associated to special words in $\LANGUAGE(n)$ as described
		  here.

	\begin{rmk}
	      If $\LANGUAGE$ does not satisfy the binary extension condition,
		then most of the observations in this section still hold.
	      In particular, there remains a well-defined association between special words
		      in $\LANGUAGE(n+1)$ and those in $\LANGUAGE(n)$.
	      The behavior of a bispecial word $w$ will vary depending on the
		      nature of $\EXTENDbs(w)$.
	      However, there are many possible outcomes.
	      For example, if $w\in \LANGUAGE$ is bispecial such that
		      $aw,wb\in \LANGUAGE$ for all $a,b\in \ALPHABET$, where $\#\ALPHABET> 2$, then
		  $$ \#\ALPHABET \leq \#\EXTENDbs(w) \leq \big(\#\ALPHABET\big)^2.$$
	      To further complicate matters, the local transition from $\RGRAPH(n)$ to $\RGRAPH(n+1)$ is no longer
		    uniquely determined by the value $\#\EXTENDbs(w)$.
	      This is why we typically consider $\LANGUAGE$ with the binary extension condition
		    in detail for the rest of the paper and end with discussions
		    for more general languages.
	\end{rmk}

      \subsection{From $\SPRGRAPH(n)$ to $\SPRGRAPH(n+1)$}
	Now consider the transition from special Rauzy graph $\SPRGRAPH(n)$
		  to special Rauzy graph $\SPRGRAPH(n+1)$.
	If $w$ is a unispecial vertex in $\SPRGRAPH(n)$,
	    then we name $w$ in $\SPRGRAPH(n+1)$ its unique special extension.
	We see that we only need to consider bispecial words of length $n$
	    in order to determine the structure of $\SPRGRAPH(n+1)$ given $\SPRGRAPH(n)$.
	Before we do so, we will briefly note the relationship between $\rho_n(w\to w')$,
	  the weight of edge $w\to w'$ in $\SPRGRAPH(n)$, and $\rho_{n+1}(w\to w')$
	  when both $w$ and $w'$ are unispecial.
	  \begin{itemize}
		\item If $w$ and $w'$ are either both left special or both right special,
			then $$\rho_{n+1}(w\to w') = \rho_n(w\to w').$$
		\item If $w$ is right special and $w'$ is left special, then
			      $$ \rho_{n+1}(w\to w') = \rho_n(w\to w') +1.$$
		\item If $w$ is left special and $w'$ is right special; that is,
			$w\to w'$ is a bispecial edge, then
			      $$ \rho_{n+1}(w\to w') = \rho_n(w\to w') -1.$$
	  \end{itemize}
	It follows that if $\SPRGRAPH(n)$ has no edges of weight $0$ (or equivalently,
	    there are no bispecial words in $\LANGUAGE(n)$), then
	    $\SPRGRAPH(n+1) \equiv \SPRGRAPH(n)$, and only the bispecial edges
	    decrease in weight.
	In fact, the special graphs will remain equivalent until a bispecial edge
	      decreases to weight $0$ and is associated to a bispecial word in
	      $\LANGUAGE$.

	We begin by assuming that $\LANGUAGE$ has the binary extension condition
	      for $N_0$ and $n\geq N_0$.
	For now, assume that $w$ is the only bispecial word of length $n$.
	Recall that $w$ is actually represented by two vertices $u$ and $v$ in $\SPRGRAPH(n)$,
	      where $u$ is left special while $v$ is right special.

	If $w$ is strong bispecial, then there are four vertices in $\SPRGRAPH(n+1)$ associated to $w$.
	    We denote the left extensions (which are \emph{right special}) as $v$ and $v'$,
	      where the choice between the two will be made when needed.
	Likewise, we name the right extensions by $u$ and $u'$,
	    as they are the resulting left special vertices from the transition.
	We call this change a \emph{strong bispecial (SBS) move} on edge $u\to v$.

	If instead $w$ is regular bispecial, then we name the unique right extension that is left special
		$u$ in $\SPRGRAPH(n+1)$ and we name the unique left extension that is right special $v$.
	Note in this case that the other extensions are not vertices in $\SPRGRAPH(n+1)$.
	We call this change a \emph{regular bispecial (RBS) move} on edge $u \to v$.

	If $w$ is weak bispecial, then no extensions will be vertices in $\SPRGRAPH(n+1)$.
	  In this case, the surrounding associated special words will be connected by edges directly.
	We call this change a \emph{weak bispecial (WBS) move} on edge $u \to v$.
	\begin{figure}[t]
	      \begin{center}
		  \includegraphics[width = .7\textwidth]{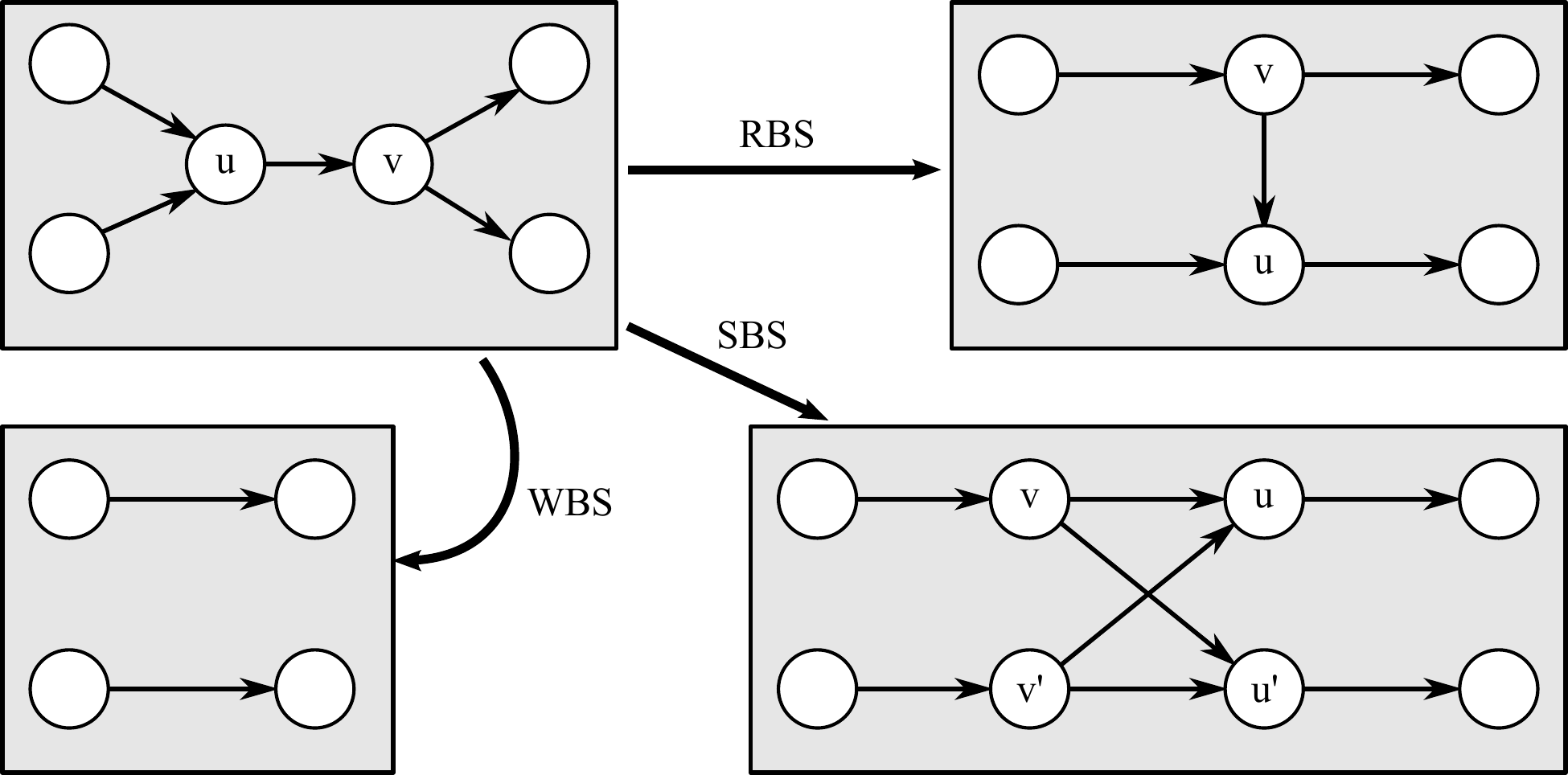}
	      \end{center}
	      \caption{The three types of bispecial moves from $\SPRGRAPH(n)$ to $\SPRGRAPH(n+1)$ about bispecial word $u=v$.
			The outer four vertices represent the
			 attached vertices in the represpective special graphs.}\label{FigAllBS}
	\end{figure}

	Each possible move is given in Figure \ref{FigAllBS}.
	These moves are all illustrated in Figures \ref{FigBSMa}-- \ref{FigBSMc}.
	Often, multiple bispecial words exist of a given length $n$.
	The following lemma tells us that we may realize the transition
	    from $\SPRGRAPH(n)$ to $\SPRGRAPH(n+1)$ (as unweighted graphs) by applying each individual
	  	bispecial move one at a time, in any order we choose.
	While the weights are not claimed to be given by this realization (see Remark \ref{RmkNasty}),
		the difference will not affect the coloring from one graph to the next as
		discussed in the next section.
	\begin{lem}
	      If $u^{(j)} \to v^{(j)}$, $1\leq j \leq m$, are the edges in $\SPRGRAPH(n)$ representing the
		bispecial words $w^{(1)},\dots,w^{(m)}$ of length $n$, then the (unweighted) graph
		$\SPRGRAPH(n+1)$ is obtained by
		applying the associated bispecial move on each edge $u^{(j)} \to v^{(j)}$ one at a time in the order
		$j=1,\dots,m$.
	\end{lem}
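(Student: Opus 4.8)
The plan is to prove this by induction on the number $m$ of bispecial words of length $n$, establishing that the bispecial moves commute as operations on the unweighted graph. The key observation is that the transition from $\SPRGRAPH(n)$ to $\SPRGRAPH(n+1)$ is entirely \emph{local}: each bispecial word $w^{(j)}$ acts only on its own representing edge $u^{(j)}\to v^{(j)}$ and the vertices immediately attached to it, as catalogued in Figure \ref{FigAllBS}. First I would verify the base case $m=1$, which is exactly the content of the three moves (WBS, RBS, SBS) described above: a single bispecial word produces the correct local change, and every unispecial vertex simply renames to its unique special extension, so $\SPRGRAPH(n+1)$ is obtained by applying the one relevant move.

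For the inductive step, the crucial point is that the edges $u^{(j)}\to v^{(j)}$ for distinct $j$ are \emph{disjoint as edges}: since the $w^{(j)}$ are distinct words, their left-special and right-special representatives are distinct vertices, and a bispecial edge $u^{(j)}\to v^{(j)}$ has weight $0$, so no two such edges can share a vertex (a vertex is either left or right special but not both). I would argue that applying the move for $w^{(j)}$ affects only the endpoints $u^{(j)}, v^{(j)}$ and the four attached vertices, reconnecting them according to the move type, while leaving the rest of the graph combinatorially untouched. Because the bispecial edges are pairwise vertex-disjoint, the region modified by one move does not overlap the bispecial edge of any other move; hence the moves operate on disjoint local pieces and can be applied in any order to yield the same unweighted result.

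The main obstacle is bookkeeping the intermediate graphs: after applying some moves, the graph is no longer literally $\SPRGRAPH(n)$ nor any $\SPRGRAPH$ of an actual language, so I must track the unprocessed bispecial edges and confirm that a move on one of them still ``sees'' the same local configuration it would have seen in $\SPRGRAPH(n)$. The point to nail down is that although a move can delete the vertices $u^{(j')},v^{(j')}$ (in the WBS case) or split them (SBS case), the \emph{attached} vertices it connects to are unispecial neighbors, and these are never themselves bispecial endpoints being deleted; an attached vertex may be an endpoint $u^{(j'')}$ of a \emph{later} bispecial edge, but its identity and incidence as an attachment point are preserved across the earlier move. I would therefore check that each attachment slot is stable under the previously applied moves, so that processing edge $j$ against the current graph produces the same reconnection as processing it against $\SPRGRAPH(n)$. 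Finally I would note, per the preceding discussion and Remark \ref{RmkNasty}, that only the unweighted structure is claimed, so I need not track how the weights redistribute --- this sidesteps the genuinely delicate part, since weights do not in general decompose additively under the sequential realization.
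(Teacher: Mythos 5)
Your proposal is correct in substance but takes a genuinely different route from the paper. Your core mechanism is sound: the $m$ bispecial edges are pairwise vertex-disjoint (each $u^{(j)}$ is left special, each $v^{(j)}$ is right special, and no vertex of $\SPRGRAPH(n)$ is both), each move is a local surgery that re-targets only the ends of the slot edges incident to $u^{(j)},v^{(j)}$ while never altering their far endpoints, and a bispecial edge can never itself be a slot of another move (its endpoints have the wrong specialness type); hence the surgeries act on disjoint sites, commute, and their sequential composition agrees with the simultaneous local description of $\SPRGRAPH(n+1)$. The paper instead proves the lemma by \emph{realizing} the sequential process inside an actual language: it recodes $\LANGUAGE$ at scale $n$ via the projection $\pi_n$, then applies a substitution $\sigma$ inserting blocks of fresh letters $x_j^i,y_j^i$ of carefully chosen lengths $f(j)$ around the special words, so that in the auxiliary language $\LANGUAGE_2$ the images of $w^{(1)},\dots,w^{(m)}$ become bispecial at \emph{distinct} lengths $4m+2j+1$; the moves then literally occur one at a time, with $\SPRGRAPH_2(1)\equiv\SPRGRAPH(n)$ and explicit bijections $\Xi_\ell,\Xi_r$ identifying $\SPRGRAPH_2(6m+2)$ with $\SPRGRAPH(n+1)$. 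What the paper's construction buys --- and your approach does not --- is that every intermediate graph is the special Rauzy graph of a genuine language, which is exactly the object your ``main obstacle'' paragraph worries about losing; this is what permits the weight estimate of Remark \ref{RmkNasty} (deviation at most $7m^2$) and the transport of measure-theoretic results such as Lemma \ref{LemColorMoves}, stated for one move of an actual shift, across simultaneous moves. Your approach buys elementarity and order-independence directly, but delivers only the unweighted claim, which is indeed all the lemma asserts.

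One caveat on the write-up: the framing ``induction on $m$'' is vacuous as stated, since there is no language realizing the same configuration with $m-1$ bispecial words of length $n$, and (as you concede) the intermediate graphs after some moves need not be the special Rauzy graph of any language, so there is no object to which an inductive hypothesis applies. The actual content of your argument is not induction but (a) a direct verification, valid for all $m$ simultaneously, that the vertex and path correspondence between $\RGRAPH(n+1)$ and $\RGRAPH(n)$ identifies $\SPRGRAPH(n+1)$ with the result of performing all $m$ local surgeries at once, and (b) the commutation of disjoint surgeries via slot stability. In a final version you should drop the induction scaffolding, prove (a) explicitly, and in (b) treat the self-attached configurations of Figure \ref{FigLoop1}, where a slot edge of a move joins that move's own two endpoints, as these are not covered by the generic four-attached-vertices picture of Figure \ref{FigAllBS}.
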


	\begin{proof}
		Let $\ALPHABET_1 = \LANGUAGE(n)$ and $\pi_n : \ALPHABET^* \to \ALPHABET_1^*$ be given by
		      $\pi_n(w) = \EMPTYWORD$ (the empty word) if $|w|<n$ and
			$$ \big(\pi_n(w)\big)_i = w_{[i,i+n-1]}\mbox{ for } 1\leq i \leq |w| - n +1$$
		if $|w| \geq n$.
		Then $\LANGUAGE_1 = \pi_n(\LANGUAGE)$ is well-defined and $\pi_n$ is an isomorphism
		    from $\RGRAPH(k+n-1)$ to $\RGRAPH_1(k)$ (the Rauzy graph given by $\LANGUAGE_1$)
		    for each $k\geq 1$.
		This implies that $\SPRGRAPH_1(k) \equiv \SPRGRAPH(k+n-1)$ for all $k\geq 1$,
		    with equal edge weights.

	      Let $m_\mathfrak{s} \geq m$, $\mathfrak{s}\in \{\ell,r\}$, be the number
		      of $\mathfrak{s}$-special elements in $\LANGUAGE_1(1)$.
		Order the left special words of $\LANGUAGE_1(1)$ as $U^{(1)},\dots, U^{(m)\ell)}$
		    and the right special words as $V^{(1)},\dots,V^{(m_r)}$ so that
		      $W^{(j)} := U^{(j)} = V^{(j)}$ is the image of $w^{(j)}$ by $\pi_n$, $1\leq j \leq m$.
		If
			$$ f(j) = \RHScase{2m + j, & 1\leq j \leq m, \\
				    7m, & m< j,}$$
		for $1\leq j \leq m_\ell$ and $1\leq i \leq f(j)$, let $x_j^i$
		    be distinct letters that do not belong to $\ALPHABET_1$.
	      Likewise, for $1 \leq j \leq m_r$ and $1\leq i \leq f(j)$ let $y_j^i$ be
			distinct letters that do not belong to $\ALPHABET_1$ or equal to any of the $x_j^i$'s.
		Then let
		  $$\ALPHABET_2 = \ALPHABET_1 \cup \{x_j^i: 1\leq j \leq m_\ell,~1 \leq i \leq f(j)\}
			  \cup \{y_j^i: 1\leq j \leq m_r,~1 \leq i \leq f(j)\}.$$
		Define the substitution $\sigma:\ALPHABET_1^* \to \ALPHABET_2^*$ by the rule
			$$ \sigma(W) = \RHScase{x_j^1\dots x_j^{f(j)} W y_j^1\dots y_j^{f(j)}, & W = W^{(j)},~1\leq j\leq m,\\
					  x_j^1\dots x_j^{f(j)} W, & W = U^{(j)},~m<j\leq m_\ell,\\
					  W y_j^1\dots y_j^{f(j)}, & W = V^{(j)},~m<j\leq m_r,\\
					  W, \mbox{otherwise},}$$
		  for each $W\in \LANGUAGE_1(1)$ and extend by concatenation.
		Let $\LANGUAGE_2$ be the language generated by the image of $\LANGUAGE_1$ under $\sigma$,
			meaning $W'\in \LANGUAGE_2$ iff it is a subword of $\sigma(W)$ for some $W\in \LANGUAGE_1$.
		
	      Let $\Pi:\LANGUAGE_2 \to \LANGUAGE_1$ be defined as follows: $\Pi(W) = Y$ where $Y$ is the
		      unique word of minimal length so that $W$ is a subword of $\sigma(Y)$.
	      Therefore, for any proper subword $Y'$ of $Y = \Pi(W)$,
		      $$ |\sigma(Y')|< |W| \leq |\sigma(Y)|.$$
		We make the following claims for $W$ and $Y = \Pi(W)$:
		      \begin{itemize}
		       \item $W\in \LANGUAGE_2$ is left special iff $W$ is a prefix of left special $Y$,
		       \item $W\in \LANGUAGE_2$ is right special iff $W$ is a suffix of right special $Y$,
			\item $W\in \LANGUAGE_2$ is bispecial iff $W = \sigma(Y)$ and $Y$ is bispecial,
			\item for $1\leq k \leq  6m+1$, $W\in \LANGUAGE_2(k)$ is bispecial iff
					  $Y = W^{(j)}$, $1\leq j \leq m$ and $|W| = 4m + 2j +1$.
		      \end{itemize}
		It follows that $\SPRGRAPH_2(1) \equiv \SPRGRAPH_1(1) \equiv \SPRGRAPH(n)$
			    %$\SPRGRAPH_2(6m+2) \equiv \SPRGRAPH_1(2) \equiv \SPRGRAPH(n+1)$,
		    and for each $1\leq j \leq m$, the change from $\SPRGRAPH_2(4m+2j+1)$ to
		    $\SPRGRAPH_2(4m+2j+2)$ is realized by exactly one bispecial move
		    and the move is by the to the one given by the edge $u^{(j)}\to v^{(j)}$
		    from the original special graph $\SPRGRAPH(n)$ to $\SPRGRAPH(n+1)$.

		We finish with the claim $\SPRGRAPH_2(6m+2) \equiv \SPRGRAPH_1(2)$ as unweighted graphs.
		To do so, we define bijections $\Xi_\mathfrak{s}$, $\mathfrak{s}\in \{\ell,r\}$
			that assigns to each $\mathfrak{s}$-special word in $\LANGUAGE_2(6m+2)$
			an $\mathfrak{s}$-special word in $\LANGUAGE_1(2)$.
		We fully and define $\Xi_\ell$ and prove its bijectivity, as the case for $\Xi_r$ follows
			by analogy.

		For left special $\tilde{U}\in \LANGUAGE_2(6m+2)$, let $Y = Y_1\dots Y_k = \Pi(\tilde{U})$.
		If $|Y|\geq 2$, let $\Xi_\ell(\tilde{U}) = Y_1Y_2$.
		If $|Y| = 1$, then $Y = Y_1 = U^{(j)}$, $m<j\leq m_\ell$, as $|Y_1| \geq 6m+2$.
		In particular, $Y_1$ is not right special and must admit a unique right extension $Y_1Z_1$ that
		      is also left special. In this case $\Xi_\ell(\tilde{U}) = Y_1Z_1$.

		We claim $\Xi_\ell$ is injective.
		Suppose $\Xi_\ell(\tilde{U}) = \Xi_\ell(\hat{U})$.
		If $|\Pi(\tilde{U})| \leq 2$,
			      then it follows that $\Pi(\hat{U}) = \Pi(\tilde{U})$ and therefore $\tilde{U} = \hat{U}$.
		Suppose by contradiction that $\tilde{U} \neq \hat{U}$ and $\Pi(\tilde{U}) \neq \Pi(\hat{U})$
			with $|\Pi(\tilde{U})|,|\Pi(\hat{U})|\geq 3$.
		If
			$$ \Pi(\tilde{U}) = Y_1\dots Y_k
			      \mbox{ and }
			    \Pi(\hat{U}) = Y'_1\dots Y'_{k'}$$
		then there exists $2\leq k_1 <k,k'$ so that
		      $Y_1\dots Y_{k_1} = Y'_1\dots Y_{k_1}'$.
		However this implies that $Y_1\dots Y_{k_1}$ is bispecial and in particular $Y_k$ is right special.
		In this case
			$$ |\sigma(Y_1\dots Y_{k_1})| = |\sigma(Y_1)| + |\sigma(Y_{k_1})| \geq 2[4m+3] > 6m+3,$$
		contradicting the definition of $\Pi(\tilde{U})$.
		 
		We now show that $\Xi_\ell$ is surjective.
		Let $Y\in \LANGUAGE_1(2)$ be left special and consider its maximal unique right extension
		    $YZ$.
		Because either $Y$ is itself bispecial (and so $Z = \EMPTYWORD$, the empty word)
		      or $YZ$ is bispecial, it follows that $YZ$ ends with a right special word.
		Therefore $|\sigma(YZ)| \geq 2[4m+3] >6m+3$.
		If $U = \sigma(YZ)_{[1,6m+2]}$,
		    $\Pi(U)$ is a prefix of $YZ$ and so $\Xi_\ell(U) = Y$.

		The maps $\Xi_\ell$ and $\Xi_r$ imply a bijection from edge words $W \to W'\in \SPRGRAPH_2(6m+2)$
		      to edge words $\Xi_\mathfrak{s}(W) \to \Xi_{\mathfrak{s}'}\in \SPRGRAPH_1(2)$,
		      where $W$ and $W'$ are $\mathfrak{s}$-special and $\mathfrak{s}'$-special
		      vertices respectively.
	\end{proof}

	\begin{rmk}\label{RmkNasty}
	      While separating $m$ simultaneous bispecial moves into $m$ steps does not
		    yield a graphs with equal edge weights, it may be shown that
		    $$|\rho_2(W\to W') - \rho_1\big(\Xi_\mathfrak{s}(W) - \Xi_\mathfrak{s}(W')\big)| \leq 7m^2,$$
		where these objects are defined in the previous proof.
	      Here $\rho_1$ gives the edge weights for $\SPRGRAPH_1(2)$ and $\rho_2$ gives the edge weights for $\SPRGRAPH_2(6m+2)$.
	      For example, if $W\to W'$ is a bispecial edge, then
			$\rho_1\big(\Xi_\ell(W) \to \Xi_r(W')\big)$ is equal to $\rho_2(W \to W')$ minus
		      the appearances of $x_j^i$'s and $y_j^i$'s
		      in the edge word $W \to W'$.
	      Because $K$ bounds $m$, this difference is
		    small for large $n$.
	      Therefore, we may extend results such as Lemma \ref{LemColorMoves} below,
		    which addresses one bispecial move from $\SPRGRAPH(n)$ to $\SPRGRAPH(n+1)$,
		    to the case of simultaneous bispecial moves.
	\end{rmk}

	If $\LANGUAGE$ does not satisfy the binary extension condition,
	      the principles in this section still apply.
	For example, the special graphs $\SPRGRAPH(n)$ remain the same as $n$ changes unless a bispecial edge's weight decreases to 0.

	\subsection{Finding $\BASEGRAPH'$ from $\BASEGRAPH$}
	As indicated in the previous section,
	     $$\SPRGRAPH(n) \equiv \SPRGRAPH(n+1)$$
	    if
	  no bispecial words of length $n$ exist in $\LANGUAGE(n)$.
	\begin{defn}
	    For fixed $\LANGUAGE$ and any $n\in \NN$, let the \emph{next bispecial value} for $n$ be
		  $$
			\BISPECIAL_n := \min\{n'\geq n: \LANGUAGE(n') \mbox{ contains a bispecial word}\}.
		  $$
	    We call the set of \emph{bispecial values} for $\LANGUAGE$
		  $$
			\BISPECIAL = \{\BISPECIAL_n: n\in \NN\}.
		  $$
	    The number of \emph{bispecial steps} from $m$ to $m'>m$ is given by
		  $$
			\#\left(\BISPECIAL \cap [m,m')\right).
		  $$
	\end{defn}
	
	Suppose we have $\JJJ$ with $\BASEGRAPH$ from Assumption \ref{ASSUMPTION}.
	For each $n\in \JJJ$, let $n' = \BISPECIAL_n+1$.
	Then $\SPRGRAPH(\tilde{n})\equiv \Lambda$ for $n\leq \tilde{n} < n'$
	      and $K_\ell(n'),K_r(n')\leq K$.
	We may therefore choose an infinite set $\JJJ'$ of such $n'$ values and a special graph $\BASEGRAPH'$
		  so that $\SPRGRAPH(n') \equiv \BASEGRAPH'$.
	Therefore $\JJJ'$ and $\BASEGRAPH'$ satisfy Assumption \ref{ASSUMPTION} parts (a) and (b).
	 By passing to another infinite subsequence, $\JJJ'$ will satisfy (c) as well.
	 We will define a coloring function $\COLOR'$ on $\BASEGRAPH'$.
	 Because we will want to relate $\COLOR$ on $\BASEGRAPH$ to $\COLOR'$ on $\BASEGRAPH'$,
	    we then reduce $\JJJ$ so that the map $n \mapsto \BISPECIAL_n +1$ is a
	    bijection from $\JJJ$ to $\JJJ'$.
	
		Because we are replacing $\JJJ$ with a subset, it is possible that
		  $\COLOR(w)$ will now be $\NIL$ when it was initially $\nu\in \ERGODICsp$,
		  as $\COLOR$ depends on $\JJJ$.
	      To prevent the loss of color when producing new subsequences, we amend Assumption \ref{ASSUMPTION}
		  so that $\COLOR$ will be preserved when reducing $\JJJ$.
	      \begin{assume}\label{ASSUMPTION2}
		     Consider $\SPACE$, $\JJJ$, $\BASEGRAPH$, $K_\ell$, $K_r$ that satisfy Assumption \ref{ASSUMPTION}.
		      By replacing $\JJJ$ with a subsequence, the original conditions
			(a)--(c) from Assumption \ref{ASSUMPTION} hold
		      and furthermore
			  \begin{enumerate}\renewcommand{\theenumi}{\alph{enumi}}
					      \setcounter{enumi}{3}
			   \item For each $w\in \BASEGRAPH$ with representative $w^{(n)}\in \SPRGRAPH(n)$ for all $n\in \JJJ$
				      and for each $\nu\in \ERGODICsp$,
				  $$\lim_{n\to\infty}\DENS(w^{(n)},x) = \DENS(\mu_w,\nu)$$
				  where $x$ is the generic point for $\nu$ from Definition \ref{DefFixedGeneric}.
			  \end{enumerate}
	      \end{assume}
	      In other words, because $\DENS(\mu_w,\nu):= \DENSL(\mu_w,\mu)$ is defined via a limsup,
		  it may now be realized as a limit.
	      Under this new assumption, $\COLOR$ will not change if $\JJJ$ is ever restricted
			to a subset.
	      Note that $\#\ERGODIC(\SPACE)<\infty$ under Assumption \ref{ASSUMPTION} by Corollary \ref{CorQBd}.
	      Therefore, Assumption \ref{ASSUMPTION2} may be used whenever Assumption
		    \ref{ASSUMPTION} holds.

	\begin{defn}
	    The new set $\JJJ'$ with corresponding data is the result of \emph{one bispecial step}
	      from $\JJJ$.
	    For any $M\in \NN$, we may analogously define $\JJJ^{(M)}$ that satisfies Assumption \ref{ASSUMPTION2}
		and is the result of \emph{$M$ bispecial steps} from $\JJJ$ by choosing
		  each $\JJJ^{(m+1)}$ to be one bispecial step from $\JJJ^{(m)}$ for each $m<M$.
	\end{defn}

	\begin{rmk}
	    As we shall see soon, it is possible to have $\BASEGRAPH' = \BASEGRAPH$ or even
		$\BASEGRAPH^{(M)} = \BASEGRAPH^{(M-1)} = \dots = \BASEGRAPH' = \BASEGRAPH$.
	\end{rmk}

	\begin{lem}\label{LemColorMoves}
	    Consider $\SPACE$ with language $\LANGUAGE$ satisfying the binary extension condition for $N_0$.
	    Suppose $\BASEGRAPH$ is from Assumption \ref{ASSUMPTION2} and $\BASEGRAPH'$ is the result of one bispecial step.
	    Let $\COLOR$ denote the marking function on $\BASEGRAPH$ and $\COLOR'$ denote the marking function on $\BASEGRAPH'$.
	    \begin{enumerate}\renewcommand{\theenumi}{\roman{enumi}}
	     \item If $w\in \BASEGRAPH$ is not an endpoint of an edge participating in a bispecial move, then $w \in \BASEGRAPH'$ and $\COLOR(w) = \COLOR'(w)$.
	      \item Suppose $u\to v$ in $\BASEGRAPH$ is changed by an SBS move with corresponding left special vertices $u,u'$
			  and right special vertices $v,v'$ in $\BASEGRAPH'$.
		    Then
			$$
			      \{\COLOR(u)\}\subseteq \{\COLOR'(u),\COLOR'(u')\} \subseteq \{\COLOR(u),\NIL\}
			$$
		      and
			$$
			      \{\COLOR(v)\}\subseteq \{\COLOR'(v),\COLOR'(v')\} \subseteq \{\COLOR(v),\NIL\},
			$$
		    where we recall that $\COLOR(v) = \COLOR(u)$.
	      \item Suppose $u\to v$ in $\BASEGRAPH$ is changed by an RBS move to get $u$ and $v$ in $\BASEGRAPH'$.
		    Then
			  $$
			      \{\COLOR(u)\}\subseteq \{\COLOR'(u),\COLOR'(v)\} \subseteq \{\COLOR(u),\NIL\}.
			  $$
	      \item Suppose $u\to v$ in $\BASEGRAPH$ is changed by a WBS move to $\BASEGRAPH'$.
		    Then for the four vertices $w_1,\dots,w_4$ in $\BASEGRAPH'$ that are connected by the edges made
			    by $u\to v$ from $\BASEGRAPH$,
			  $$
			      \{\COLOR(u)\}\subseteq \{\COLOR'(w_1),\COLOR'(w_2),\COLOR'(w_3),\COLOR'(w_4)\}
			      	\subseteq \{\COLOR(u),\NIL\}.
			  $$
	    \end{enumerate}
	\end{lem}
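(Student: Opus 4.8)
\section*{Proof proposal for Lemma \ref{LemColorMoves}}

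The plan is to reduce to a single bispecial move and then transport positive disjoint density along forced word extensions. Recall from the previous subsection that $\SPRGRAPH(k)\equiv\BASEGRAPH$ as unweighted graphs for all $n\le k<n'$, and that only the bispecial edges lose weight until the move occurs at $k=n'-1=\BISPECIAL_n$. Hence every vertex $w$ of $\BASEGRAPH$ persists up to length $n'-1$ as a word $w^{(k)}$ obtained by forced one-sided extensions: since a unispecial word has a unique extension on its non-special side, \emph{every} occurrence of $w^{(n)}$ in the generic point $x=x^{(\nu)}$ extends to a unique occurrence of $w^{(k)}$, so the occurrence sets of $w^{(n)}$ and $w^{(n'-1)}$ are in bijection up to a position shift of at most $n'-1-n$. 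At the move $k=n'-1\to n'$, Figure \ref{FigAllBS} records exactly how the occurrences of the bispecial word $w$ split among its descendant words. First I would record these occurrence relationships in each of the four cases.

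Next I would isolate two transfer principles, both from Lemma \ref{LemNbringDens}. \emph{Persistence (short to long):} if $W_2$ is a forced extension of $W_1$, so that every occurrence of $W_1$ lies inside an occurrence of $W_2$, then applying Lemma \ref{LemNbringDens} with the roles $w=W_1,\ u=W_2$ (its second case, since $|W_2|>|W_1|$) gives $\DENS(W_2,x)\ge \tfrac{L}{9(L+2)}\,\DENS(W_1,x)$, a bound \emph{independent of the lengths}. When $W_1$'s occurrences split among $m\le K+1$ forced extensions, a pigeonhole as in the proof of Proposition \ref{PropColorCycles}, followed by passing to a subsequence of $\JJJ$, produces one descendant of positive density. \emph{Containment (long to short):} if every occurrence of $W_2$ contains an occurrence of $W_1$ within bounded distance, Lemma \ref{LemNbringDens} (first case) gives $\DENS(W_1,x)\ge C\,\DENS(W_2,x)$. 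Together with Corollary \ref{CorDensMeas}, which forces $\mu_w=\nu$ whenever a mark is present, persistence pushes a color forward (yielding the left-hand inclusions) and containment pushes it backward (yielding the right-hand inclusions).

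With these in hand the four cases become bookkeeping against Figure \ref{FigAllBS}. In (i) the occurrences of $v^{(n)}$ and $v^{(n')}$ are in bijection, so both principles apply and $\COLOR(v)=\COLOR'(v)$. In (ii) the occurrences of $w$ split on the right as $wb\sqcup wb'$ (the vertices $u,u'$) and on the left as $aw\sqcup a'w$ (the vertices $v,v'$); persistence plus pigeonhole places $\COLOR(u)=\COLOR(v)$ on some $u$-side and some $v$-side descendant, while each descendant contains $w$, so containment forces any present color to equal $\COLOR(w)=\COLOR(u)$, giving both inclusions. Cases (iii) and (iv) are identical in spirit: one reads off the surviving (RBS) or attached (WBS) vertices from Figure \ref{FigAllBS}, uses persistence to land $\COLOR(u)$ on one of them---invoking Proposition \ref{PropColorCycles} to carry density off of any intervening non-special extension word and back onto a named vertex---and uses containment for the reverse inclusion.

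The one genuinely delicate point, which I expect to be the main obstacle, is that the containment constant $C$ behaves like $|W_1|/|W_2|=n/n'$ and would be vacuous in the limit if the ratio $n'/n$ were unbounded. I would close this by bounding the gap $n'-n$. Each edge weight $\rho_n(w\to w')$ is the length of a path in $\RGRAPH(n)$ through non-special words; in a minimal aperiodic shift every non-special word has a unique two-sided extension, so such a path has in- and out-degree one at each interior vertex and cannot revisit a vertex without producing a periodic orbit. Thus its length is at most $\COMPLEXITY(n)=Kn+C$, and since the next bispecial length exceeds $n$ by at most the minimal bispecial edge weight, $n'-n=O(n)$ and $n'/n$ is bounded by a constant depending only on $K$. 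This keeps every constant in Lemma \ref{LemNbringDens} uniformly positive; combined with Assumption \ref{ASSUMPTION2}(d), which realizes each $\DENS(w^{(n)},x)$ as a genuine limit, it promotes ``$\DENS>0$ for each $n$'' to ``$\lim\DENS>0$'' and completes every inclusion.
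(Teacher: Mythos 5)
Your proposal is correct and follows essentially the same route as the paper: the backward inclusions via subword/containment density transfer (the paper uses Lemma \ref{LemSubwdDens}, which it notes is interchangeable with the first case of Lemma \ref{LemNbringDens} that you invoke), the forward inclusions via splitting occurrences among the forced descendant words with the pigeonhole-and-subsequence argument from the proof of Proposition \ref{PropColorCycles}, and Assumption \ref{ASSUMPTION2}(d) to upgrade to limits. The ``delicate point'' you flag is exactly where the paper's proof begins --- it establishes $\BISPECIAL_n - n < (K+1)n$ by bounding bispecial edge weights by the number of edges $\COMPLEXITY(n+1)$ of $\RGRAPH(n)$, while you bound the non-special path by the vertex count $\COMPLEXITY(n)$; both give the needed uniform constants.
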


	\begin{proof}
	
	 	Note that for any $\SPACE$ satisfying equation \eqref{EqFixedKComp} for $n\geq N_0$,
		    $$
		      \BISPECIAL_n - n < (K+1)n
		    $$
		  for all large enough $n$,
		      as any bispecial edge in $\SPRGRAPH(n)$ has weight at most $\COMPLEXITY(n+1)$,
		      the number of edges in Rauzy graph $\RGRAPH(n)$.
		Consider for a vertex $w\in \BASEGRAPH$ or $w\in \BASEGRAPH'$ the associated
		    words $w^{(n)} \in \LANGUAGE(n)$ for $n\in \JJJ$ and $w^{(n')}\in \LANGUAGE(n')$
		    for $n' \in \JJJ$ as appropriate.

		We will first prove (i).
		  For large $n\in \JJJ$ with $n'= (\BISPECIAL_n+1)\in \JJJ'$ we apply Lemma \ref{LemSubwdDens} to see that because $w^{(n)}$ is a subword of $w^{(n')}$,
			  $$
				\DENS(\mu_w ,\nu) \geq \frac{1}{2(K+1)}\DENS(\mu'_{w},\nu),
			  $$
		    where $\mu'_w$ is the measure associated to $w\in \BASEGRAPH'$ from Assumption \ref{ASSUMPTION}.
		   Likewise if we apply Lemma \ref{LemNbringDens}\footnote{While the bounding constant $1/27$ seen here matches that
			    in Corollary \ref{CorSbwdLoopDensSmall},
			    that result cannot be applied as $w'$ is not of the required form.}
			noting that $|w^{(n)}| < |w^{(n')}|$ with $c = Ln$ we
			  have
			  $$
			      \DENS(\mu'_w, \nu) \geq \frac{1}{27} \DENS(\mu_w,\nu).
			  $$
		  Therefore $\DENS(\mu_w,\nu)>0$ if and only if $\DENS(\mu_w',\nu)>0$ and so $\COLOR(w) = \COLOR'(w)$.

		We show (ii) for the vertices $u$ and $u'$, as the other relationship has a very similar proof. Furthermore, the proofs of (iii) and (iv) are similar so we omit them.
		  We may again apply Lemma \ref{LemSubwdDens} to see that if $\COLOR'(u) = \nu$ then $\COLOR(u) = \nu$,
		      as $u^{(n)}$ is a subword of $u^{(n')}$.
		Likewise, if $\COLOR'(u') = \nu'$ then $\COLOR(u) = \nu'$.
		Therefore, $\COLOR'(u)$ and $\COLOR'(u')$ may only take values in the set
			$\{\COLOR(u),\NIL\}$.

		Now suppose $\COLOR(u)=\COLOR(v) = \nu$.
		For each large $n$,
		   let $w^{(1,n+1)},w^{(2,n+1)}\in \LANGUAGE(n+1)$ be the right extensions of $v^{(n)}$.
		These may be uniquely extended to the left until length $n'$, and these are precisely the right extensions
		    of bispecial word $u^{(n'-1)} = v^{(n'-1)}$; that is, the words in $\LANGUAGE(n')$ that relate to $u,u'\in \BASEGRAPH'$.
		Following the proof of Proposition \ref{PropColorCycles}, there exists $j\in \{1,2\}$ so that
			  $$ \DENS(w^{(j,n+1)}, x) \geq \frac{n}{2(n+1)} \DENS(v^{(n)},x).$$
		By Lemma \ref{LemNbringDens}, there exists $\tilde{w}\in \BASEGRAPH'$ such that
			  $$ \DENS(\mu_{\tilde{w}}', \nu) \geq \frac{1}{2(K+2)}\DENS(\mu_v,\nu),$$
		  where $\tilde{w}$ is either $u$ or $u'$ depending on which $k$ satisfies the previous inequality
		      for infinitely many $n$.
		Therefore either $\COLOR'(u) = \nu$ or $\COLOR'(u') = \nu$,
			and the remaining containment has been shown.
	\end{proof}

	\begin{rmk}
	      If the language $\LANGUAGE$ does not satisfy the binary extension
		      condition,
	      then Lemma \ref{LemColorMoves} will still follow by a similar proof.
	      However, the wording will become more complicated.
	\end{rmk}

	\begin{rmk}
	    If $\BASEGRAPH,\BASEGRAPH',\dots, \BASEGRAPH^{(M)}$ are increments of $M$ bispecial steps,
	      then coloring $\COLOR^{(M)}$ on $\BASEGRAPH^{(M)}$ 
		  will be related to the coloring $\COLOR$ on $\BASEGRAPH$
	      by iteratively applying Lemma \ref{LemColorMoves}.
	\end{rmk}

	\subsection{Minimal preimages of $\COLOR$}\label{SsecLoops}

	Consider a shift $\SPACE$ with language $\LANGUAGE$ that satisfies the binary extension condition.
	From Proposition \ref{PropColorCycles}, for each $\nu\in \ERGODICsp$,
		  the preimage set
		$
		      \PRECOLOR(\nu) = \COLOR^{-1}(\nu),
		$
	contains a right special and a left special vertex in $\BASEGRAPH$.
	Here, we consider the case $\#\PRECOLOR(\nu) = 2$.
	$\PRECOLOR(\nu)$ must equal $\{u,v\}$, where $u$ is left special and $v$
		is right special.
	It must also be that $u\to v$ and $v\to u$; otherwise, $\PRECOLOR(\nu)$ would contain more than two vertices.
	We conclude that $u$ and $v$ form a loop in $\BASEGRAPH$ as in Figure \ref{FigLoop1},
	      where $w,z\in \BASEGRAPH$ represent the adjacent vertices with $w\to u,v\to z$.
	\begin{figure}[t]
	    \begin{center}
		    \includegraphics[width=.5\textwidth]{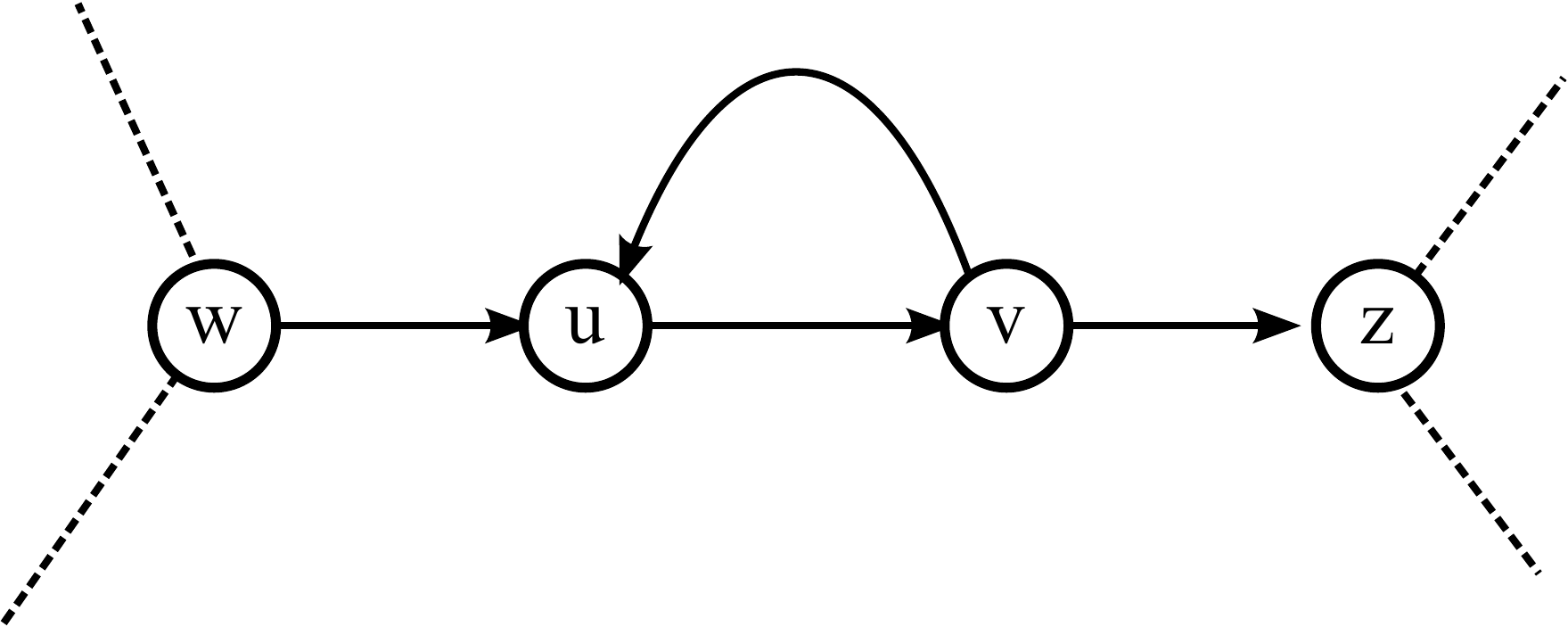}
	    \end{center}
	    \caption{The loop that must occur if $\#\PRECOLOR(\nu)$ is minimal.
		      The dotted edges do not assume a direction.}\label{FigLoop1}
	\end{figure}

	For $n\in \JJJ$, let $W^{(w,u,n)}$ be the word in $\LANGUAGE$ that represents the edge $w\to u$ in $\SPRGRAPH(n)$,
	    meaning $W^{(w,u,n)}$ begins with $w^{(n)}$, ends with
		$u^{(n)}$ and each
	      subword of length $n$ follows in order the simple path from $w^{(n)}$ to $u^{(n)}$ in $\RGRAPH(n)$.
	Define the prefix and suffix by $W^{(w,u,n)} =P^{(w,u,n)}u^{(n)}$ and
		      $W^{(w,u,n)} = w^{(n)}S^{(w,u,n)}$ respectively.
	Define similarly the path words $W$, suffixes $S$ and prefixes $P$ for the other three edges.
	We state the following lemma without proof, as it follows from the definition.
	A \emph{minimal return word} $W\in \LANGUAGE$ from $Y$ to $Z$ is a word so
	      that\footnote{For non-positive index values, we count from the right, i.e.
		$ W_{[i,j]} = W_{[|W| + i, |W| +j]}$
	    for $-|W|<i\leq j \leq 0$.}
	      $W_{[1,|Y|]} = Y$, $W_{[-|Z|+1,0]} = Z$ and $\#_Y(W) = \#_Z(W) = 1$.
	In other words, $W$ begins with $Y$, ends with $Z$ and no proper subword of $W$
	      begins with $Y$ and ends with $Z$.
	
	\begin{lem}\label{LemLoopWds}
	      Let $\JJJ$ satisfy Assumption \ref{ASSUMPTION2} with $u,v,w,z\in \BASEGRAPH$ as in Figure \ref{FigLoop1}.
	      For any $n\in \JJJ$, the each minimal return word in $\LANGUAGE$
		    from $W^{(w,u,n)}_{[1,n+1]}$ to $W^{(v,z,n)}_{[-n,0]}$
		      must be of the form
			      $$
					R_{u\to v,n}(m):= W^{(w,u,n)}\big[S^{(u,v,n)}S^{(v,u,n)}\big]^mS^{(u,v,n)}S^{(v,z,n)}
			      $$
				for some $m\geq 0$.
	\end{lem}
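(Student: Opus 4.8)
The plan is to read a minimal return word from left to right and exploit the symbolic determinism away from right-special words, showing that such a word must traverse the loop a fixed number of times before exiting. First I would record the combinatorial meaning of the endpoints: $Y := W^{(w,u,n)}_{[1,n+1]}$ has length $n+1$ and so is an edge of $\RGRAPH(n)$ — precisely the first edge of the path realizing $w\to u$ in $\SPRGRAPH(n)$ — while $Z := W^{(v,z,n)}_{[-n,0]}$ is the last edge of the path realizing $v\to z$, i.e. the edge of $\RGRAPH(n)$ entering $z^{(n)}$ along that path. The structural input is that every length-$n$ word strictly between two special vertices is non-special and therefore has a unique right extension; hence, by the definition of $\SPRGRAPH(n)$, once a word has been read up to the start of a path toward a special vertex, the remainder of that path is forced until the next special vertex appears.

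Next I would follow the continuation of a word $W$ beginning with $Y$. Reading $Y$ commits $W$ to the first edge of $w\to u$ (whether $w$ is left or right special, $Y$ singles out the step toward $u$), and determinism forces the full prefix $W^{(w,u,n)}$, arriving at $u^{(n)}$. Because every vertex of $\SPRGRAPH(n)$ is left or right special but not both, and $u$ is left special, it has a unique outgoing edge $u\to v$; the continuation is again forced and appends $S^{(u,v,n)}$, reaching $v^{(n)}$. The vertex $v$ is the only branch point met: under the binary extension condition $v$ is right special with exactly two outgoing edges, $v\to u$ and $v\to z$. Taking $v\to u$ forces, through $u$ whose continuation is unique, the block that returns to $v^{(n)}$, while taking $v\to z$ appends $S^{(v,z,n)}$ and thereby completes an occurrence of $Z$. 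Iterating, any word of $\LANGUAGE$ extending $Y$ is a prefix of $R_{u\to v,n}(m)$ for some $m$, and an occurrence of $Z$ arises exactly when the exit $v\to z$ is first chosen.

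Finally I would impose the defining conditions of a minimal return word. Since $W$ ends with $Z$ and $\#_Z(W)=1$, the exit $v\to z$ is taken exactly once and must be the last move; before it, the only admissible choices at $v$ are $m\ge 0$ repetitions of the loop $v\to u\to v$. This gives exactly $W = R_{u\to v,n}(m)$. The condition $\#_Y(W)=1$ is then automatic, since after the initial $w^{(n)}$ the only special vertices visited are $u$ and $v$, so the edge $Y$ out of $w$ is never retraced. I expect the main obstacle to be the bookkeeping for degenerate coincidences among $w,u,v,z$: one must confirm that $w$ and $z$ — although possibly equal to each other — are distinct from $u$ and $v$, which is guaranteed by $\PRECOLOR(\nu)=\{u,v\}$, so that neither $w^{(n)}$ nor $z^{(n)}$, and hence neither $Y$ nor $Z$, can occur inside the loop or along the entry and exit paths, ensuring that the occurrence counts are exactly one.
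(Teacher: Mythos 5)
Your proof is correct and is precisely the argument the paper leaves implicit (the paper states Lemma \ref{LemLoopWds} without proof, remarking that it ``follows from the definition''): since every length-$n$ word strictly between special words has a unique right extension, any word of $\LANGUAGE$ beginning with $Y=W^{(w,u,n)}_{[1,n+1]}$ is forced along the path to $u^{(n)}$, then through the unique out-edge $u\to v$, with the only branching at the right special $v^{(n)}$ (two choices, $v\to u$ or $v\to z$, under the binary extension condition), and the first occurrence of $Z=W^{(v,z,n)}_{[-n,0]}$ coincides with the first choice of the exit, giving $W=R_{u\to v,n}(m)$ with $m$ the number of loop traversals, exactly as you argue. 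One small correction: the distinctness of $w$ and $z$ from $u$ and $v$ is already encoded in the configuration of Figure \ref{FigLoop1} (the two in-edges of $u$ are $w\to u$ and $v\to u$, the two out-edges of $v$ are $v\to u$ and $v\to z$, with $w$ and $z$ genuinely outside the loop), so you need not invoke $\PRECOLOR(\nu)=\{u,v\}$, which is the motivating context of Section \ref{SsecLoops} rather than a hypothesis of the lemma.
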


	\begin{defn}\label{DefLoopTimes}
	      For each $n\in \JJJ$ and loop $u\to v$ in $\BASEGRAPH$ as in Figure \ref{FigLoop1},
		  let
			$$
			      \LOOPTIMES_{u\to v}(n) = \{m\geq 0: R_{u\to v, n}(m)\in \LANGUAGE\}.
			$$
	      When the edge $u\to v$ is assumed, we will suppress this notation in $\LOOPTIMES(n)$.
	\end{defn}
	  Because we are considering a minimal aperiodic $\SPACE$,
	      for every loop $u\to v\in \BASEGRAPH$,
	      every $\LOOPTIMES_{u\to v}(n)$ is finite, although
	    the sizes may tend to infinity as $n\to \infty$.

	\subsection{Bispecial moves on loops}\label{SsecLoopsBS}
	    We consider $u,v,w,z$ for $\BASEGRAPH$ as in Figure \ref{FigLoop1}.
	    Fix $n\in \JJJ$ and let $n'' = |W^{(u,v,n)}|$.
	    The loop will remain in $\SPRGRAPH(n'')$ although bispecial moves may have occurred from $n$ to $n''$ elsewhere.
	    At step $n''$, the bispecial edge $u\to v$ now corresponds to bispecial word
		  $u^{(n'')} = v^{(n'')} = W^{(u,v,n'')}=W^{(u,v,n)}$.
	    The remaining words and prefixes are related in the following:
		  $$
			\begin{array}{rcl}
			      W^{(v,u,n'')} &=& P^{(u,v,n)}W^{(v,u,n)}S^{(u,v,n)},\\
			      P^{(v,u,n'')} &=& P^{(u,v,n)}P^{(v,u,n)},\\
			      S^{(v,u,n'')} &=& S^{(v,u,n)}S^{(u,v,n)},\mbox{ and }\\
			      P^{(u,v,n'')} = S^{(u,v,n'')} &=& \EMPTYWORD,
			\end{array}
		  $$
	    where $\EMPTYWORD$ is the empty word.
		The vertices $w^{(n'')}$
		and $z^{(n'')}$ will be appropriately defined depending on the vertex types and  bispecial moves
		on other edges that involve $w$ and $z$.

	   We will see potentially new local pictures in $\SPRGRAPH(n''+1)$ depending on the finite set
		    $\LOOPTIMES(n'') = \LOOPTIMES(n)$.
	  We now classify these possibilities.
	  First, for Figure \ref{FigLoop1} to occur (to have a loop at all), we must have $\max \LOOPTIMES(n'') \geq 1$;
		that is, the loop must be traversable at least once.
	    Let
		$$u' = W^{(u,v,u,n)}_{[1,n''+1]}, u'' = W^{(u,v,z,n)}_{[1,n''+1]},
		      v' = W^{(u,v,u,n)}_{[-n'',0]}\mbox{ and }v'' = W^{(w,u,v,n)}_{[-n'',0]}.$$
	    Here,
			$$W^{(u,v,u,n)} = W^{(u,v,n)}S^{(v,u,n)}$$
		  represents the path in $\SPRGRAPH(n)$ moving from $u$ to $v$ and then from $v$ to $u$,
	      with similar definitions for $W^{(u,v,z,n)}$ and $W^{(w,u,v,n)}$. 
	      Let $w'$ be the unique special word in $\LANGUAGE(n''+1)$ with special-avoiding path to $v''$ and similarly for $z'$. Then by definition the following paths must occur in $\RGRAPH(n''+1)$:
			  $$%\label{EqNewLoops}
				w' \rightsquigarrow v'',
				u'' \rightsquigarrow z'\mbox{ and }
				u' \rightsquigarrow v'.
			  $$
			  
			  The following cases arise at the bispecial word $u^{(n'')} = v^{(n'')}$ as we move from $\SPRGRAPH(n'')$ to $\SPRGRAPH(n''+1)$:
		\begin{enumerate}
		 \item If $\LOOPTIMES(n'') = \{1\}$, then the move is weak bispecial and the loop becomes an edge.
			  In Figure \ref{FigLoop2a}, the corresponding words $w'$ and $z'$ are the only relevant vertices
			      in $\SPRGRAPH(n''+1)$, as no other words are special.
		  \item If $\LOOPTIMES(n'') = \{0,1\}$, the move is RBS and
			    then there are now two edges of the form $v''\to u''$ as in Figure \ref{FigLoop2b}.
			    The words $u'$ and $v'$ are not special.
		  \item If $0\notin \LOOPTIMES(n'')$ and $\max \LOOPTIMES(n'') >1$, the move on $u\to v$ is RBS and
			    results in another loop about $u'$ and $v'$ as in Figure \ref{FigLoop2c}.
			    The words $u''$ and $v''$ are not special.
			  Note that
				$$
				      \LOOPTIMES_{u'\to v'}(n''+1) = \{m-1: m\in \LOOPTIMES(n'')\}.
				$$
		  \item If $0\in \LOOPTIMES(n'')$ and $\max \LOOPTIMES(n'') >1$,
			  then the loop $u' \to v' \to u'$ is still present, while the other vertices form edges
				  $w' \to v''$, $v'' \to u'$, $v'\to u''$, $v''\to u''$ and $u''\to z'$ as
			    indicated in Figure \ref{FigLoop2d}.
			  Similarly,
				$$
				      \LOOPTIMES_{u'\to v'}(n''+1) = \{m-1: m\in \LOOPTIMES(n''), m\neq 0\}.
				$$
		\end{enumerate}
      \begin{figure}[t]
	    \begin{center}
		\subfigure[Weak bispecial move.]{\label{FigLoop2a}
		      \includegraphics[width=.5\textwidth]{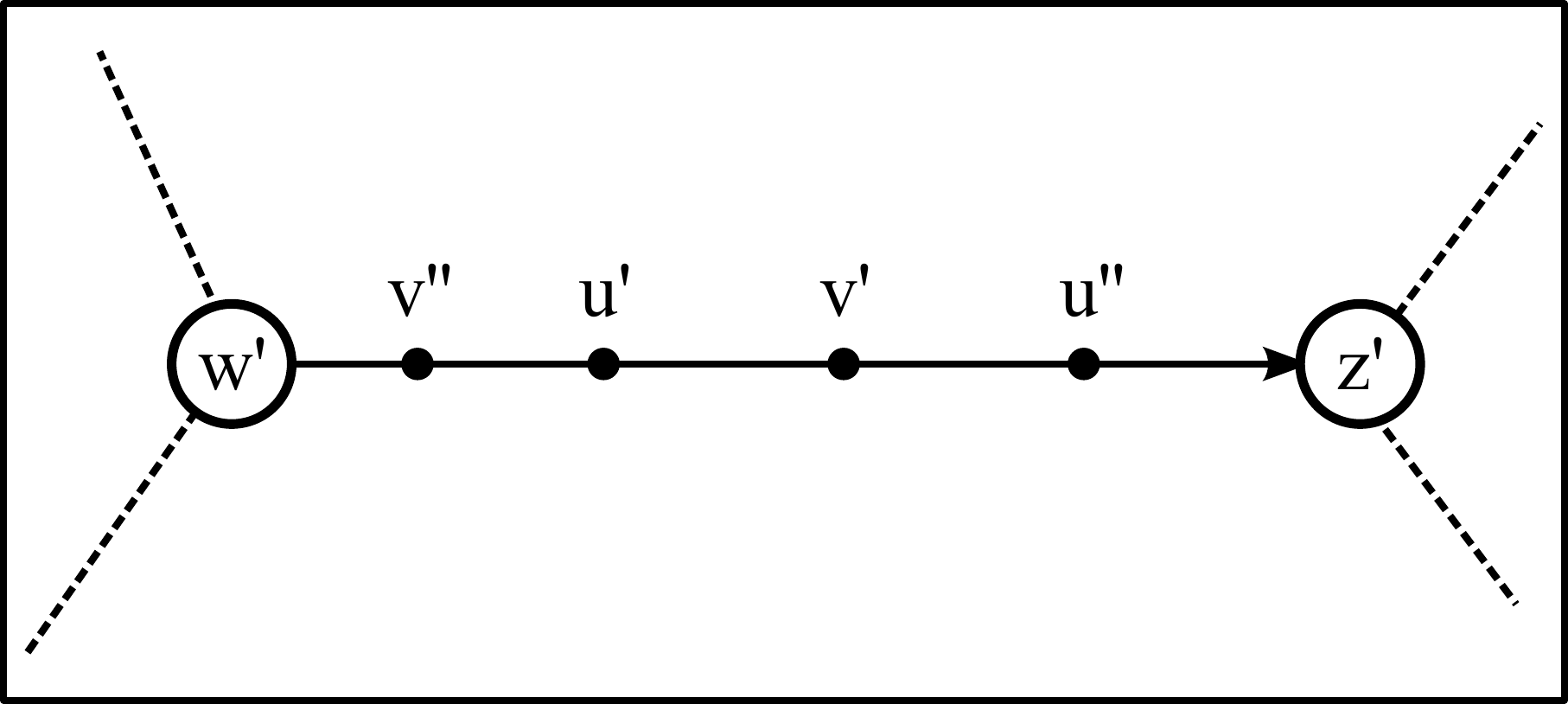}}
	    \end{center}
	    \begin{center}
		\subfigure[Regular bispecial move, removing the loop.]{\label{FigLoop2b}
		      \includegraphics[width=.5\textwidth]{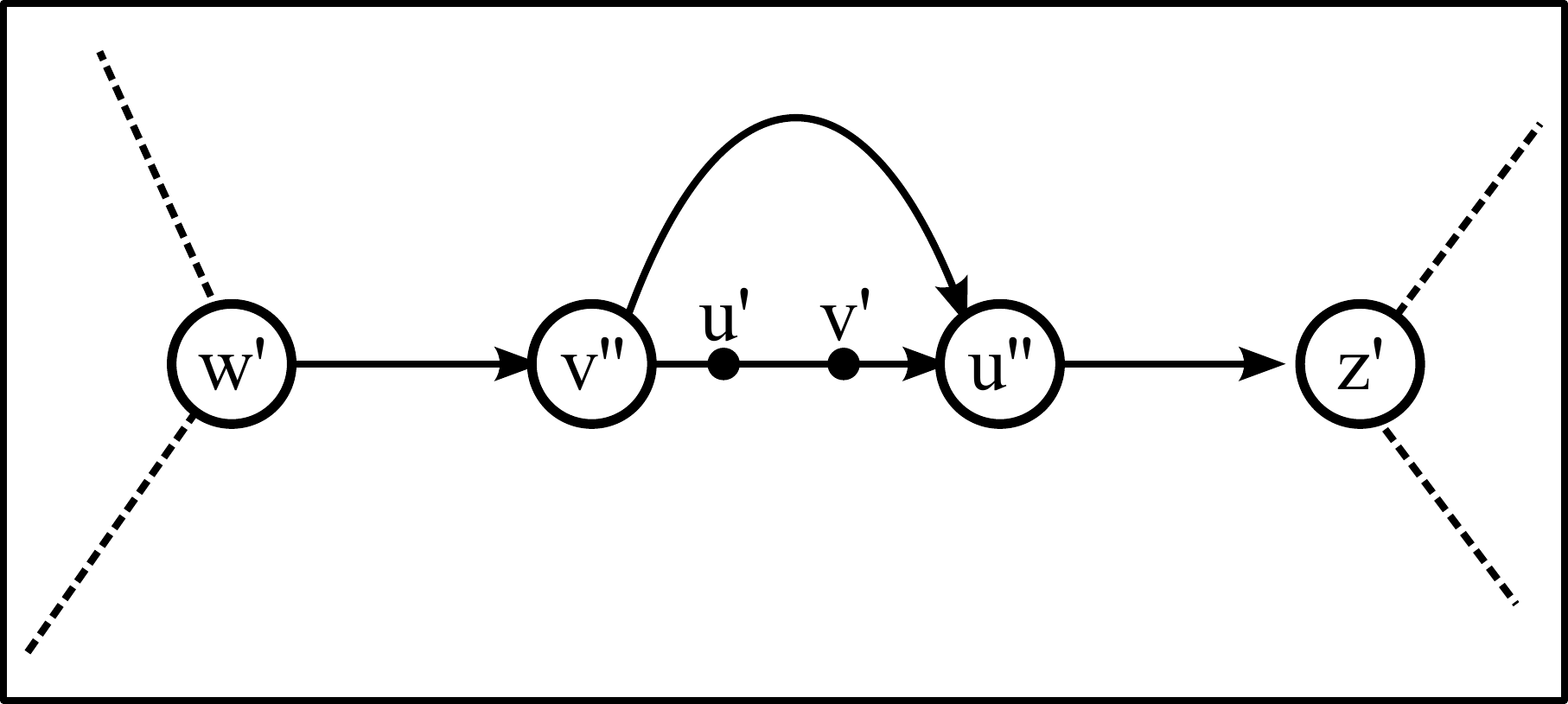}}
	    \end{center}
	    \begin{center}
		\subfigure[Regular bispecial move, maintaining the loop.]{\label{FigLoop2c}
		      \includegraphics[width=.5\textwidth]{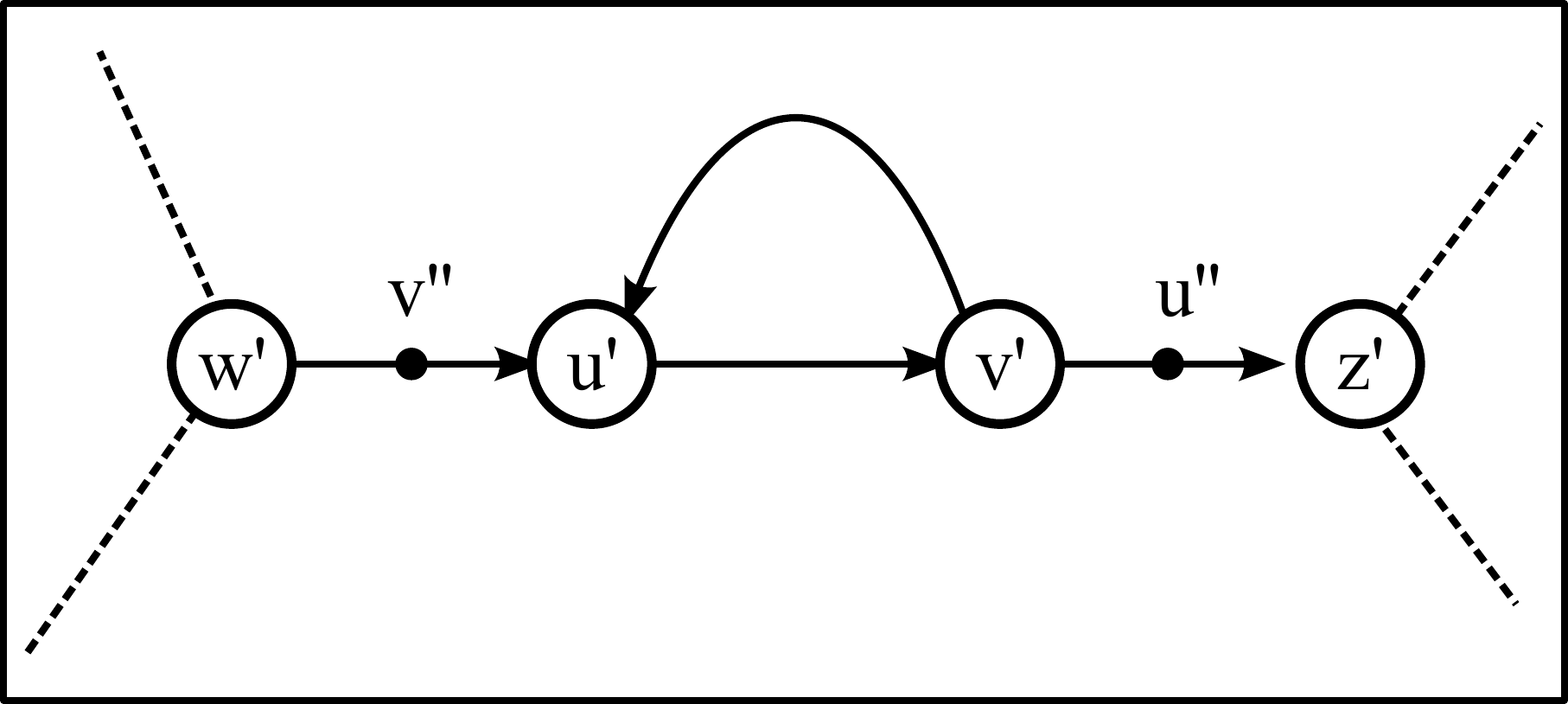}}
	    \end{center}
	    \begin{center}
		\subfigure[Strong bispecial move. The new shape is a loop atop a ``tower.'']{\label{FigLoop2d}
		      \includegraphics[width=.5\textwidth]{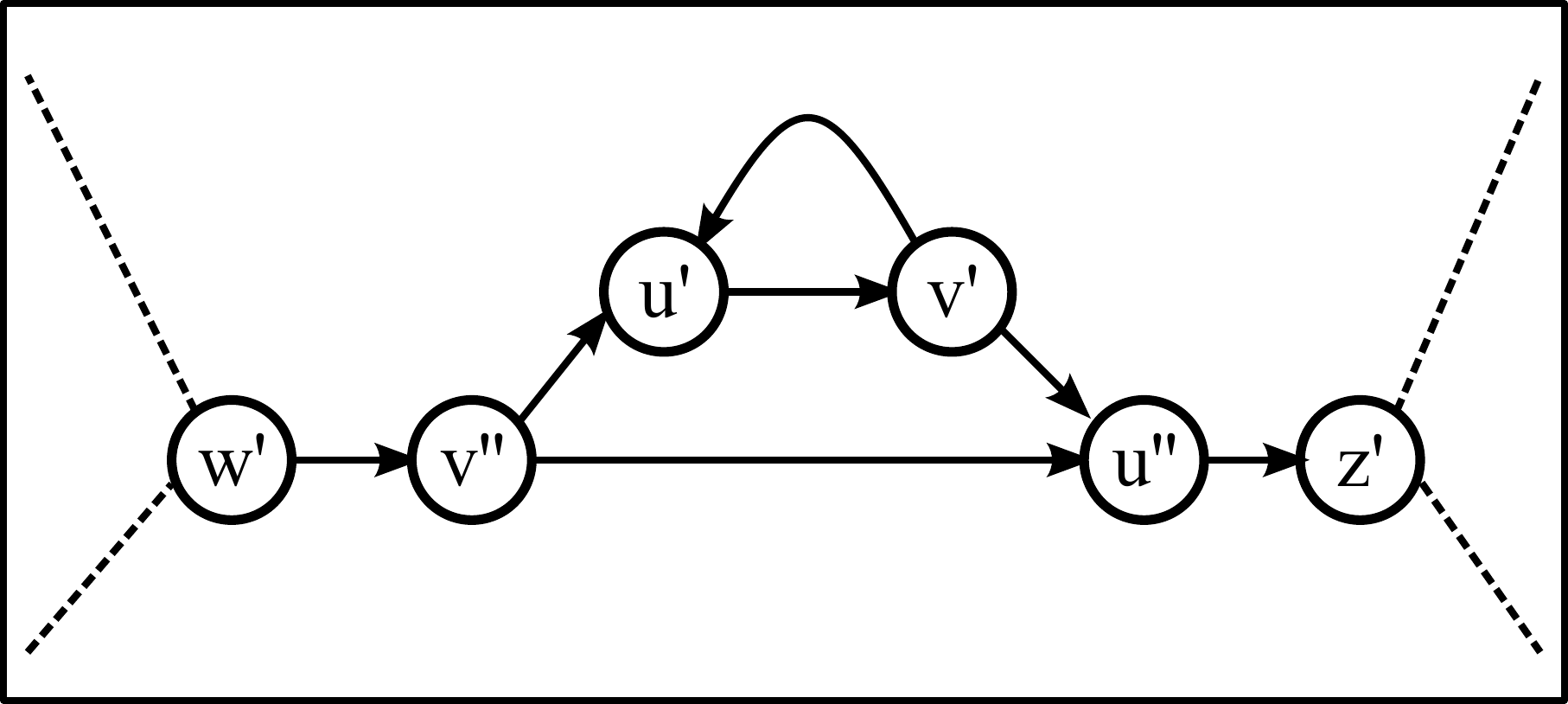}}
	    \end{center}
	    \caption{The types of bispecial moves on $u\to v$ from $\SPRGRAPH(n')$ in Figure \ref{FigLoop1} to $\SPRGRAPH(n'+1)$.
		      Circled nodes are vertices, while dot nodes are not.}\label{FigLoop2}
      \end{figure}

	\subsection{Coloring for loops}\label{SsecColoringLoops}
	  We will now discuss how the changes in the previous section affect colorings.
	  The first result says the following: if $\PRECOLOR(\nu)$ has only two elements, then the maximum number of windings
		about the corresponding loop must grow to infinity as $n\in \JJJ$ goes to infinity.
		
	  \begin{lem}
		If for $\nu\in \ERGODICsp$, $\PRECOLOR(\nu) = \{u,v\}$ with $u$ and $v$ as in Figure \ref{FigLoop1},
			  then $\lim_{\JJJ\ni n \to\infty} \max \LOOPTIMES(n) = \infty$.
	  \end{lem}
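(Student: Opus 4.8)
The plan is to argue by contradiction. Suppose $\max\LOOPTIMES(n)$ does not tend to infinity; then there is an integer $M$ and an infinite subset $\JJJ''\subseteq\JJJ$ with $\max\LOOPTIMES(n)\le M$ for every $n\in\JJJ''$. Writing $x=x^{(\nu)}$ for the fixed generic point of $\nu$, I will show that this uniform winding bound forces $\DENS(\mu_z,\nu)>0$ for the exit vertex $z$ of Figure \ref{FigLoop1}. Since $z\notin\{u,v\}=\PRECOLOR(\nu)$, Definition \ref{DefColor} together with well-definedness of the coloring would then give $\COLOR(z)=\nu$, i.e. $z\in\PRECOLOR(\nu)$, a contradiction.

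The geometric heart of the argument is the claim that, for each $n\in\JJJ''$, every occurrence of $v^{(n)}$ in $x$ lies within a bounded distance $c(n)=O(n)$ of a subsequent occurrence of $z^{(n)}$, uniformly in $n\in\JJJ''$. Indeed, since $x$ belongs to a minimal aperiodic system the loop cannot be wound indefinitely, so any maximal run of the loop in $x$ occurs inside some minimal return word $R_{u\to v,n}(m)$ as in Lemma \ref{LemLoopWds}; as this word belongs to $\LANGUAGE$ we have $m\in\LOOPTIMES(n)$, hence $m\le M$. Reading forward from any occurrence of $v^{(n)}$ we reach the exit $z^{(n)}$ before the end of its containing return word, so at distance at most $|R_{u\to v,n}(M)|$ (a length bound, with the winding part contributing at most $M$ copies). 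Because every loop edge has weight at most $\COMPLEXITY(n+1)\le(K+1)n$ for large $n$, and $R_{u\to v,n}(M)$ is a concatenation of $O(M)$ such edge-paths, we obtain $|R_{u\to v,n}(M)|\le c(n)$ with $c(n)\le(2M+3)(K+1)n$, establishing the claim (at most the initial occurrence of $v^{(n)}$ need be excluded, absorbed into the parameter $p_0$).

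With the claim in hand, I apply Lemma \ref{LemNbringDens} taking its $w=v^{(n)}$ and $u=z^{(n)}$, with the constant $c(n)$ above and $L=K+1$. Since $|z^{(n)}|=|v^{(n)}|=n$, the first case of the lemma applies and yields $\DENS(z^{(n)},x)\ge C\,\DENS(v^{(n)},x)$ for $C=\frac{n}{3n+3c(n)/L}\ge\frac{L}{3L+3(2M+3)(K+1)}>0$, a positive constant independent of $n\in\JJJ''$. Passing to the limit along $\JJJ''$ and using part (d) of Assumption \ref{ASSUMPTION2} (which realizes each disjoint density as a genuine limit along all of $\JJJ$, so the subsequential limit agrees), I obtain $\DENS(\mu_z,\nu)\ge C\,\DENS(\mu_v,\nu)$. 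As $\COLOR(v)=\nu$ gives $\DENS(\mu_v,\nu)>0$, this forces $\DENS(\mu_z,\nu)>0$, completing the contradiction.

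The main obstacle is the uniform distance estimate of the second paragraph: one must be sure that the winding bound $M$ combined with the $O(n)$ bound on loop-edge weights produces a distance $c(n)$ that is genuinely $O(n)$, so that the first-case constant in Lemma \ref{LemNbringDens} stays bounded away from zero as $n\to\infty$ along $\JJJ''$. The remaining steps—identifying maximal loop runs with the return words $R_{u\to v,n}(m)$ and transferring densities to the limit via Assumption \ref{ASSUMPTION2}—are routine once this estimate is secured.
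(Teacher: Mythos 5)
Your proof is correct and follows essentially the same route as the paper's: assume a uniform winding bound $M$, use it together with the $O(Ln)$ bound on path weights to show a neighboring vertex of the loop occurs within distance $O_M(Ln)$ of every occurrence of a loop vertex, and apply Lemma \ref{LemNbringDens} with $L=K+1$ to transfer positive density to that neighbor, contradicting $\PRECOLOR(\nu)=\{u,v\}$. The only differences are cosmetic: you read forward to the exit vertex $z$ where the paper first reads backward to the entry vertex $w$ (and then says ``likewise'' for $z$), and you treat the negation of the limit slightly more carefully, bounding only along an infinite subset $\JJJ''\subseteq\JJJ$ and invoking Assumption \ref{ASSUMPTION2}(d) to pass densities to the subsequential limit, whereas the paper assumes the bound on the whole tail.
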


	  \begin{proof}
	      Suppose by contradiction that for some $n_0$ and $M$,
			      $$
			      		m\leq M \mbox{ for all } m\in \LOOPTIMES(n) \mbox{ and } n\geq n_0, n\in \JJJ.
			      $$
	      Fix $n \geq N_0$ and recall the generic $x$ for $\nu$.
	      Because the paths $u\to v \to u$ and $w\to u \to v$ in $\SPRGRAPH(n)$ have total length at most $Ln$,
			if $u^{(n)}$ occurs in position $p$ in $x$, then $w^{(n)}$ must occur in position $p'<p$ with $p-p' < (M+1)Ln$.
		By Lemma \ref{LemNbringDens}, this implies that $\DENS(\mu_w,\nu) \geq \frac{1}{3(M+2)} \DENS(\mu_u,\nu)>0$.
		But this is a contradiction because then $w \in \PRECOLOR(\nu)$.
		We may likewise show by contradiction that $z\in \PRECOLOR(\nu)$.
	  \end{proof}

	  The preceding proof yields the following natural converse.

	  \begin{cor}\label{CorBddLoopCount}
		If for $\nu\in \ERGODICsp$, $u,v\in \PRECOLOR(\nu)$ with $u$ and $v$ as in Figure \ref{FigLoop1} and
			  $\limsup_{\JJJ\ni n \to\infty} \max \LOOPTIMES(n) <\infty$, then
			$$
			      u,v,w,z \in \PRECOLOR(\nu),
			$$
		where $w$ and $z$ are the neighboring vertices to the loop.
	  \end{cor}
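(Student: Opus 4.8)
The plan is to reuse, essentially verbatim, the density estimate from the proof of the preceding lemma, now reading its conclusion as the desired positive statement rather than as a contradiction. The first step is to convert the $\limsup$ hypothesis into a uniform bound: since $\limsup_{\JJJ \ni n \to \infty} \max \LOOPTIMES(n) < \infty$, there exist $M \in \NN$ and $n_0$ such that $\max \LOOPTIMES(n) \leq M$ for every $n \geq n_0$ in $\JJJ$. This is the only structural input about the loop that the argument needs.

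Next, fix the generic point $x = x^{(\nu)}$ for $\nu$ and fix $n \geq n_0$ in $\JJJ$. The key geometric observation is that, because $u$ is left special with incoming edges coming only from $w$ and from $v$, while $v$ is right special whose unique incoming edge comes from $u$, tracing $x$ backward from any occurrence of $u^{(n)}$ forces one to pass repeatedly around the loop $u \to v \to u$ until finally exiting through $w$. Each such pass, together with the entering segment $w \to u \to v$, corresponds to a path in $\RGRAPH(n)$ of length at most $Ln$ (recall $L = K+1$), and the bound $\max \LOOPTIMES(n) \leq M$ limits the number of passes to at most $M$. Hence every occurrence of $u^{(n)}$ at a position $p > p_0$ is preceded by an occurrence of $w^{(n)}$ at some position $p'$ with $p - p' < (M+1)Ln$.

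I would then invoke Lemma \ref{LemNbringDens}, counting occurrences of $u^{(n)}$ and using the guaranteed nearby occurrences of $w^{(n)}$, with $c = (M+1)Ln$. Since $|u^{(n)}| = |w^{(n)}| = n$, the relevant constant is $\frac{n}{3n + 3c/L} = \frac{1}{3(M+2)}$, so $\DENS(w^{(n)}, x) \geq \frac{1}{3(M+2)} \DENS(u^{(n)}, x)$ for every $n \geq n_0$. Passing to the limit, which is a genuine limit rather than merely a $\limsup$ by Assumption \ref{ASSUMPTION2}(d), gives $\DENS(\mu_w, \nu) \geq \frac{1}{3(M+2)} \DENS(\mu_u, \nu)$. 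Because $u \in \PRECOLOR(\nu)$ we have $\DENS(\mu_u, \nu) > 0$, whence $\DENS(\mu_w, \nu) > 0$ and therefore $w \in \PRECOLOR(\nu)$. The mirror-image argument, chasing $x$ forward from occurrences of $v^{(n)}$ through the only outgoing edges $v \to u$ and $v \to z$ until the walk exits the loop through $z$, yields $z \in \PRECOLOR(\nu)$ in exactly the same way.

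The one step I would be most careful about is the geometric bookkeeping that turns the bound on $\max \LOOPTIMES(n)$ into the uniform distance bound $(M+1)Ln$: one must check that the loop is genuinely the only mechanism by which $x$ could fail to return quickly to $w$ (respectively advance quickly to $z$), which relies precisely on $u$ and $v$ being unispecial with the local edge structure of Figure \ref{FigLoop1}, and that $L = K+1$ indeed dominates each relevant path length. Once that is pinned down, the remainder is a direct citation of Lemma \ref{LemNbringDens} and the definition of $\COLOR$.
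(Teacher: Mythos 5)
Your proposal is correct and is essentially the paper's own argument: the paper proves this corollary simply by noting that ``the preceding proof yields'' it, i.e.\ by unwinding the contradiction in the preceding lemma into a direct statement, exactly as you do --- the uniform bound $M$ from the finite $\limsup$, the distance bound $(M+1)Ln$ from the loop structure of Figure \ref{FigLoop1}, and Lemma \ref{LemNbringDens} with the same constant $\frac{1}{3(M+2)}$, mirrored for $z$. No gaps; your care about the geometric bookkeeping matches the (terser) reasoning in the paper's lemma proof.
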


	  In the rest of the paper, when we transition from a special graph at stage $n''$ to $n''+1$,
		we would like ensure that $\SPRGRAPH(n'' + 1) \not\equiv \SPRGRAPH(n'')$.
	  Suppose $\SPRGRAPH(n'')$ contains at least one loop as in Figure \ref{FigLoop1}
		that will undergo a bispecial move.
	  If $\SPRGRAPH(n''+1) \equiv \SPRGRAPH(n'')$, then all such loops must have experienced
	      an RBS move as in Figure \ref{FigLoop2c}.
	  Thus, $\min \LOOPTIMES(n'') >0$ and $\max \LOOPTIMES(n'')>1$
		    for all such loops.
	  However, note that after the move, the new set $\LOOPTIMES(n''+1)$ satisfies
		  $$
		   \min \LOOPTIMES(n''+1) = \min \LOOPTIMES(n'') - 1
		   			\mbox{ and }
		   	\max \LOOPTIMES(n''+1) = \max \LOOPTIMES(n'') - 1.
		  $$
	  Therefore, we can for a fixed loop choose $n''' = n'' + m_0 a$ where $a$ is the total weight of the loop $u\to v \to u$; that is,
		   $a = |S^{(u,v,n'')}| + |S^{(v,u,n'')}|$,
	   and $m_0 = \min \LOOPTIMES(n'')$.
	    Then $\min \LOOPTIMES(n''') = 0$.
	    If $\LOOPTIMES(n''') = \{0\}$, then we no longer have a loop in $\SPRGRAPH(n''')$, as the bispecial move before $n'''$ was weak bispecial.
	    If $\LOOPTIMES(n''') = \{0,1\}$, then the move at $\BISPECIAL(n''')$ will be the regular bispecial move that
				removes the loop as in Figure \ref{FigLoop2b}.
	    Otherwise the move will be the strong bispecial move as in Figure \ref{FigLoop2d}.
	    For all $n''\leq \tilde{n} \leq n'''$, the loop will persist.
	    
	    For the next lemma, recall that $L = K+1$.

	    \begin{lem}
		  Let $n''$ be so that $\SPRGRAPH(n'')$ has loop about $u^{(n'')}=v^{(n'')}$ as in Figure \ref{FigLoop1},
		    meaning in particular that $u^{(n'')} = v^{(n'')}$ is bispecial.
		   If $n''' = n'' + a b$,
		      where $a$ is the length of the loop $u^{(n'')} \to v^{(n'')} \to u^{(n'')}$ and $b \leq \min \LOOPTIMES(n'')$,
		  then
			$$
			      \DENS(u^{(n'')},x) \geq \frac{1}{12(K+1)}\DENS(u^{(n''')},x)
			      	\mbox{ and }
			      \DENS(u^{(n''')},x) \geq \frac{1}{27}\DENS(u^{(n'')},x)
			$$
		  for any $x$.
	    \end{lem}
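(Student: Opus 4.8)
The plan is to reduce the statement to Lemma~\ref{LemSubwdLoopDens} by identifying the short word $v = u^{(n'')}$ and the long word $\tilde v = u^{(n''')}$. Since $u$ is left special, its representative grows to the right as the length increases, and because the bispecial edge $u\to v$ has weight $0$ at stage $n''$ (so $S^{(u,v,n'')} = \EMPTYWORD$), a single traversal of the loop $u^{(n'')}\to v^{(n'')}\to u^{(n'')}$ merely appends the block $S^{(v,u,n'')}$ of length $a$. Thus $u^{(n''')} = u^{(n'')}\big(S^{(v,u,n'')}\big)^b$, and since each loop traversal returns to the bispecial word, $u^{(n'')}$ reappears as a suffix of every initial block $u^{(n'')}\big(S^{(v,u,n'')}\big)^j$. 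This immediately yields conditions (1) and (2) of Lemma~\ref{LemSubwdLoopDens}: $|u^{(n''')}| = n'' + ab = |u^{(n'')}| + ab$, and $u^{(n'')}$ begins at each position $1+ja$, $0\leq j\leq b$, of $u^{(n''')}$.

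The main work is verifying condition (3): every occurrence of $u^{(n'')}$ in $x$ is contained in an occurrence of $u^{(n''')}$. Here I would invoke the hypothesis $b \leq \min\LOOPTIMES(n'')$. By the loop structure (Figure~\ref{FigLoop1}) and minimality, every occurrence of the bispecial word $u^{(n'')}$ in $x$ sits inside an occurrence of a minimal return word $R_{u\to v,n''}(m)$, and by Definition~\ref{DefLoopTimes} this $m$ lies in $\LOOPTIMES(n'')$, hence $m \geq \min\LOOPTIMES(n'') \geq b$. Within such a return word, $u^{(n'')}$ occurs at the consecutive positions indexed $j=0,1,\dots,m$ (spaced $a$ apart). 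A copy of $u^{(n''')}$ spanning the positions indexed $i,i+1,\dots,i+b$ occurs in $x$ for every $0\leq i\leq m-b$; taking $i = \max(0,\,j-b)$ shows the occurrence indexed $j$ is covered, since then $i\leq j\leq i+b$ and $0\leq i\leq m-b$ (using $j\leq m$ and $m\geq b$). This establishes condition (3), with containment allowed to extend on either side.

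With all three conditions verified, Lemma~\ref{LemSubwdLoopDens} gives $\DENS(u^{(n''')},x) \geq \frac{1}{27}\DENS(u^{(n'')},x)$ directly, together with $\DENS(u^{(n'')},x) \geq \frac{1}{4L + 8a/n''}\DENS(u^{(n''')},x)$. To recover the stated constant I would bound the loop length: $a$ is the weight of the edge $v\to u$, i.e. the length of a path through non-special words in $\RGRAPH(n'')$, so $a \leq \COMPLEXITY(n''+1) = K(n''+1)+C \leq (K+1)n''$ for all large $n''$. Recalling $L = K+1$, we get $a/n'' \leq L$, whence $4L + 8a/n'' \leq 12(K+1)$ and the first bound follows. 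The main obstacle is the bookkeeping in condition (3): once one observes that containment may use extension on either side, the constraint $b \leq \min\LOOPTIMES(n'')$ forces every loop region to wind at least $b$ times and makes the covering automatic, leaving only the routine constant estimate.
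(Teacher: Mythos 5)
Your proof is correct and takes essentially the same route as the paper: the paper likewise deduces condition (3) of Lemma \ref{LemSubwdLoopDens} from $b \leq \min\LOOPTIMES(n'')$ and then invokes Corollary \ref{CorSbwdLoopDensSmall} with $\alpha = K+1$, which is precisely your explicit estimate $a \leq \COMPLEXITY(n''+1) \leq (K+1)n''$ combined with $L=K+1$ to yield $4L+8\alpha = 12(K+1)$. The only difference is that the paper's two-sentence proof leaves the verification of conditions (1)--(2), the return-word covering argument, and the weight bound implicit, whereas you spell them out.
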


	    \begin{proof}
		  Because $b \leq \min \LOOPTIMES(n'')$, each occurrence of $u^{(n'')}$ is contained in an occurrence of $u^{(n''')}$.
		  So we apply Corollary \ref{CorSbwdLoopDensSmall} with $\alpha = K+1$.
	    \end{proof}

	    For a loop in $\SPRGRAPH(n)$, let $n'' \geq n$ be the minimum value such that $u^{(n'')} = v^{(n'')}$.
	    Furthermore, let $n''' = n'' + ab$ as in the lemma with $b$ taken to be the maximum such value
	      so that the loop remains in $\SPRGRAPH(n''')$; that is, $b$ is the minimum of $\min\LOOPTIMES(n'')$
		  and $\max\LOOPTIMES(n'')-1$.
	    If there are multiple loops, then let $n'''$ be the minimum of all such values.
	    We now choose $\JJJ''$ from $n'''$ for each $n\in \JJJ$ so that
		   $\SPRGRAPH(n''') \equiv \BASEGRAPH''$, $\SPRGRAPH(n'''+1) \equiv \BASEGRAPH'''$,
	      and $\JJJ''$ with $\BASEGRAPH''$ satisfies Assumption \ref{ASSUMPTION2}.
	    We then reduce $\JJJ$ so that $\JJJ = \{n_1,n_2,\dots\}$ and $\JJJ'' = \{n_1''',n_2''',\dots\}$
		    satisfy
		    $$ n_1<n_1'''<n_2<n_2'''<n_3<n_3'''<\dots.$$
	    Note that $\BASEGRAPH''\not\equiv \BASEGRAPH '''$.
	    The next result tells us that colors of loops in $\BASEGRAPH$ persist to $\BASEGRAPH''$.
	    
	    \begin{cor}\label{CorAccelLoops}
		With $\BASEGRAPH$ and $\BASEGRAPH''$ as above,
			for each $u$ and $v$ associated to a loop
			  $$
			      \COLOR(u) = \COLOR(v) = \COLOR''(u)= \COLOR''(v)
			  $$
		where $\COLOR$ is the coloring relation for $\BASEGRAPH$ and $\COLOR''$
			is the coloring relation for $\BASEGRAPH''$.
	    \end{cor}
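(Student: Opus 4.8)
The plan is to reduce the corollary to a two-sided comparison of disjoint densities whose constants do not degenerate as $\JJJ \ni n \to \infty$. Fix $\nu \in \ERGODICsp$ and its generic point $x = x^{(\nu)}$, and let $u,v$ be the loop vertices of $\BASEGRAPH$, with $u$ left special and $v$ right special; write $u,v$ also for their loop descendants in $\BASEGRAPH''$, which again form a loop by our choice of $n'''$. Since $v$ has $u$ as its unique in-neighbor and $u$ has $v$ as its unique out-neighbor in each of $\BASEGRAPH$ and $\BASEGRAPH''$, Proposition \ref{PropColorCycles}(ii)--(iii) already forces $\COLOR(u)=\COLOR(v)$ and $\COLOR''(u)=\COLOR''(v)$. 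Hence it suffices to prove, for the single vertex $u$, that $\DENS(\mu_u,\nu)>0$ if and only if $\DENS(\mu''_u,\nu)>0$; because $\COLOR$ assigns at most one ergodic measure to each vertex, this yields $\COLOR(u)=\COLOR''(u)$ and the four-fold equality follows.

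Next I would route the comparison through the intermediate length $n''$ at which the loop edge becomes the bispecial word $u^{(n'')}=v^{(n'')}=W^{(u,v,n)}$. Two observations make the step from $n$ to $n''$ harmless. First, $u^{(n)}$ is a prefix of $u^{(n'')}$, so Lemma \ref{LemSubwdDens} gives $\DENS(u^{(n)},x)\ge \tfrac{n}{2n''}\,\DENS(u^{(n'')},x)$; since $n''-n=\rho_n(u\to v)\le \COMPLEXITY(n+1)=K(n+1)+C$, the ratio $n''/n$ is bounded and this constant stays uniformly positive for large $n\in \JJJ$. Second, because $u$ is unispecial its unique forward continuation through non-special words spells out exactly $W^{(u,v,n)}$ before reaching the next right special word $v$, so every occurrence of $u^{(n)}$ in $x$ begins an occurrence of $u^{(n'')}$ at the same position. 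Applying Lemma \ref{LemNbringDens} with $w=u^{(n)}$, $u=u^{(n'')}$ and $c=1$ (the case $|u^{(n'')}|>|u^{(n)}|$) then gives $\DENS(u^{(n'')},x)\ge C\,\DENS(u^{(n)},x)$ with $C$ bounded below independently of $n$. Thus $\DENS(u^{(n)},x)$ and $\DENS(u^{(n'')},x)$ are comparable with uniform constants.

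For the remaining step from $n''$ to $n'''$ I would invoke the preceding lemma verbatim, which supplies $\DENS(u^{(n'')},x)\ge \tfrac{1}{12(K+1)}\DENS(u^{(n''')},x)$ and $\DENS(u^{(n''')},x)\ge\tfrac{1}{27}\DENS(u^{(n'')},x)$; these constants are again uniform in $n$. Chaining the three comparisons produces fixed constants $c,c'>0$ with $\DENS(u^{(n)},x)\ge c\,\DENS(u^{(n''')},x)$ and $\DENS(u^{(n''')},x)\ge c'\,\DENS(u^{(n)},x)$ for all large $n\in\JJJ$. Taking the limit along $\JJJ$, using Assumption \ref{ASSUMPTION2}(d) to read each $\DENS$ as a genuine limit and the bijection $n\mapsto n'''$ between $\JJJ$ and $\JJJ''$, we obtain $\DENS(\mu_u,\nu)\ge c\,\DENS(\mu''_u,\nu)$ and $\DENS(\mu''_u,\nu)\ge c'\,\DENS(\mu_u,\nu)$, so positivity of one density is equivalent to positivity of the other, as required. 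The main obstacle is precisely the choice of route: a naive subword bound from $n$ directly to $n'''$ would contribute a constant of order $n/n'''$, which can vanish because many windings make $n'''-n''$ arbitrarily large; the point of passing through $n''$ and using the winding-aware lemma (built on Corollary \ref{CorSbwdLoopDensSmall} with $\alpha=K+1$) is that the loop length $a$ stays comparable to $|u^{(n'')}|$, keeping every constant uniform.
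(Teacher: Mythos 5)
Your proposal is correct and follows the route the paper intends: Corollary~\ref{CorAccelLoops} is stated without proof as an immediate consequence of the preceding lemma, and your argument --- Proposition~\ref{PropColorCycles}(ii)--(iii) to equate colors within each loop, the prefix bound of Lemma~\ref{LemSubwdDens} together with the unique forward continuation through non-special words (Lemma~\ref{LemNbringDens} with $c=1$) for the step from $n$ to $n''$, the winding lemma built on Corollary~\ref{CorSbwdLoopDensSmall} for the step from $n''$ to $n'''$, and Assumption~\ref{ASSUMPTION2}(d) with the bijection $\JJJ\to\JJJ''$ to pass to limits --- fills in exactly the details the paper leaves implicit, with all constants uniform in $n$ as required. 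The only minor imprecision is that when several loops are present the paper takes $n'''$ to be the minimum over all loops, so for a loop not achieving the minimum the difference $n'''-n''$ need not be an exact multiple of that loop's length $a$ and the lemma does not apply verbatim; this is repaired trivially, since $u^{(n''')}$ is a prefix of the word at the next full winding $u^{(n''+ab)}$ with length deficit less than $a<n'''$, so Lemma~\ref{LemSubwdDens} bridges the gap with another uniform constant.
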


	  \section{Proof of Main Theorem}\label{SecMainProof}

	      We will first prove Theorem \ref{ThmMain} under the  binary extension condition.

	      \begin{prop}\label{PropMainBin}
		  If minimal shift $\SPACE$ on finite $\ALPHABET$ satisfies equation \eqref{EqFixedKComp} with $K \geq 4$ and its
			language satisfies the binary extension condition,
		  then $\#\ERGODICsp \leq K-2$.
	      \end{prop}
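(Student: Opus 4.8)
The plan is to combine the vertex-counting of Corollary \ref{CorDamronsStar} with the loop machinery of Section \ref{SsecColoringLoops}. First I would invoke Assumption \ref{ASSUMPTION2}: since $\LANGUAGE$ satisfies the binary extension condition and has constant growth $K$, the base graph $\BASEGRAPH$ has exactly $K$ left special and $K$ right special vertices, hence $2K$ vertices in total. Write $E = \#\ERGODICsp$ and let $\COLOR$ be the coloring on $\BASEGRAPH$. If some $\nu$ has $\#\PRECOLOR(\nu) > 4$, or if $\#\PRECOLOR(\NIL) \geq 3$, then Corollary \ref{CorDamronsStar} already gives $\#\ERGODICsp \leq K-2$. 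So I may assume throughout that every $\#\PRECOLOR(\nu) \leq 4$ and $\#\PRECOLOR(\NIL) \leq 2$.

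Under these assumptions the count becomes rigid. By Proposition \ref{PropColorCycles} each color occupies at least one left special and one right special vertex, so $\#\PRECOLOR(\nu) \geq 2$; since the sets $\PRECOLOR(\nu)$ and $\PRECOLOR(\NIL)$ partition the $2K$ vertices,
\[
2E \;\leq\; \sum_{\nu \in \ERGODICsp} \#\PRECOLOR(\nu) \;=\; 2K - \#\PRECOLOR(\NIL) \;\leq\; 2K .
\]
If $E = K$ then $\#\PRECOLOR(\NIL) = 0$ and $\#\PRECOLOR(\nu) = 2$ for every $\nu$, so $\BASEGRAPH$ is a disjoint union of $K$ loops as in Figure \ref{FigLoop1}. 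If $E = K-1$, the excess $\sum_\nu \#\PRECOLOR(\nu) - 2E = 2 - \#\PRECOLOR(\NIL) \leq 2$ can be absorbed by at most two colors, so all but a bounded number of vertices again lie on two-vertex loops. Thus to prove the proposition it suffices to rule out $E \in \{K-1, K\}$, and in both of these cases $\BASEGRAPH$ is, up to a bounded number of free vertices, a union of loops.

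The remaining task is to convert this static picture into a contradiction using the dynamics. For each loop color $\PRECOLOR(\nu) = \{u,v\}$, the lemma preceding Corollary \ref{CorBddLoopCount} forces $\max \LOOPTIMES_{u \to v}(n) \to \infty$ (otherwise the neighbors of the loop would be colored $\nu$, contradicting $\#\PRECOLOR(\nu) = 2$). I would then accelerate through these loops with Corollary \ref{CorAccelLoops}, which preserves the color of every loop while advancing to a graph in which some loop must undergo a genuine bispecial move---weak, regular-removing, or strong---as classified in Section \ref{SsecLoopsBS}. By Lemma \ref{LemColorMoves} together with the color-spreading of Proposition \ref{PropColorCycles}, each such move either enlarges some $\PRECOLOR(\nu)$ or produces an uncolored vertex, while a strong move stacks the loop atop a tower (Figure \ref{FigLoop2d}), building the consecutive-loop and tower configurations handled by Lemmas \ref{LemMainOneTower} and \ref{LemMainManyLoops}. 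Invoking those lemmas on the resulting graph contradicts $E \in \{K-1,K\}$ and completes the proof. I expect the bookkeeping of this last step to be the main obstacle: one must verify that the spreading induced by the bispecial moves genuinely creates new colored or uncolored vertices---rather than merely permuting colors among the existing loop vertices---and that it does so uniformly enough across the many loops to force $\#\ERGODICsp \leq K-2$.
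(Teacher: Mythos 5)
Your architecture matches the paper's: rule out $E=K$ and $E=K-1$, use the vertex count to force (almost) all colored vertices into two-vertex loops, accelerate via Corollary \ref{CorAccelLoops}, and funnel into Lemmas \ref{LemMainOneTower} and \ref{LemMainManyLoops}. The opening count is correct: $E=K$ forces $\#\PRECOLOR(\NIL)=0$ and $\#\PRECOLOR(\nu)=2$ for every $\nu$, though you should say ``loops joined in a cycle'' rather than ``disjoint union'' --- strong connectivity of the special Rauzy graph is what makes the loops \emph{consecutive}, and hypothesis (ii) of Lemma \ref{LemMainManyLoops} requires exactly that. Granting it, the $E=K$ case follows from Lemma \ref{LemMainManyLoops} immediately, with no acceleration needed in the proposition's proof itself (the acceleration happens inside that lemma).

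The genuine gap is where you flagged it: the $E=K-1$ bookkeeping, which is the substance of the paper's proof and is not a routine verification. Your dichotomy ``each move either enlarges some $\PRECOLOR(\nu)$ or produces an uncolored vertex'' is too weak to conclude: enlarging a color class to $4$ vertices, or creating one or two uncolored vertices, triggers neither clause of Corollary \ref{CorDamronsStar} (which needs $\#\PRECOLOR(\nu)>4$ or $\#\PRECOLOR(\NIL)\geq 3$), nor does it by itself recreate the hypotheses of the two special-case lemmas. The paper closes this with two counting arguments absent from your sketch. First, if a colored loop undergoes an RBS or WBS move, Corollary \ref{CorBddLoopCount} gives $\#\PRECOLOR''(\nu_{k_0})\geq 4$; the value $5$ finishes via Corollary \ref{CorDamronsStar}, while the borderline value $4$ occurs precisely when the loop is nested inside an outer loop $w\to u\to v\to z$ (Figure \ref{FigNested}), after which the count $2(K-2)\leq \sum_{\nu\neq\nu_{k_0}}\#\PRECOLOR''(\nu)\leq 2K-4$ (equation \eqref{EqMain1}) forces every remaining measure onto its own loop, yielding $K-2$ consecutive colored loops so that Lemma \ref{LemMainManyLoops} applies. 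Second, if only SBS moves hit colored loops, each resulting tower consumes four same-colored vertices, and the count $2(K-1-M)\leq 2K-4M$ caps the number of towers at $M=1$; only this identification of $\BASEGRAPH'''\equiv\BASEGRAPHTOWER$ licenses Lemma \ref{LemMainOneTower}. Without these two steps, your final sentence asserts rather than verifies that the accelerated graph satisfies the hypotheses of the lemmas you invoke --- and it is exactly in establishing those hypotheses (not merely ``spreading color somewhere'') that the proof lives; note also that $K\geq 4$ is needed there, both for $K-3\geq 1$ forced loops and for the distinct-loop requirement in Lemma \ref{LemMainOneTower}.
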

	      	  
	  	\subsection{Binary Extension Condition: Special Cases}

		Given $K$, consider a special (unweighted) graph $\BASEGRAPHTOWER$ defined by the conditions:
			\begin{itemize}
				\item there are exactly $K-1$ bispecial edges,
				\item all $K-1$ bispecial edges are in loops as in Figure \ref{FigLoop1},
				\item there is one loop $u' \to v'$ and vertices $u'',v''$ so that
						these four vertices form a ``tower'' resulting from a SBS move on loop $u'\to v'$
						as in Figure \ref{FigLoop2d}.
			\end{itemize}
		The graph $\BASEGRAPHTOWER$ is represented in Figure \ref{FigOneTower}.
	      \begin{figure}[t]
		  \begin{center}
			\includegraphics[width = .7\textwidth]{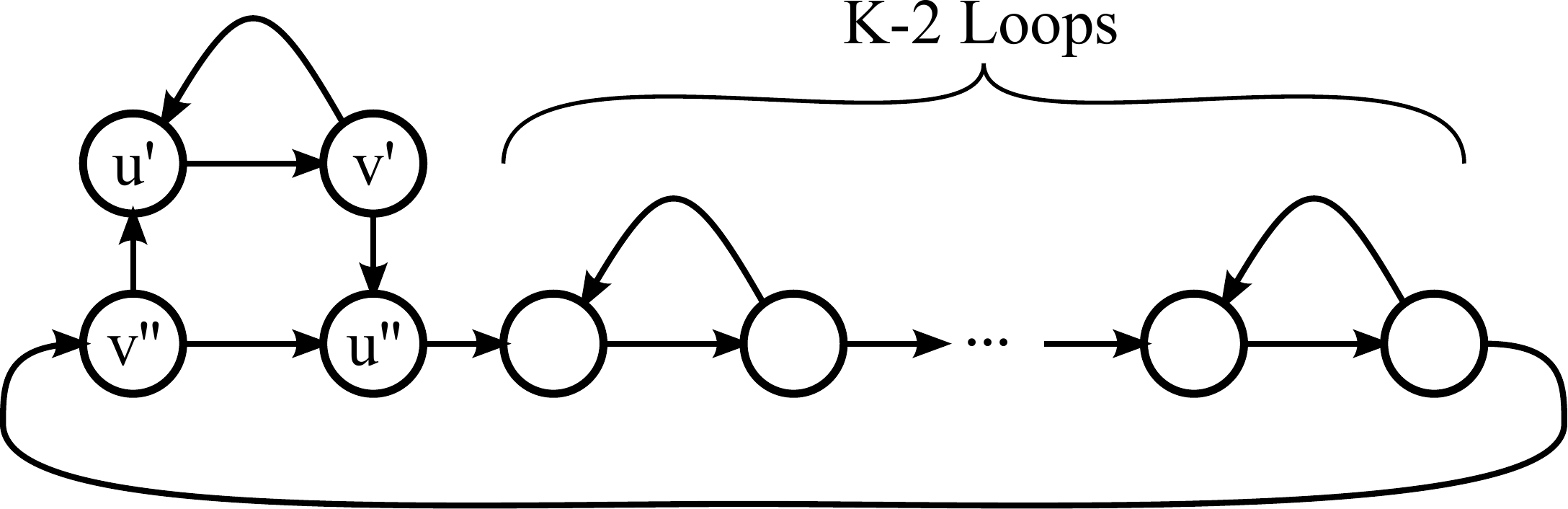}
		  \end{center}
		  \caption{The graph $\BASEGRAPHTOWER$.}\label{FigOneTower}
	      \end{figure}
		In the proof of the next result,
		    we will require that the base vertices $u'',v''\in \BASEGRAPHTOWER$
		    connect to distinct loops outside of the tower $u',v',u'',v''$.
		This occurs only for $K\geq 4$.
		\begin{lem}\label{LemMainOneTower}
			If $\SPACE$ satisfies equation \eqref{EqFixedKComp}
				with $\BASEGRAPH$ and $K\geq 4$ from Assumption \ref{ASSUMPTION2}
				where $\BASEGRAPH \equiv \BASEGRAPHTOWER$, then $\#\ERGODICsp \leq K-2$.
		\end{lem}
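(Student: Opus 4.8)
The plan is to argue by contradiction: since $\#\ERGODICsp\le K-1$ always holds by Corollary \ref{CorQBd}, I assume $\#\ERGODICsp=K-1$ and derive $\#\ERGODICsp\le K-2$. The graph $\BASEGRAPHTOWER$ has exactly $K-1$ bispecial edges, all of which are loop edges. By Corollary \ref{CorQBd} every $\nu\in\ERGODICsp$ has a bispecial edge in $\PRECOLOR(\nu)$, and since distinct colours have disjoint preimages, the $K-1$ colours and the $K-1$ bispecial edges are in bijection. Labelling the loops $\{u_i,v_i\}$ with $u_i$ left special and $v_i$ right special so that $u_i\to v_i$ is the edge of colour $\nu_i$, membership of this edge in $\PRECOLOR(\nu_i)$ gives $\COLOR(u_i)=\COLOR(v_i)=\nu_i$; write $\nu_1$ for the colour of the tower loop $\{u',v'\}$. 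In particular the colours $\nu_i$ are pairwise distinct.

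First I would determine the colours of the two base vertices $u'',v''$. Because $K\ge 4$, the hypothesis guarantees that the unique in-edge of $v''$ comes from the right special vertex $v_a$ of one loop $L_a$, and the unique out-edge of $u''$ goes to the left special vertex $u_b$ of a \emph{distinct} loop $L_b$. If $\COLOR(v'')=\nu$, then Proposition \ref{PropColorCycles}(ii) applied to the right special vertex $v''$ forces $\nu=\COLOR(v_a)=\nu_a$ and, as the only out-neighbours of $v''$ are $u'$ (coloured $\nu_1\ne\nu_a$) and $u''$, forces $\COLOR(u'')=\nu_a$; symmetrically Proposition \ref{PropColorCycles}(iii) applied to $u''$ gives $\COLOR(u'')=\nu_b$ and $\COLOR(v'')=\nu_b$. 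Hence colouring either base vertex yields $\nu_a=\nu_b$, which is impossible since $L_a\ne L_b$. Therefore $u''$ and $v''$ are both uncoloured, so $\#\PRECOLOR(\NIL)=2$ and every colour class is the minimal $\PRECOLOR(\nu_i)=\{u_i,v_i\}$; by the loop lemma preceding Corollary \ref{CorBddLoopCount}, each loop must then wind to infinity.

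The main obstacle is the surviving case in which all $K-1$ loops are minimal classes, each winding to infinity. To break it I would accelerate the tower loop using Corollary \ref{CorAccelLoops}, which preserves $\COLOR(u')=\COLOR(v')=\nu_1$, until it is poised to undergo its next genuine bispecial move with $0\in\LOOPTIMES_{u'\to v'}$. If that move is the loop-removing regular move of Figure \ref{FigLoop2b}, then Lemma \ref{LemColorMoves}(iii) forces $\nu_1$ onto one of the base vertices, and Proposition \ref{PropColorCycles} propagates it into $L_a$ or $L_b$, contradicting $\nu_a\ne\nu_b$. The genuinely hard case is when the move is the strong bispecial move of Figure \ref{FigLoop2d}: here the constant-growth hypothesis is essential, for the vanishing second difference of $\COMPLEXITY$ equates the numbers of strong and weak bispecial words, so this strong move is accompanied by a weak move collapsing some loop $L_j$, and Lemma \ref{LemColorMoves}(iv) then spreads $\nu_j$ onto a neighbouring loop to produce another clash. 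Making this precise—tracking the balancing weak move through the simultaneous-move estimates of Remark \ref{RmkNasty} and checking that the collapsed loop's neighbours carry conflicting colours—is the crux of the argument; equivalently, one recognises $\BASEGRAPHTOWER$ as a single strong bispecial move away from a graph of $K$ consecutive loops and applies Lemma \ref{LemMainManyLoops} directly.
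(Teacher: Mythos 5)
Your first half is essentially sound and runs parallel to the paper's own argument: under the contradiction hypothesis $E=K-1$ you force the base vertices $u'',v''$ to be uncolored and every color class to be a minimal loop pair $\{u_i,v_i\}$ (the paper reaches the same configuration without assuming $E=K-1$, disposing of a colored base vertex via $\#\PRECOLOR(\nu_0)\geq 5$ and of an uncolored loop via $\#\PRECOLOR(\NIL)\geq 4$, both through Corollary \ref{CorDamronsStar}). The genuine gap is in the endgame, and you flag it yourself. You accelerate only the tower loop, but the other loops can undergo their own moves before the tower loop is poised; the construction before Corollary \ref{CorAccelLoops} deliberately takes $n'''$ as the \emph{minimum} over all loops, so that $\BASEGRAPH''\equiv\BASEGRAPH$ and the first genuine loop change is captured --- and that changing loop need not be the tower loop. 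This choice is what lets the paper sidestep your ``genuinely hard'' SBS case entirely: since every special Rauzy graph has exactly $2K$ vertices (binary extension condition via Corollary \ref{Cor2ECPropogates} together with constant growth), an SBS move at one loop must be \emph{simultaneously} balanced by a WBS move at another loop, so at the transition from $\BASEGRAPH''$ to $\BASEGRAPH'''$ \emph{some} loop always undergoes an RBS or WBS change. Corollary \ref{CorBddLoopCount} applied to that loop forces its color onto its neighbors in $\BASEGRAPH''$, which either hits the uncolored vertices $u'',v''$ (contradiction) or colors two adjacent loops by one measure, giving $\#\PRECOLOR(\nu_1)\geq 6$ and $E\leq K-2$ by Corollary \ref{CorDamronsStar}.

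By contrast, your treatment of the SBS case is not a proof: the route through Lemma \ref{LemColorMoves}(iv) and Remark \ref{RmkNasty} is left as an admitted sketch, and your ``equivalent'' shortcut --- recognizing $\BASEGRAPHTOWER$ as one strong move away from $K$ consecutive loops and invoking Lemma \ref{LemMainManyLoops} --- fails twice over. It is circular, because the paper's proof of Lemma \ref{LemMainManyLoops} cites Lemma \ref{LemMainOneTower} (cases (B.2) and (D) conclude $\BASEGRAPH'''\equiv\BASEGRAPHTOWER$ and then apply the present lemma). And it is logically backwards: bispecial moves cannot be run in reverse, and Assumption \ref{ASSUMPTION2} hands you $\BASEGRAPHTOWER$ along $\JJJ$, not a $K$-loop cycle at earlier times; indeed a lone SBS move from a $K$-loop cycle is impossible under constant growth, since it would change the vertex count from $2K-2$ to $2K$ (it must be paired with a simultaneous WBS, which is exactly the balancing phenomenon you gestured at). The clean repair is the paper's: accelerate all $K-1$ loops at once, and apply Corollary \ref{CorBddLoopCount} to whichever loop performs the RBS or WBS change --- in your SBS scenario this is the WBS-collapsed loop $L_j$, whose bounded winding along $\JJJ''$ immediately spreads $\nu_j$ to its neighbors and contradicts the minimality of the classes, with no need for the simultaneous-move estimates of Remark \ref{RmkNasty}.
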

		
		\begin{proof}
		Note that Corollary \ref{Cor2ECPropogates} implies that the language satisfies the
			    binary extension condition and so every special Rauzy graph $\SPRGRAPH(n)$ for sufficiently
			    large $n$ has exactly $2K$ vertices.
			Let $E = \#\ERGODICsp$.
			For $\JJJ$, accelerate to $\JJJ''$ with $\BASEGRAPH''$ as discussed
				before Corollary \ref{CorAccelLoops} on the $K-1$ loops.
			Because there are no bispecial edges except these loops, then necessarily
				$\BASEGRAPH'' \equiv \BASEGRAPH \equiv \BASEGRAPHTOWER$.
			If either of the base vertices $u''$ or $v''$ of the tower are colored by some
				$\nu_0\in\ERGODICsp$ then we may see by Proposition \ref{PropColorCycles}
				that there are at least two loops in the graph that are colored by the same measure.
			Because $\#\PRECOLOR(\nu_0)\geq 5$, $E \leq K-2$ by Corollary \ref{CorDamronsStar}.
			We must have then that $\COLOR(u'') = \COLOR(v'') = \NIL$ and if
				any loop is also uncolored, we have $\#\PRECOLOR(\NIL) \geq 4$ and again
			      Corollary \ref{CorDamronsStar} yields $E\leq K-2$.
			
			So assume that each loop is colored and the extra two vertices $u''$ and $v''$ are not.
			Consider the next move from $\BASEGRAPH'' \equiv \BASEGRAPHTOWER$ to $\BASEGRAPH'''$
				also mentioned before Corollary \ref{CorAccelLoops}.
			At least one loop must undergo an RBS or WBS change, as at least one loop
				must undergo one of the three bispecial changes in Figure \ref{FigLoop2};
					that is, not the move in Figure \ref{FigLoop2c} that preserves the loop.
			If a loop undergoes a SBS change then another loop must undergo a WBS change to preserve
			    the total number of vertices.
			
			Fix a loop $u\to v$ that undergoes an RBS or WBS change
					and write $\nu_1$ for he measure with $u,v\in\PRECOLOR(\nu_1)$.
				For this loop, $\LOOPTIMES(n''') = \{0,1\}$ or $\{1\}$ for each $n'''\in \JJJ''$.
			By Corollary \ref{CorBddLoopCount}, the vertices in $\BASEGRAPH''$ adjacent to
				the loop $u\to v$ must share its color.
			If this loop is either $u'\to v'$ (the top of the ``tower'') or adjacent to $u''$ or $v''$ (the base vertices of the
				``tower''),
				we have a contradiction as either $u''$ or $v''$ must belong to $\PRECOLOR(\nu_1)$.
			Otherwise, this loop then shares its color with its two neighboring loops, and so
				$\#\PRECOLOR(\nu_1) \geq 6$ and so $E\leq K-2$ by Corollary \ref{CorDamronsStar}.
		\end{proof}

		\begin{lem}\label{LemMainManyLoops}
			Suppose $\SPACE$ satisfies equation \eqref{EqFixedKComp} with $\BASEGRAPH$
				and $K\geq 4$ from
				Assumption \ref{ASSUMPTION2}
			If $\BASEGRAPH$ contains at least $K-2$ consecutive loops that are colored,
				meaning that there are vertices $u^{(k)},v^{(k)}\in \BASEGRAPH$,
					$1\leq k \leq K-2$ so that
					\begin{enumerate}\renewcommand{\theenumi}{\roman{enumi}}
						\item for each $1\leq k \leq K-2$, $u^{(k)} \to v^{(k)}$
							forms a loop as in Figure \ref{FigLoop1},
						\item for each $1 \leq k < K-2$, the edge $v^{(k)}\to u^{(k+1)}$ exists in $\BASEGRAPH$ and connects the $k^{th}$ loop to the $(k+1)^{st}$ and
						\item for each $1\leq k \leq K-2$, $\COLOR(u^{(k)}) = \COLOR(v^{(k)}) \neq \NIL$,
					\end{enumerate}
				then $\#\ERGODICsp\leq K-2$.
		\end{lem}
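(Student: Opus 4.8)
The plan is to verify one of the hypotheses of Corollary \ref{CorDamronsStar}: I will exhibit either a color $\nu$ with $\#\PRECOLOR(\nu) > 4$ or else show $\#\PRECOLOR(\NIL) \geq 3$, and either conclusion gives $\#\ERGODICsp \leq K-2$. By Corollary \ref{Cor2ECPropogates} the language satisfies the binary extension condition, so $K_\ell = K_r = K$ and $\BASEGRAPH$ has exactly $2K$ vertices. The $K-2$ loops use $2K-4$ of these, leaving exactly four: two left special vertices $a_1,a_2$ and two right special vertices $b_1,b_2$. Writing $\nu_k = \COLOR(u^{(k)}) = \COLOR(v^{(k)})$ for the color of the $k$th loop, a first reduction is purely combinatorial: since each loop is colored, the only left special vertices that can lie in $\PRECOLOR(\NIL)$ are $a_1,a_2$. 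If both are uncolored then $\#\PRECOLOR_\ell(\NIL) \geq 2$, whence $\#\ERGODICsp \leq \sum_{\nu} \#\PRECOLOR_\ell(\nu) = K - \#\PRECOLOR_\ell(\NIL) \leq K-2$ exactly as in the proof of Corollary \ref{CorDamronsStar}; the same holds if $b_1,b_2$ are both uncolored. So I may assume that at least one of $a_1,a_2$ and at least one of $b_1,b_2$ is colored.

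Next I would accelerate the $K-2$ loops as in the discussion preceding Corollary \ref{CorAccelLoops}, obtaining $\BASEGRAPH''$ on which all loop colors persist (Corollary \ref{CorAccelLoops}). The chain structure survives the acceleration because each connecting edge $v^{(k)} \to u^{(k+1)}$ runs from a right special to a left special vertex and therefore only increases in weight, never becoming a bispecial edge. Taking the acceleration to be the minimum over the loops, the transition $\BASEGRAPH'' \to \BASEGRAPH'''$ forces at least one (necessarily colored) loop to undergo a bispecial move other than the loop-preserving regular move of Figure \ref{FigLoop2c}; after passing to a subsequence I may assume the move type is constant along the accelerated sequence.

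The core step is the spreading of color. A loop that undergoes a weak bispecial move or a loop-removing regular bispecial move has $\max \LOOPTIMES \leq 1$ along the accelerated sequence, so its winding is bounded and Corollary \ref{CorBddLoopCount} forces both neighbors of the loop to carry its color. If the loop is interior, its neighbors are $v^{(k^*-1)}$ and $u^{(k^*+1)}$, forcing $\nu_{k^*-1} = \nu_{k^*} = \nu_{k^*+1}$ and collecting three whole loops, hence six vertices, into $\PRECOLOR(\nu_{k^*})$. If it is an endpoint loop, one neighbor is the adjacent loop (again merging two loops) and the other is one of the four extra vertices, giving five vertices. In either case $\#\PRECOLOR(\nu_{k^*}) \geq 5$, and Corollary \ref{CorDamronsStar}(i) completes the argument.

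The main obstacle is the alternative that the critical loop undergoes a \emph{strong} bispecial move (Figure \ref{FigLoop2d}), which preserves the loop and does not by itself yield bounded winding. I would handle this through conservation of the vertex count: the graph always carries $2K$ vertices, a strong move adds two and a weak move removes two, so any strong move at a given step is balanced by a weak move elsewhere; moreover re-accelerating the same loop strictly decreases its $\max \LOOPTIMES$ at each strong move, so after finitely many steps it must be removed by a weak or loop-removing move. The delicate part, and where I expect the real work to lie, is ensuring that the vertex freed by a balancing weak move belongs to a \emph{colored loop} (so that its color spreads into a class of size at least five, as above) rather than to one of the four extra vertices; tracking the endpoint loops, the possibility that the chain closes into a cycle, and the local pictures of Lemma \ref{LemColorMoves} should reduce every strong-move configuration to the situation already settled, in a manner parallel to the tower analysis of Lemma \ref{LemMainOneTower}.
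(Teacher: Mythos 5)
Your first half is sound and coincides with the paper's own mechanism: accelerate the $K-2$ loops as before Corollary \ref{CorAccelLoops} so the loop colors persist, note that at the transition $\BASEGRAPH''\to\BASEGRAPH'''$ some tracked loop must undergo a move other than the loop-preserving RBS of Figure \ref{FigLoop2c}, and in the WBS/loop-removing-RBS case use the bounded winding ($\LOOPTIMES(n''') = \{1\}$ or $\{0,1\}$) with Corollary \ref{CorBddLoopCount} to spread the loop's color to both neighbors, giving $\#\PRECOLOR(\nu_{k^*})\geq 5$ and the conclusion via Corollary \ref{CorDamronsStar}. Your preliminary reduction (at least one of $a_1,a_2$ and one of $b_1,b_2$ colored) is also correct, though the paper does not need it.

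The genuine gap is the SBS case, which you explicitly defer, and the mechanism you propose for it does not work. You argue termination: ``re-accelerating the same loop strictly decreases its $\max\LOOPTIMES$ at each strong move, so after finitely many steps it must be removed.'' But $\LOOPTIMES$ is an $n$-indexed family of sets, and each acceleration passes to a fresh subsequence along which $\max\LOOPTIMES(n)$ is typically unbounded --- indeed the first lemma of Section \ref{SsecColoringLoops} shows $\max\LOOPTIMES(n)\to\infty$ whenever $\#\PRECOLOR(\nu)=2$, which is exactly the situation you are fighting. There is no single integer that decreases across successive base graphs, and a colored loop can in principle undergo an SBS move at every acceleration forever. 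The paper resolves this case not by termination but by a structural enumeration over the maximal number $\KKK\in\{K-2,K-1,K\}$ of consecutive colored loops: vertex-count conservation (which you correctly identify) shows at most two simultaneous SBS moves can occur, and it pins down the balancing WBS moves to the four extra vertices; the resulting graphs are then either dispatched by the counting of Corollary \ref{CorDamronsStar} ($\#\PRECOLOR(\nu_0)\geq 5$, $\#\PRECOLOR(\NIL)\geq 4$, or $2(E-1)\leq 2K-6$), or shown to be $\equiv\BASEGRAPHTOWER$, to which the separately proved Lemma \ref{LemMainOneTower} applies. In particular the delicate subcase you flag --- the two leftover vertices being colored by a new measure --- is handled by observing they must then form a loop, so the graph is exactly $\BASEGRAPHTOWER$; the subcase where they are uncolored or share a loop's color is handled by the count $2(E-1)\leq 2K-6$. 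This enumeration is the substantive content of the proof, not a routine completion, so as written your proposal establishes the lemma only in the WBS/RBS case.
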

		
		\begin{proof}
			Let $E = \#\ERGODICsp$.
			Recalling Corollary \ref{CorDamronsStar}, we will show by cases that either:
			      there is a measure $\nu_0$ so that at least 5 vertices are colored by $\nu_0$ or 
				there are at least 3 uncolored vertices.

			We will consider a number $\KKK\geq K-2$ of consecutive colored loops.
			If $\KKK = K$, then $\BASEGRAPH$ is just a cycle of $K$ colored loops.
			In this case, consider
				$\JJJ'',\BASEGRAPH'', \JJJ''', \BASEGRAPH'''$
			from the discussion before Corollary \ref{CorAccelLoops}.
			Just as in the previous proof,
				$\BASEGRAPH''\equiv \BASEGRAPH$ and
				all loops are still colored by relation $\COLOR''= \COLOR$ on $\BASEGRAPH''$.
			Also, at least one loop will undergo either an RBS or WBS change
				from $\BASEGRAPH''$ to $\BASEGRAPH'''$ and so
				by Corollary \ref{CorBddLoopCount},
			its color will be shared by the neighboring loops.
			Therefore if $\nu_0$ is the coloring for that loop, then
				$\#\PRECOLOR(\nu_0) \geq 6$.
				
			If $\KKK = K-1$, then either:
				\begin{itemize}
					\item[(A)] there are exactly $K-1$ loops and the remaining two
									vertices $u^{(K)},v^{(K)}$
									are connected by the edges $v^{(K-1)} \to v^{(K)}$,
										$u^{(K)}\to u^{(1)}$
									and two edges $v^{(K)}\to u^{(K)}$, or
					\item[(B)] there are $K$ loops in a cycle, but one is not colored.
							Call the vertices of this loop $u^{(K)} \to v^{(K)}$.
				\end{itemize}
				
			If (A) holds, construct $\JJJ'',\BASEGRAPH'',\JJJ''',\BASEGRAPH'''$ as before.
				Then $\BASEGRAPH'' = \BASEGRAPH$ (the only bispecial edges exist in the $K-1$ loops)
				    and
				from $\BASEGRAPH''$ to $\BASEGRAPH'''$ there must exist a loop
				colored by some $\nu_0$ that undergoes either an RBS or WBS change.
			By Corollary \ref{CorBddLoopCount}, the vertices adjacent to this loop
					must also be colored by $\nu_0$.
					Such an adjacent vertex is either an element of another loop or of
					      the set $\{u^{(K)}, v^{(K)}\}$.
					In either case, the color $\nu_0$ on these vertices implies that at least
					      two vertices on each side of the original loop are colored by
					      $\nu_0$ as well.
			Therefore again $\#\PRECOLOR''(\nu_0) \geq 6$.
			
			If (B) holds,  construct $\JJJ'',\BASEGRAPH'',\JJJ'',\BASEGRAPH'''$ as before
				but focus on all $K$ loops.
			If a colored loop in $\BASEGRAPH'' \equiv \BASEGRAPH$ undergoes an RBS or WBS change,
				then just as in the previous case $\#\PRECOLOR''(\nu_0)\geq 6$ for 
				$\nu_0$ coloring that loop.
			If not, then either
				\begin{itemize}
					\item[(B.1)] no colored loop changes from $\BASEGRAPH''$ to $\BASEGRAPH'''$ and
								so the loop $u^{(K)} \to v^{(K)}$ undergoes an RBS change, or
					\item[(B.2)] exactly one colored loop undergoes an SBS change and the loop
								$u^{(K)}\to v^{(K)}$ undergoes a WBS change.
				\end{itemize}
			If (B.1) occurs, then $\BASEGRAPH'''$ is of the form in (A).
				We apply that
					argument to $\JJJ'''$ and $\BASEGRAPH'''$ to
					conclude that $E\leq K-2$.				
			If (B.2) occurs, then $\BASEGRAPH''' \equiv \BASEGRAPHTOWER$ and
				    so $E\leq K-2$ by Lemma \ref{LemMainOneTower}.
						
			For the last case, suppose $\KKK = K-2$.
			Find $\JJJ'',\BASEGRAPH'',\JJJ''',\BASEGRAPH'''$ by considering
				only the $K-2$ loops.
			That is, the $K-2$ loops persist to $\BASEGRAPH''$ but a bispecial
			  change will affect at least one of them from $\BASEGRAPH''$ to $\BASEGRAPH'''$.
			Again, if a colored loop experiences an RBS or WBS change
					from $\BASEGRAPH''$ to $\BASEGRAPH'''$,
					then $\#\ERGODICsp\leq K-2$.
			If not, then either:
					\begin{itemize}
						\item[(C)] two colored loops undergo SBS moves, or
						\item[(D)] one colored loop undergoes an SBS move.
					\end{itemize}
			Here, we have used that three SBS moves are impossible by a counting argument. In either case, the remaining colored loops do not change.
			If (C) occurs, then the remaining four vertices in $\BASEGRAPH''$
				not in a colored loop will form two bispecial edges
				that will undergo WBS moves from $\BASEGRAPH''$ to $\BASEGRAPH'''$.
			Therefore, $\BASEGRAPH'''$ will have exactly $K-2$ loops and two ``towers''
				from SBS moves, each tower composed of a loop and two base vertices.
			If any of the four base vertices in $\BASEGRAPH'''$ is colored by some measure $\nu_0$,
			      then by Proposition \ref{PropColorCycles} it follows that $\#\PRECOLOR'''(\nu_0) \geq 5$. This is because if the base vertices of a tower are colored, then all four vertices in the tower have the same color.
			If none of the base vertices are colored, then $\#\PRECOLOR'''(\NIL) \geq 4$.
						
			If (D) occurs, then two of the four remaining vertices in $\BASEGRAPH''$
				will belong to a bispecial edge that will undergo a WBS move.
			Therefore, $\BASEGRAPH'''$ will contain a loop tower and at least $K-3$ loops,
				all inheriting colors from the $K-2$ loops in $\BASEGRAPH''$ by
				Lemma \ref{LemColorMoves}.
			The two extra vertices are either both colored or both not
				by considering the graph structure.
			If the two vertices are not colored or share a color with
				one of the loops, then by excluding these two vertices as well
				as the four for the tower we have that $2(E-1) \leq 2K - 6$ and again
				$E\leq K-2$.
			If the two vertices are colored by a different measure, then they must form a
				loop as in Figure \ref{FigLoop1}.
			In this case $\BASEGRAPH''' = \BASEGRAPHTOWER$ and by Lemma \ref{LemMainOneTower},
				$E\leq K-2$.
			We have concluded the proof as all cases have been exhausted.
		\end{proof}
		
		\subsection{Binary Extension Condition: Main Proof}
		
		\begin{proof}[Proof of Proposition \ref{PropMainBin}]
				
				Construct $\JJJ$ and $\BASEGRAPH$ that satisfy Assumption \ref{ASSUMPTION2} for
					$\SPACE$.
				Suppose first by contradiction that $E= K$, where $E = \#\ERGODICsp$.
				Then $\BASEGRAPH$ must be $K$ colored loops all in a cycle.
				However, by Lemma \ref{LemMainManyLoops} it must be that $E\leq K-2$ and we
					have contradicted our assumption.
					
				Now suppose by contradiction that $E = K-1$.
				Because
					$$
						2(K-1) \leq \sum_{\nu} \#\PRECOLOR(\nu) \leq 2K
					$$
				we must have
					$$
						\#\PRECOLOR(\nu_1) = \dots = \#\PRECOLOR(\nu_{K-3}) = 2 \leq 
						\#\PRECOLOR(\nu_{K-2}) \leq \#\PRECOLOR(\nu_{K-1})
					$$
				for some choice of ordering $\nu_1,\dots,\nu_{K-1}\in \ERGODICsp$.
				By focusing on the $K-3$ colored loops,
					choose $\JJJ'',\BASEGRAPH'',\JJJ''',\BASEGRAPH'''$
					as discussed before Corollary \ref{CorAccelLoops}.
				First suppose that one of the colored loops undergoes a WBS or RBS move from $\BASEGRAPH''$
					to $\BASEGRAPH'''$, then for its measure $\nu_{k_0}$ we have
					$\#\PRECOLOR''(\nu_{k_0})\geq 4$ by Corollary \ref{CorBddLoopCount}.
				Note that if $\#\PRECOLOR(\nu_{k_0}) \geq 5$, then $E \leq K-2$ by Corollary \ref{CorDamronsStar},
				      a contradiction.
					
				If we name this loop $u\to v$ with adjacent vertices $w$
					and $z$ as in Figure \ref{FigLoop1}, then $\#\PRECOLOR''(\nu_{k_0}) = 4$ if and only if
					$z\to w$ forms an ``outer'' loop that nests loop $u\to v$ as in
					Figure \ref{FigNested}.
					\begin{figure}[t]
				      \begin{center}
					    \includegraphics[width = .6\textwidth]{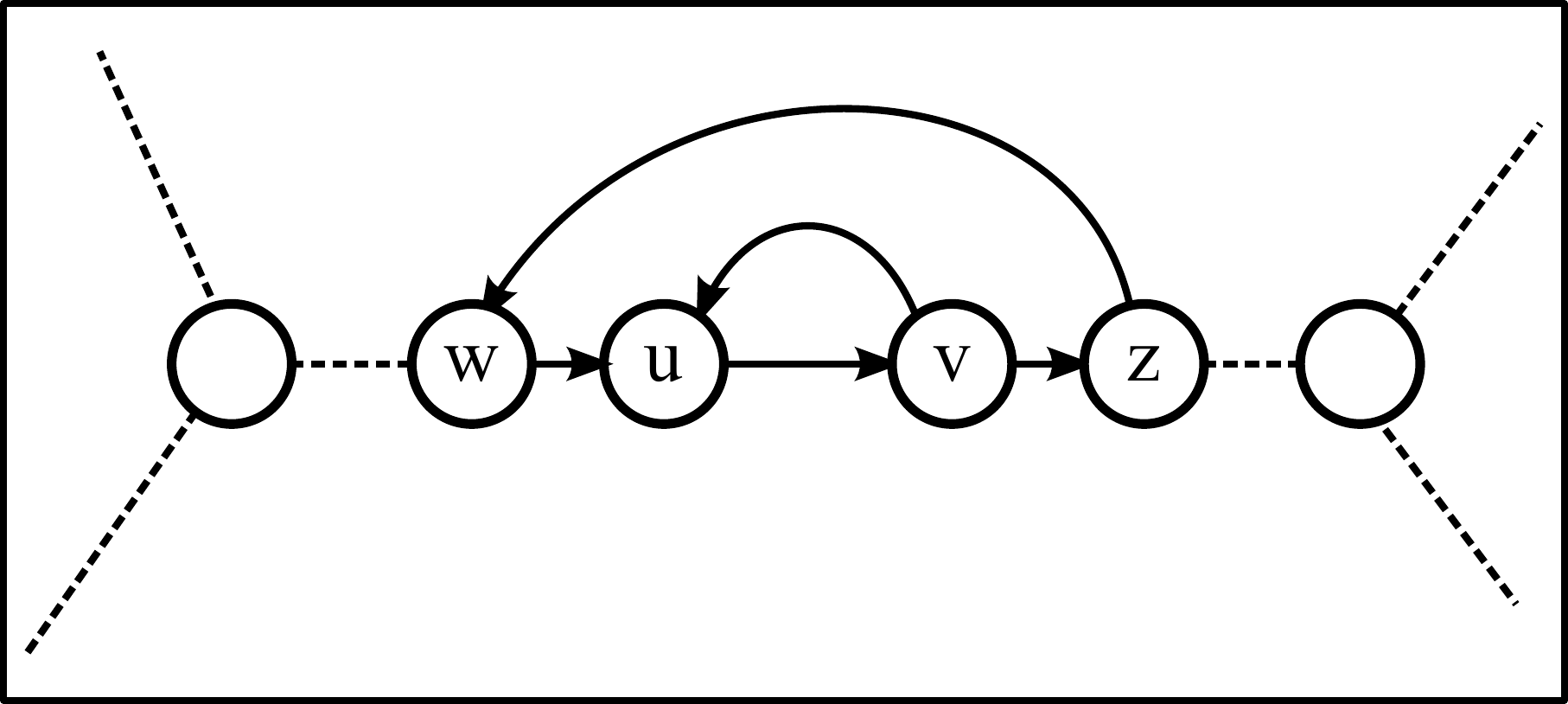}
				      \end{center}
				      \caption{Loop $u\to v$ is ``nested'' in larger
				      	loop $w\to u \to v \to z$. Note that the dotted edges incident to $w$ and $z$ do not have an orientation. This is because either $w$ is left special and $z$ is right special, or $w$ is right special and $z$ is left special.}\label{FigNested}
					\end{figure}
				Note that exactly one vertex $w$ or $z$ is left special and the other is right.
				By counting the remaining measures and using the assumption $E= K-1$,
					\begin{equation}\label{EqMain1}
						2(K-2) = 2(E-1) \leq \sum_{\nu\neq \nu_{k_0}} \#\PRECOLOR''(\nu) \leq 2K-4,
					\end{equation}
				so the remaining measures all color distinct loops in $\BASEGRAPH''$.
				In this case $\BASEGRAPH''$ is the nested loop connected in a cycle to the remaining
					(and consecutive) $K-2$
					colored loops.
				Again we have a contradiction that $E\leq K-2$ by Lemma \ref{LemMainManyLoops}.

				Now suppose that no colored loop in $\BASEGRAPH''$ will undergo an RBS or WBS change
					to $\BASEGRAPH'''$.
				Then at least one of the $K-3$ loops will undergo an SBS change while the remaining
					loops do not change.
				Each SBS change will result in a tower that must share its color
					with its corresponding loop in $\BASEGRAPH$.
				If $M$ such towers are created, then
					$$
						2(K-1-M) = 2(E-M) \leq 2K - 4M \Rightarrow M\leq 1.
					$$
				Therefore exactly one SBS tower will be created in $\BASEGRAPH'''$
				and the remaining loops will persist from $\BASEGRAPH$ to this $\BASEGRAPH'''$.
				If $\nu_{k_0}$ is the measure related this new tower, then the inequality
					\eqref{EqMain1} holds but by summing for $\PRECOLOR'''$ instead.
				We again conclude that $\BASEGRAPH''' \equiv \BASEGRAPHTOWER$ and so we
					reach our contradiction as $E\leq K-2$.
		\end{proof}

      \subsection{General Languages: Main Proof}

	  Now suppose $\SPACE$ has eventually constant complexity growth $K$ as in equation \eqref{EqFixedKComp}
		but does not satisfy the binary extension condition.
	  For $\mathfrak{s}\in \{\ell,r\}$, let
		  $$
		      \Psi_\mathfrak{s} = \lim_{n\to\infty} \psi_\mathfrak{s}(n)
		  $$
	  where $\psi_\mathfrak{s}(n) = \max\{\#\EXTENDs(w):w\in \LANGUAGE(n)\}$ from Lemma \ref{LemDecPsi}.
	  Because the language $\LANGUAGE$ does not satisfy the binary extension condition,
	      $\Psi_\mathfrak{s} >2$ for some $\mathfrak{s} \in \{\ell,r\}$.
	  By equation \eqref{EqPsiSpecBd} and Corollary \ref{CorKnBd},
	      if $\Psi_\mathfrak{s} \geq 4$ for some $\mathfrak{s}$,
	      then $\#\ERGODICsp \leq K-2$.
	  Items (I)-(III) of the following lemma are consequences of this fact;
		  the remaining statement follows from equation~\eqref{EqExtToGrowth}.
	  \begin{lem}\label{LemMainGenCases}
	  If $\SPACE$ satisfies the conditions above with $\JJJ,\BASEGRAPH$ satisfying Assumption \ref{ASSUMPTION2}
	      and $K_\mathfrak{s} \geq K-1$ for each $\mathfrak{s} \in \{\ell,r\}$, then
	      exactly one of the following must hold:
		\begin{enumerate}\renewcommand{\theenumi}{\Roman{enumi}}
		 \item $\Psi_\ell = 2$ and $\Psi_r = 3$,
		  \item $\Psi_\ell = 3$ and $\Psi_r = 2$, or
		  \item $\Psi_\ell = \Psi_r = 3$.
		\end{enumerate}
	  Furthermore, if $\Psi_\mathfrak{s} = 3$ then for all large $n$ there exists a unique $\tilde{w}\in \LANGUAGEs(n)$
		so that $\#\EXTENDs(\tilde{w}) =3$ and
		for all $w\in \LANGUAGEs(n)$,
		    $w\neq \tilde{w}$,
		$\#\EXTENDs(w) = 2$.
	  \end{lem}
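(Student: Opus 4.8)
The plan is to read off the trichotomy (I)--(III) directly from the single inequality \eqref{EqPsiSpecBd} together with the standing hypothesis $K_\mathfrak{s}\geq K-1$, and then to extract the ``furthermore'' from the exact count \eqref{EqExtToGrowth}. First I would pin down the possible values of $\Psi_\mathfrak{s}$. For all large $n$ we have $\COMPLEXITY(n+1)-\COMPLEXITY(n)=K$ by \eqref{EqFixedKComp}, so \eqref{EqPsiSpecBd} reads $\#\LANGUAGEs(n)+\psi_\mathfrak{s}(n)\leq K+2$. On $\JJJ$ we have $\#\LANGUAGEs(n)=K_\mathfrak{s}\geq K-1$, while $\psi_\mathfrak{s}(n)\to\Psi_\mathfrak{s}$ by Lemma \ref{LemDecPsi}; letting $\JJJ\ni n\to\infty$ forces $\Psi_\mathfrak{s}\leq 3$. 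Since there is always at least one $\mathfrak{s}$-special word (as $\COMPLEXITY(n+1)>\COMPLEXITY(n)$) and each such word has at least two $\mathfrak{s}$-extensions, $\psi_\mathfrak{s}(n)\geq 2$ and hence $\Psi_\mathfrak{s}\geq 2$. Thus $\Psi_\mathfrak{s}\in\{2,3\}$ for each $\mathfrak{s}$.

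Next I would rule out $\Psi_\ell=\Psi_r=2$. Indeed $\Psi_\mathfrak{s}=2$ means $\psi_\mathfrak{s}(n)\leq 2$ for all large $n$, which is exactly condition \eqref{Eq2EC}; so $\Psi_\ell=\Psi_r=2$ would give the binary extension condition of Definition \ref{Def2EC}, contrary to the standing hypothesis that $\LANGUAGE$ fails it. Hence at least one $\Psi_\mathfrak{s}=3$, and since each value lies in $\{2,3\}$ the only surviving possibilities are the three mutually exclusive cases (I), (II), (III).

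For the ``furthermore'' suppose $\Psi_\mathfrak{s}=3$. For large $n$ we then have $\psi_\mathfrak{s}(n)=3$, so there is at least one $\tilde{w}\in\LANGUAGEs(n)$ with $\#\EXTENDs(\tilde w)=3$ and every $w\in\LANGUAGEs(n)$ satisfies $\#\EXTENDs(w)\in\{2,3\}$. Plugging $\psi_\mathfrak{s}(n)=3$ into \eqref{EqPsiSpecBd} gives $\#\LANGUAGEs(n)\leq K-1$; combined with the matching lower bound $\#\LANGUAGEs(n)\geq K-1$ this yields $\#\LANGUAGEs(n)=K-1$. Granting this, I apply the exact identity \eqref{EqExtToGrowth}: writing $t$ for the number of $\mathfrak{s}$-special words with three extensions, each special word contributes $\#\EXTENDs(w)-1\in\{1,2\}$, so $\sum_{w\in\LANGUAGEs(n)}(\#\EXTENDs(w)-1)=(K-1)+t=K$ forces $t=1$. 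Thus $\tilde w$ is unique and $\#\EXTENDs(w)=2$ for every other $\mathfrak{s}$-special $w$, as claimed.

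The one delicate point, and the step I would be most careful about, is the lower bound $\#\LANGUAGEs(n)\geq K-1$ holding \emph{for all} large $n$ rather than merely along $\JJJ$ (where it is the hypothesis $K_\mathfrak{s}\geq K-1$). The key observation is that this is exactly the genuinely remaining case in the proof of Theorem \ref{ThmMain}: if instead $\#\LANGUAGEs(n)\leq K-2$ held for infinitely many $n$ for some $\mathfrak{s}$, one could build the defining subsequence of Assumption \ref{ASSUMPTION} along those lengths, obtaining $\min\{K_\ell,K_r\}\leq K-2$ and hence $\#\ERGODICsp\leq K-2$ outright by Corollary \ref{CorKnBd}. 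So the only situation in which the lemma is needed is precisely the one in which $\#\LANGUAGEs(n)=K-1$ eventually, and in that situation the counting above applies at every large $n$.
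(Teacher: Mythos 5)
Your proposal is correct and follows essentially the same route as the paper, whose entire proof is the paragraph preceding the lemma: the trichotomy is obtained from \eqref{EqPsiSpecBd} (which, with $\COMPLEXITY(n+1)-\COMPLEXITY(n)=K$, caps $\#\LANGUAGEs(n)+\psi_\mathfrak{s}(n)$ at $K+2$) together with the failure of the binary extension condition, and the ``furthermore'' is asserted to ``follow from equation \eqref{EqExtToGrowth}'' with no further detail. Where you diverge is in making the elided step explicit: the count $\#\LANGUAGEs(n)+t=K$ forces $t=1$ only if $\#\LANGUAGEs(n)\geq K-1$ holds \emph{for all} large $n$, whereas the hypothesis $K_\mathfrak{s}\geq K-1$ controls only lengths in $\JJJ$, and $\#\LANGUAGEs(n)$ is not monotone in $n$ (the injection in Lemma \ref{LemDecPsi} makes individual extension counts non-increasing under extension, but the number of $\mathfrak{s}$-special words can fluctuate at bispecial lengths). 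Your repair --- if $\#\LANGUAGEs(n)\leq K-2$ for infinitely many $n$, rebuild the subsequence of Assumption \ref{ASSUMPTION} along those lengths and conclude $\#\ERGODICsp\leq K-2$ outright from Corollary \ref{CorKnBd} --- is sound, and it is precisely the device the paper itself uses one sentence earlier to dispose of $\Psi_\mathfrak{s}\geq 4$. Be aware, though, that as you yourself acknowledge, this repairs the lemma's \emph{use} in the proof of Theorem \ref{ThmMain} rather than deriving the literal ``for all large $n$'' conclusion from the stated hypotheses; in that respect your write-up is more scrupulous than the paper's one-line citation of \eqref{EqExtToGrowth}, which carries the same gap silently.
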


      \begin{proof}[Proof of Theorem \ref{ThmMain}]

	    If $\SPACE$ has language $\LANGUAGE$ that satisfies the binary extension condition,
		    then Proposition \ref{PropMainBin} implies that $E \leq K-2$, where $E = \#\ERGODICsp$.
	    Otherwise, one of the cases in Lemma \ref{LemMainGenCases} holds.
	    Note in any of these cases, $K_\mathfrak{s} = K-1$ for some $\mathfrak{s} \in \{\ell,r\}$,
		    so $E = K$ is not possible by Corollary \ref{CorKnBd}.
	    We will therefore assume for a contradiction that $E = K-1$, and argue by cases.
	    Construct $\JJJ$, $\BASEGRAPH$, $K_\ell$ and $K_r$ that satisfy Assumption \ref{ASSUMPTION2}.
	    For each $\mathfrak{s}\in \{\ell,r\}$,
		let $\PRECOLOR_\mathfrak{s}$ denote the elements of
		$\PRECOLOR$ that are $\mathfrak{s}$-special.
 
	    If case (I) holds, then $K_r = K-1$ and $K_\ell = K$.
	    So $\#\PRECOLOR_r(\nu) = 1$ for all $\nu \in \ERGODICsp$ and
		  $\PRECOLOR_r(\NIL) = \emptyset$.
	    Furthermore, either $\#\PRECOLOR_\ell(\nu) = 1$ for all $K-1$ measures $\nu$ and $\#\PRECOLOR_\ell(\NIL)=1$,
		  or $\#\PRECOLOR_\ell(\nu)=1$ for $K-2$ measures $\nu$ and $\#\PRECOLOR_\ell(\nu)=2$ for one measure $\nu$.
	    In either case, if $\nu_0$ is the measure such that $\PRECOLOR_r(\nu_0)$ contains the vertex of out-degree three,
		  there must be at least $K-3$ measures $\nu \neq \nu_0$ such that $\#\PRECOLOR_r(\nu) = \#\PRECOLOR_\ell(\nu)=1$.

	    Construct $\JJJ''$, $\BASEGRAPH''$, $\JJJ'''$, $\BASEGRAPH'''$ as before Corollary \ref{CorAccelLoops}
		so that the $K-3$ binary loops are preserved from $\BASEGRAPH$ to $\BASEGRAPH''$ and at least one changes
		from $\BASEGRAPH''$ to $\BASEGRAPH'''$.
	    Let $\COLOR''$ and $\COLOR'''$ be the coloring functions on $\BASEGRAPH''$ and $\BASEGRAPH'''$ respectively.
	    By Corollary \ref{CorAccelLoops}, $\COLOR'' = \COLOR$ on each binary loop.
	    We claim that at least one binary loop will undergo an RBS or WBS change from $\BASEGRAPH''$ to $\BASEGRAPH'''$.
	    Otherwise, at least one binary loop must undergo an SBS change from $\BASEGRAPH''$ to $\BASEGRAPH'''$.
	    However, then the created tower has two right special vertices of the same color,
		    and this contradicts $\#\PRECOLOR_r(\nu)=1$ for all $\nu \in \ERGODICsp$.
	    Therefore a binary loop $u\to v$ undergoes an RBS or WBS
		    change and must share its color with the next vertex $a$ on the path from $v$ leading away from $u$.
	    Since $\#\PRECOLOR_r(\nu)=1$ for all $\nu \in \ERGODICsp$,
		  $a$ cannot be right special, so it is left special.
	    Following the path from $a$,
		  we must eventually hit a right special vertex,
		  and each left special vertex shares the same color as the loop $u\to v$. So the first right special vertex we hit must be $v$, and this contradicts minimality.

	    The case (II) may be handled as above by interchanging the roles of ``left special'' and ``right special.''
	    If (III) holds, then $K_\ell = K_r = K-1$ and necessarily $\#\PRECOLOR(\nu) = 2$ for all $\nu$.
	    A similar argument to the above works here as well.
	    Namely, there must be at least $K-3$ colored binary loops,
		    and if we wait for one of them to change, it cannot perform an SBS move.
	    If it performs an RBS or WBS move,
		    then the right special vertex in the loop must share its color with the vertex $a$ on the path away from the left special vertex.
	    But since $\#\PRECOLOR(\nu)=2$ for all $\nu$, we immediately contradict minimality.
      \end{proof}

	\section{Further Work}\label{SecFuture}

	For large $K$, the statement ``$\#\ERGODICsp = K-2$'' for $\SPACE$ satisfying \eqref{EqFixedKComp}
	    already seems problematic.
	We plan to expand the results presented here to explore improvements to Theorem \ref{ThmMain}.

	As discussed in the introduction,
		an interesting class of shifts satisfying \eqref{EqFixedKComp}
		are generated by interval exchange transformations.
	By \cite[Lemma 8]{cFerenZam2008} these shifts satisfy the binary extension condition.
	Furthermore, they enjoy a \emph{regular bispecial condition}, meaning
	    the bispecial words of length $n$ are regular bispecial for all large $n$.
	Using the additional assumption that $\SPACE$ satisfies the regular bispecial condition,
	      we have already achieved a bound $\#\ERGODICsp \leq C \cdot K$ for a constant $C< 1$.
	These results will be produced in a future paper.
	
	We aim to sharpen the bounds for shifts with either the binary extension condition
	      or regular bispecial condition and compare these bounds with those for interval
	      exchange transformations, $\#\ERGODICsp\leq(K+1)/2$.
      \bibliographystyle{abbrv}
      \bibliography{dfbibfile}
\end{document}